\documentclass[11pt,a4paper,english,reqno]{amsart}
\usepackage{verbatim}
\usepackage{amsthm}
\usepackage{amscd}
\usepackage{amstext}
\usepackage{amssymb}
\usepackage[all]{xy}

\usepackage[dvips]{graphicx}
\usepackage[colorlinks=true,linkcolor=red,citecolor=blue]{hyperref}
\usepackage{tabu}
\usepackage{tikz}
\usepackage{mathdots}

\usepackage{scalerel}

\makeatletter
\@namedef{subjclassname@2010}{%
	\textup{2010} Mathematics Subject Classification}
\makeatother

\numberwithin{equation}{section}
\numberwithin{figure}{section}
\theoremstyle{plain}

\newtheorem{thm-nonum}{Theorem}

\theoremstyle{definition}

\theoremstyle{remark}

\usepackage{amscd, amssymb,}
\xyoption{2cell}
\UseTwocells

\newcommand{\xyR}[1]{%
	\xydef@\xymatrixrowsep@{#1}}

\newcommand{\xyC}[1]{%
	\xydef@\xymatrixcolsep@{#1}}

\newcommand{\rt}{\rightarrow}
\newcommand{\lrt}{\longrightarrow}

\newcommand{\st}{\stackrel}

\newcommand{\la}{\lambda}
\newcommand{\La}{\Lambda}

\newcommand{\CA}{\mathcal{A} }
\newcommand{\CC}{\mathcal{C} }

\newcommand{\CF}{\mathcal{F} }

\newcommand{\CN}{\mathcal{N} }
\newcommand{\CP}{\mathcal{P} }
\newcommand{\CQ}{\mathcal{Q} }

\newcommand{\CS}{\mathcal{S} }

\newcommand{\CV}{\mathcal{V}}
\newcommand{\CU}{\mathcal{U}}

\newcommand{\CB}{\mathcal{B} }

\newcommand{\Mod}{{\rm{Mod\mbox{-}}}}

\newcommand{\mmod}{{\rm{{mod\mbox{-}}}}}

\newcommand{\Ker}{{\rm{Ker}}}

\newcommand{\Hom}{{\rm{Hom}}}

\theoremstyle{plain}
\newtheorem{theorem}{Theorem}[section]
\newtheorem{corollary}[theorem]{Corollary}
\newtheorem{lemma}[theorem]{Lemma}

\newtheorem{proposition}[theorem]{Proposition}

\theoremstyle{definition}
\newtheorem{definition}[theorem]{Definition}

\newtheorem{construction}[theorem]{Construction}

\newtheorem{remark}[theorem]{Remark}

\def\repr[#1;#2;#3;#4;#5]{
	\left(
	\begin{matrix}#1\\#2\end{matrix}
	#3
	\begin{matrix}#4\\#5\end{matrix}
	\right)}
%



\usepackage[all]{xy}
\setlength{\textwidth}{155mm}
\setlength{\textheight}{219mm}
\setlength{\oddsidemargin}{3mm}
\setlength{\evensidemargin}{3mm}


\numberwithin{equation}{section}
\numberwithin{figure}{section}

\newcommand\reallywidehat[1]{\arraycolsep=0pt\relax%
	\begin{array}{c}
		\stretchto{
			\scaleto{
				\scalerel*[\widthof{\ensuremath{#1}}]{\kern-.5pt\bigwedge\kern-.5pt}
				{\rule[-\textheight/2]{1ex}{\textheight}} 
			}{\textheight} %
		}{0.7ex}\\           
		#1\\                 
		\rule{-1ex}{0ex}
	\end{array}}

\begin{document}

\title[  Covering theory, (mono)morphism categories and  stable Auslander algebras]{Covering theory, (mono)morphism categories and  stable Auslander algebras}

\author[Rasool Hafezi and Elham Mahdavi  ]{Rasool Hafezi and Elham Mahdavi}
\dedicatory{Dedicated to the memory of  Andrzej Skowro{\'n}ski}
\address{School of Mathematics, Institute for Research in Fundamental Sciences (IPM), P.O.Box: 19395-5746, Tehran, Iran}
\email{hafezira@gmail.com, hafez@ipm.ir}
\address{ Department of Mathematics, University of Isfahan, P.O.Box:81746-73441, Isfahan, Iran}
\email{e.mahdavi@sci.ui.ac.ir}

\subjclass[2010]{18A25, 16G70, 16G10}

\keywords{Galois  (pre)covering;  (mono)morphism category;  locally bounded category; stable Auslander algebras.}


\begin{abstract}
Let $\CA$ be a locally bounded $k$-category  and $G$
a torsion-free  group of $k$-linear automorphisms of $\CA$ acting freely on the objects of $\CA,$ and  $F:\CA\rt \CB$ is a Galois functor. We extend  naturally the push-down  functor $F_{\la}$ to the functor $\rm{H}\rm{F}_{\la}:\rm{H}(\mmod \CA)\rt \rm{H}(\mmod \CB)$, resp. $\CS \rm{F}_{\la}:\CS(\mmod \CA)\rt \CS(\mmod \CB)$, between the corresponding morphism categories, resp. monomorphism categories,  of $\mmod \CA$ and $\mmod \CB$. Under some additional conditions, we show that $\rm{H}(\mmod \CA)$, resp. $\CS(\mmod \CA)$, is locally bounded if and only if $\rm{H}(\mmod \CB)$, resp. $\CS(\mmod \CB)$, is of finite representation type. As an application, we show that the stable Auslander algebra of a representation-finite  selfinjective algebra $\La$ is again representation-finite if and only if $\La$ is of Dynkin type  $\mathbb{A}_{n}$ with $n\leqslant 4$.
\end{abstract}

\maketitle
\section{Introduction}\label{Introduction}
Assume that $\mathcal{A}$ is an abelian category. The {\em morphism category} of $\mathcal{A}$ is an abelian  category $\rm{H}(\mathcal{A})$ defined by the following data.
The objects of $\rm{H}(\mathcal{A})$ are all   morphisms $f:X\rightarrow Y$ in $\mathcal{A}$. The morphisms from object $(X\st{f}\rightarrow Y)$ to $(X'\st{f'}\rightarrow Y')$ are pairs $(a,b)$, where $a:X\rightarrow X'$ and $b:Y\rightarrow Y'$ such that $b\circ f=f'\circ a$. The composition of morphisms is component-wise. We denote by $\CS(\mathcal{A})$ the full subcategory of $\rm{H}(\mathcal{A})$ consisting of all monomorphisms in $\mathcal{A}$, which is called the {\em monomorphism category of}  $\mathcal{A}$. 	Some authors use the terminology  submodule category of  $\mathcal{A}$, see e.g., \cite{RS2}.\\
Here, we focus on  $\CA:=\mmod \La$, the category of finitely generated (right) $\La$-module over finite dimensional algebra $\La$.
Let $T_2(\La):= \tiny {\left[\begin{array}{ll} \La & 0 \\ \La & \La \end{array} \right]}$ be the lower triangular $2\times 2$ matrix algebra over $\La$, and
recall that the category $\rm{H}(\mmod \La)$ is equivalent to the category of finitely generated right modules over the path algebra $\La A_2 \simeq T_2(\La)$, where $A_2:\bullet\rt \bullet$.
Up to this equivalence,  objects of $\rm{H}(\mmod \La)$ are the  finitely generated  right $T_2(\La)$-modules.

The categories $\rm{H}(\mmod \La)$ and $\CS(\mmod \La)$ are much more complicated than the underlying module categories $\mmod \La$. It is very far to be expected that $\rm{H}(\mmod \La)$  or  $\CS(\mmod \La)$ to be of finite representation type (having only finitely many indecomposable objects up to isomorphism), even in the simple case where $\La$ is a uniserial algebra. For instance, the authors in \cite{RS1} were shown that   for $n \leqslant 5$ the monomorphism category $\CS(\mmod k[x]/(x^n))$ is of  finite representation (see Corollary \ref{Corollary 7.8} for a generalization to selfinjective Nakayama algebras), and   while for $n\geqslant 7$ it is of  wild representation type,  where one cannot expect any classification of indecomposable objects. For the remaining case $n=6$ the authors gave a complete classification by using different methods in the representation theory, including Auslander-Reiten theory and covering theory developed by Gabriel and his school. Accordingly, such a  classification result demonstrates the complexity of $\rm{H}(\mmod \La)$. \\

The importance of the representation theory of $T_2(\La)$  for the representation theory of $\La$ was pointed out by M. Auslander \cite{Au4} and has since drawn the attention of several people, e.g. \cite{Ar2, LSk,  LS} and references therein. Such attention is mostly to determine when $T_2(\La)$ is of (tame, domestic and polynomial growth) finite representation type by using various methods, in particular covering techniques. Assume $\La$ is representation-finite. Let $\{M_1\cdots M_n\}$ be a complete  set of representatives of the isomorphism  classes of indecomposable $\La$-modules. Let $M=M_1\oplus\cdots \oplus M_n$ and $A=\rm{End}_{\La}(M)$. Then $A$ is called the {\it Auslander algebra} of $\La$. As discussed in \cite{Ar2} (see also Remark \ref{remark 6.3}), the algebra $A$ is representation-finite if and only if so is $T_2(\La)$. Therefore, the representation theory of $T_2(\La)$ and $A$ are very close to each other. A necessary and sufficient condition for $\La$ such that its Auslander algebra $A$ is of finite representation type is given in \cite{IPTZ} in terms of the universal covering of the Auslander-Reiten quiver of $\La.$\\
The study of the monomorphism category has been recently attracted more attention and studied in a systematic and deep  work by Ringel and Schmidmeier \cite{RS1, RS2}. See also \cite{LZ, XZZ, XZ} for a generalization of such works. The study of the submodule category goes back to Birkhoff \cite{Bi}, where he classified  the indecomposable objects of $\CS(\mmod \mathbb{Z}/(p^n))$ with $n\geqslant 2$ and $p$ a prime number, and see also \cite{Si} for the case that the ring is of the form $k[x]/(x^n)$, where $k$ is a field. It is worth noting that D. Kussin, H. Lenzing and H. Meltzer \cite{KLM} established a surprising link between the stable submodule category with the singularity theory via weighted projective lines of type $(2,3, p)$. \\
Ringel and Zhang \cite{RZ} presented a nice connection   between the submodule category $\CS(\mmod k[x]/(x^n))$  and the module category  of the preprojective algebra $\Pi_{n-1}$ of type $\mathbb{A}_{n-1}$ with the help of the  functor was constructed by Auslander and Reiten quite a long time ago, and  recently another one by Li and Zhang (see \cite{AHS} for a higher version of theses functors). Such a nice connection was generalized recently by the first named author \cite{H} between the $\CS(\mmod \La)$ and the module category over the stable Auslander algebra of $\La$, assuming $\La$ is representation-finite (see also \cite{E} for a generalization in the setting of selfinjective algebras). Recall the {\it stable Auslander algebra} of $\La$ is the quotient algebra $A/\CP$, where $A$ is the Auslander algebra of $\La$, defined in the above, and $\CP$ is the two-sided ideal of $A$ formed by all endomorphisms $M\rt M$ factor through a projective $\La$-module. Note that  the preprojective algebra $\Pi_{n-1}$ is the stable Auslander algebra of $k[x]/(x^n)$ (see \cite{DL},Theorem 3, Theorem 4 and Chapter 7).  In sum, analogue to the morphism categories, the study of the monomorphism categories is related to the stable Auslander algebras. In \cite{H}, a description of almost split sequences in $\CS(\mmod \La)$ via the ones of $\mmod A$ is given. A similar consideration also will be given in  Section \ref{Section 5} for the morphism category case. Such  descriptions  are vital for us in Section \ref{Section 5} to prove our main result in concern of the stability of almost split sequences under the covering functors.\\
Covering technique has been introduced into representation theory of algebras and developed in a series of papers by K. Bongartz, P. Gabriel , C. Riedtmann \cite{BG,G, Ri3} and, E.
Green, A. de la Pe\~a, Mart\'inez-Villa, et. al., \cite{Gr,MD}, and more recently by H. Asashiba and R. Bautista and S. Liu \cite{A,BL}. This technique reduces a problem  for modules over an algebra $\CA$ to that of a category $\CA$, often much simpler, with an action of  a group $G$ such that $\CA$ is equivalent to the orbit category $\CA/G$. \\
The aim of this paper is to formulate simultaneously the covering theory in general  for  morphism and monomorphism categories.

 	Let $\CA$ be a locally bounded $k$-catgery  and $G$
 a torsion-free  group of $k$-linear automorphisms of $\CA$ acting freely on the objects of $\CA,$ and  $F:\CA\rt \CB$ is a Galois functor, see Section \ref{Perliminary} for notion and notation relating to the covering theory. One of the most important result in the covering theory  is Gabriel's theorem \cite{G} which asserts:  $\CA$ is locally representation-finite (or  equivalently locally bounded) if and only if $\CB$ is so. Recall that  a locally finite dimensional category  $\CC$ is called  {locally bounded} if, for every $x \in \CC,$ there are only finitely many $y \in \CA$ such that $\CC(x, y)\neq 0$ or $\CC(y, x)\neq 0.$   In a natural way, we  extend the push-down functor $F_{\la}:\mmod \CA\rt \mmod \CB$ to  the functors $\rm{H}\rm{F}_{\la}:\rm{H}(\mmod \CA)\rt \rm{H}(\mmod \CB)$ and $\CS \rm{F}_{\la}:\CS(\mmod \CA)\rt \CS(\mmod \CB)$. We show that these functors play the role of the push-down functor of $F_{\la}$ for the relevant morphism and monomorphism categories. The constructions enable us  to prove the following theorem, that is a version of Gabriel's theorem in our desired setting.  

\begin{theorem}(see Theorem \ref{Theorem 6.5})\label{Theorem 1.1}
	Assume  further the action of $G$ on $\CA$ have only finitely many $G$-orbits and $\mmod \CB$ is of finite representation type and  connected.
	\begin{itemize}
		\item [$(i)$] The following conditions are equivalent.
		\begin{itemize}		
			\item [$(1)$] $\rm{H}(\mmod \CA)$ is locally bounded.		
			\item [$(2)$] $\rm{H}(\mmod \CB)$ is of finite representation type.			
		\end{itemize}
		\item [$(ii)$] The following conditions are equivalent.
		\begin{itemize}
			\item [$(1)$] $\CS(\mmod \CA)$ is locally bounded.			
			\item [$(2)$] $\CS(\mmod \CB)$ is of finite representation type.			
		\end{itemize}
	\end{itemize}
\end{theorem}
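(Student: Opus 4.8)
The plan is to show that the extended push-down functors behave on indecomposable objects exactly like the push-down functor of a Galois covering, and then to deduce the statement from Gabriel's covering theorem \cite{G}. I would carry out part $(i)$ in detail and obtain $(ii)$ by the identical argument, replacing $\mathrm{H}(-)$, $\mathrm{H}\mathrm{F}_{\la}$ and the Auslander algebra by $\CS(-)$, $\CS\mathrm{F}_{\la}$ and the \emph{stable} Auslander algebra throughout. First I would record the reductions. Since $G$ has only finitely many orbits on the objects of $\CA$, the orbit category $\CB=\CA/G$ has finitely many objects, so $\mmod\CB\simeq\mmod B$ for a finite-dimensional $k$-algebra $B$, which by hypothesis is connected and representation-finite; hence $\mathrm{H}(\mmod\CB)\simeq\mmod T_2(B)$ is connected. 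Applying Gabriel's theorem to $F$ itself, $\CA$ is locally representation-finite, and, by the same theorem together with the density of $F_{\la}$, the number of $G$-orbits of isomorphism classes of indecomposable $\CA$-modules equals the number of indecomposable $B$-modules, hence is finite. Consequently the full subcategory $\widetilde{\CR}$ of $\mmod\CA$ spanned by the indecomposables is a locally bounded $k$-category carrying a free $G$-action with finitely many orbits, $\widetilde{\CR}/G$ is the Auslander category $\CR$ of $\CB$, and $F_{\la}$ restricts to the push-down functor of a Galois covering $\widetilde{\CR}\to\CR$.

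Next I would verify that $\mathrm{H}\mathrm{F}_{\la}\colon\mathrm{H}(\mmod\CA)\to\mathrm{H}(\mmod\CB)$ has the formal features of a push-down functor of a Galois covering with group $G$. The $G$-action on $\CA$ induces a free $G$-action on $\mathrm{H}(\mmod\CA)$ with $\mathrm{H}\mathrm{F}_{\la}\circ{}^{g}(-)\cong\mathrm{H}\mathrm{F}_{\la}$ for all $g\in G$, since $\mathrm{H}\mathrm{F}_{\la}$ is built componentwise from $F_{\la}$. For the covering isomorphisms on morphism spaces one observes that $\Hom_{\mathrm{H}(\mmod\CA)}\big((X\st{f}\rightarrow Y),(X'\st{f'}\rightarrow Y')\big)$ is the kernel of $\Hom_{\CA}(X,X')\oplus\Hom_{\CA}(Y,Y')\to\Hom_{\CA}(X,Y')$, $(a,b)\mapsto bf-f'a$, naturally in both arguments; applying the isomorphism $\bigoplus_{g\in G}\Hom_{\CA}(-,{}^{g}-)\xrightarrow{\sim}\Hom_{\CB}(F_{\la}-,F_{\la}-)$ for finitely generated modules over the locally bounded category $\CA$ and passing to kernels yields
\[
\bigoplus_{g\in G}\Hom_{\mathrm{H}(\mmod\CA)}\!\big(\mathbf{X},{}^{g}\mathbf{Y}\big)\ \xrightarrow{\sim}\ \Hom_{\mathrm{H}(\mmod\CB)}\!\big(\mathrm{H}\mathrm{F}_{\la}\mathbf{X},\mathrm{H}\mathrm{F}_{\la}\mathbf{Y}\big),
\]
together with its dual. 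As in the module case, freeness and torsion-freeness of the action then give that $\mathrm{H}\mathrm{F}_{\la}$ preserves indecomposability, that $\mathrm{H}\mathrm{F}_{\la}\mathbf{X}\cong\mathrm{H}\mathrm{F}_{\la}\mathbf{X}'$ forces $\mathbf{X}'\cong{}^{g}\mathbf{X}$ for some $g\in G$, and --- combining this with density of $F_{\la}$ on the indecomposables of $\mmod\CB$ and the description of indecomposables of a morphism category --- that $\mathrm{H}\mathrm{F}_{\la}$ is dense on indecomposables.

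The decisive point, and the step I expect to be the main obstacle, is the stability of almost split sequences: one must show that $\mathrm{H}\mathrm{F}_{\la}$ carries an almost split sequence of $\mathrm{H}(\mmod\CA)$ to an almost split sequence of $\mathrm{H}(\mmod\CB)$, compatibly (up to the $G$-action) with the Auslander--Reiten translate and the irreducible maps. Here I would invoke the descriptions obtained in Section~\ref{Section 5} of the almost split sequences of $\mathrm{H}(\mmod\CA)$, resp. $\mathrm{H}(\mmod\CB)$, in terms of those over the (stable) Auslander categories $\widetilde{\CR}$, resp. $\CR$: under these descriptions $\mathrm{H}\mathrm{F}_{\la}$ is identified with the push-down functor of the Galois covering $\widetilde{\CR}\to\CR$ from the reduction step, and a push-down functor of a Galois covering of locally representation-finite categories preserves almost split sequences; the finitely many projective--injective objects are handled directly from the componentwise formula for $\mathrm{H}\mathrm{F}_{\la}$. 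It then follows that the (stable) Auslander--Reiten quiver of $\mathrm{H}(\mmod\CA)$, together with the $G$-action, is a Galois covering of the (stable) Auslander--Reiten quiver of $\mathrm{H}(\mmod\CB)$.

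Finally I would conclude. For $(2)\Rightarrow(1)$: if $\mathrm{H}(\mmod\CB)$ is representation-finite its Auslander--Reiten quiver is a finite connected translation quiver, so its Galois covering --- the Auslander--Reiten quiver of $\mathrm{H}(\mmod\CA)$ --- is locally finite, and together with the Hom-finiteness supplied by the covering isomorphisms above this gives that $\mathrm{H}(\mmod\CA)$ is locally bounded. For $(1)\Rightarrow(2)$: if $\mathrm{H}(\mmod\CA)$ is locally bounded, then, via the correspondence between $\mathrm{H}(\mmod\CA)$ and $\widetilde{\CR}$ (the locally bounded analogue of the fact that $T_2(\La)$ and the Auslander algebra of $\La$ are simultaneously representation-finite) and Gabriel's theorem applied to $\widetilde{\CR}\to\CR$, the category $\CR$, i.e. the Auslander algebra of $B$, is representation-finite, whence so is $T_2(B)\simeq T_2(\CB)$, that is, $\mathrm{H}(\mmod\CB)$ is of finite representation type. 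Part $(ii)$ is obtained in the same way, with $\CS\mathrm{F}_{\la}$, the monomorphism-category constructions, and the stable Auslander algebra in place of their unstable counterparts, the connectedness hypothesis on $\mmod\CB$ being used to guarantee the connectedness needed for the base Auslander--Reiten quiver.
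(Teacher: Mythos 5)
Your overall architecture matches the paper's: extend the push-down componentwise, establish the covering isomorphisms on Hom-spaces, prove preservation of almost split sequences, deduce a Galois covering of Auslander--Reiten quivers (the paper's Theorem \ref{Theorem 6.2}), and conclude via the finite-component criterion and the equivalences with functor categories (Lemma \ref{Lemma 6.4}, Remark \ref{remark 6.3}); your idea of seeing $\rm{H}F_{\la}$, through the functor-category descriptions, as the push-down of the induced covering $\widetilde{\CR}\rt \CR$ of Auslander categories is essentially what the paper does via Pastuszak's functor $\Phi$ (Proposition \ref{Prop 4.2}, Lemma \ref{Lemma 5.11}, Propositions \ref{Proposition 5.13} and \ref{Prop 5.16}). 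The genuine gap is in your treatment of the indecomposables that this identification does not see. The equivalence $\rm{H}(\mmod \CA)/\CU\simeq \CF(\mmod \CA)$ (and $\CS(\mmod \CA)/\CV\simeq \CF(\underline{\rm{mod}}\mbox{-}\CA)$) kills, or sends to projectives, every object of the form $(X\rt 0)$, $(X\st{1}\rt X)$, $(0\rt X)$, and in the monomorphism case also $(\Omega(C)\hookrightarrow P)$, where $X$ (resp. $C$) runs over \emph{all} indecomposables of $\mmod \CA$. These are not ``finitely many projective--injective objects'': they form infinite families (one member for each of the infinitely many indecomposable $\CA$-modules when $G$ is infinite), most of them are neither projective nor injective in $\rm{H}(\mmod \CA)$ or $\CS(\mmod \CA)$, and they are ending terms of honest almost split sequences. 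Your reduction to the Auslander-category covering says nothing about these sequences, and your $(2)\Rightarrow(1)$ argument runs through the covering of the full Auslander--Reiten quivers, which requires preservation of almost split sequences at precisely these vertices. Dismissing them ``directly from the componentwise formula'' is not enough: the paper needs the explicit structure of the almost split sequences ending at such objects (Lemma \ref{AlmostSplittrivialmonomorphisms} and Lemma \ref{Lemma 5.2}, taken from Ringel--Schmidmeier), the fact that $F_{\la}$ preserves almost split sequences, injective envelopes/projective covers and minimal morphisms at the module level, and the compatibility of $\rm{H}F_{\la}$ with the kernel/cokernel functors, to handle them in Propositions \ref{Proposition 5.11} and \ref{Proposition 5.14}.

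Two smaller points. First, the statement you invoke, ``a push-down functor of a Galois covering of locally representation-finite categories preserves almost split sequences,'' should be quoted with the correct hypothesis (locally bounded source with admissible, i.e.\ torsion-free free, action); as you phrase it, applying it to $\widetilde{\CR}\rt \CR$ presupposes local representation-finiteness of $\widetilde{\CR}$, which is essentially condition $(1)$ of the theorem, so you must be explicit that in each implication the needed finiteness is already available (from the hypothesis on $\mmod \CB$ and Gabriel's theorem, or from the assumption $(1)$). Second, in $(2)\Rightarrow(1)$, local finiteness of the covering translation quiver together with Hom-finiteness does not by itself yield local boundedness of $\rm{H}(\mmod \CA)$; one needs the finite-component argument (the content of Lemma \ref{Lemma 6.4}, via a Harada--Sai type bound) transported along the quiver covering, as in the paper's proof of Theorem \ref{Theorem 6.5}. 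If instead you ran both implications entirely through the equivalences of Remark \ref{remark 6.3} and Gabriel's theorem for $\widetilde{\CR}\rt \CR$, you could bypass the exceptional families altogether, but then you must actually prove that $\widetilde{\CR}\rt\CR$ is a Galois covering in the sense of Definition \ref{Def 2.1} (density of $F_{\la}$ on indecomposables plus Theorem \ref{Theorem 2.2}), rather than assert it.
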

As an important application of the above theorem we prove that the stable Auslander algebra of a representation-finite  selfinjective algebra $\La$ is of finite representation type if and only if  $\La$ is of Dynkin type   $\mathbb{A}_{n}$ with $n\leqslant 4$. For more details,  see Section \ref{Section 7}.\\
We should remark that Z. Leszczy\'nski and A. Skowro{\'n}ski in \cite{LSk} showed when the triangular matrix $T_2(\La)$ is of  polynomial growth, then  there is a  Galois functor $F^2_{\la}:\mmod T_2(\hat{\La})\rt \mmod T_2(\La)$ which is induced by a Galois covering $F:\hat{\La}\rt \La$. Next, they applied the  Galois functor  $F^2_{\la}$ to classify for which the triangular matrix $T_2(\La)$ is of tame representation type.
What we follows in our work is to reformulate a covering theory for triangular matrix algebras in terms of functor categories and morphism categories. Then  we apply parallel such an approach to establish a covering theory for monomorphism categories. Although such a covering technique  for monomorphism categories have been used  for instance in \cite{RS1,Sc}  for some special cases. Our work provides a unified way of using covering methods for monomorphism categories.
 
The paper is organized as follows. Section \ref{Perliminary} contains an overview of the necessary background  on Galois
coverings of locally bounded $k$-categories and the associated pull-up and push-down functors. Section \ref{Subsection 3.1} is devoted to prove that the functors $\rm{H}\rm{F}_{\la}$ and $\CS \rm{F}_{\la}$ are $G$-precovering, in the sense of \cite{BL}. We begin Section \ref{Subsection 3.2} by recalling the definition of the functor $\Phi:\CF(\mmod\CA )\rt \CF(\mmod \CB)$ introduced  in \cite{P}.  Then the relationship between our functors  $\rm{H}\rm{F}_{\la}$ and $\CS \rm{F}_{\la}$ and $\Phi$ will be  investigated. Section \ref{Section 5} is the core of our paper where we will prove that the aforementioned functors preserve the almost split sequences. For the proof, the auxiliary functor $\Phi$ is vital. Sections \ref{Section 6} and \ref{Section 7} are devoted to prove Theorem \ref{Theorem 1.1} and its interesting application, respectively. We mention that Section \ref{Section 6} is indeed a direct consequence of our results in Section \ref{Section 5}, however for the convenience of the reader we will provide the necessary known backgrounds. 
\subsection*{Notations and  Conventions} To consider a map $A \st{f}\rt B$ as an object in $\rm{H}(\CA)$ we use the parentheses, and denote it
by $(A \st{f}\rt B)$. We also  sometimes  write 
$$
 \quad \begin{array}{c}
A \\
\mathrel{\mathop{\downarrow}^f}
\\
B
\end{array} $$
Especially, the first notation is used in diagrams. A morphism $(\phi_1, \phi_2): (A\st{f}\rt B)\rt (C\st{g}\rt D)$ in $\rm{H}(\CA)$ is also visualized as 
{\footnotesize  \[\xymatrix@R-2pc {  &  A\ar[dd]^{f}  & C\ar[dd]~  \\   &  _{\ \ \ \ \   }\ar[r]^{\phi_1}_{\phi_2}  \ar[r]&_{\ \  }~ {g}   \\ & B & D  }\]}
Let $\CC$ be a skeletally small (additive) category. The Hom-set between two objects $X$ and $Y$ in $\CC$ will be denoted by  $\CC(X, Y)$. By definition, a (right) $\CC$-module is a contravariant additive functor $H:\CC^{\rm{op}}\rt \CA b$, where $\CA b$ denotes the category of abelian groups. The $\CC$-modules and natural transformations between them, called $\CC$-homomorphism (we sometimes only say morphism), form an abelian category denoted by $\Mod \CC$. An $\CC$-module $H$ is called {\it finitely presented} if there exists an exact sequence $$\CC(-, C')\rt \CC(-, C)\rt H\rt 0 $$ with $C$ and $C'$ in $\CC$. Following \cite{P}, we denote by  $\CF(\CC)$ the category of all finitely presented functors over $\CC.$ The composition of two morphisms (resp. natural transformations) $f:X\rt Y$ and $g:Y\rt Z$ of  a given category (resp. between some categories)  is denoted by  $g\circ f$, or sometimes simply $gf.$

\section{Galois coverings of locally bounded $K$-categories}\label{Perliminary}

Throughout this paper $k$ denotes an algebraically closed field. The word algebra always means a basic finite dimensional $k$-algebra with identity, and all categories and functors are assumed to be $k$-category and  $k$-linear, respectively.
   A $k$-category is a category $\CA$ whose morphism-sets  $\CA(x, y)$, $x, y \in \CA$ are endowed with $k$-vector space structures such that the composition maps are $k$-bilinear.  We record all algebras as categories and work. 
  In below we shall explain how one can consider an algebra as a category. A  category $\CA$ is called {\it skeletally small} if the isoclasses of objects of $\CA$ form a set. A skeletally small category $\CA$ is called  
  {\it locally finite dimensional} if the following conditions are satisfied:
 \begin{itemize}
 	\item [$(i)$] $\CA$ is {\it basic}, i.e., distinct objects of $\CA$ are not isomorphic;
 	\item [$(ii)$] $\CA$ is {\it semiperfect}, i.e., every object of $\CA$ has a local endomorphism ring;
 	\item [$(iii)$] $\CA$ is {\it Hom-finite}, i.e., the space $\CA(x, y)$ is finite dimensional for every $x, y \in \CA.$ 
 \end{itemize}
A locally finite dimensional category  $\CA$ is called {\it finite} if it has only a finite number of objects; and $\CA$
is called {locally bounded} if, for every $x \in \CA,$ there are only finitely many $y \in \CA$ such that $\CA(x, y)\neq 0$ or $\CA(y, x)\neq 0.$ For example, let $A$ be a basic algebra (basic means that A is the direct sum of pairwise non-isomorphic indecomposable projective $A$-modules) and let $\{e_1, \cdots, e_n\}$ be a complete set of pairwise orthogonal primitive idempotents. Then $A$ can be viewed as a locally
bounded category as follows: $e_1, \cdots, e_n$ are the objects of $A$, the space of morphisms from
$e_i$ to $e_j$ is equal to $e_jAe_i$ for any $i, j$ and the composition of morphisms is induced by the
product in $A$. Notice that different choices for the primitive idempotents $e_1\cdots e_n$ give rise to
isomorphic $k$-categories.\\
 We say that a group $G$ acting on a category $\CA$, that is, there exists a homomorphism $A$ from $G$ into the group of automorphisms of $\CA.$ The $G$-action on $\CA$ is called {\it free} provided that  $A(g)(x)=x$ if and only if $g=e$, where $e$ denotes the identity in the group $G$.

 Assume that $\CA, \CB$ are locally finite dimensional $k$-categories, $F:\CA\rt \CB$ is a $k$-linear functor and $G$
a group of $k$-linear automorphisms of $\CA$ acting freely (under inclusion homomorphism $G\hookrightarrow \rm{Aut}(\CA)$) on the objects of $\CA.$  For an  automorphism $g: \CA \rt \CA$, for ease of notation  we usually write $gx:=g(x)$, $gf:=g(f)$ for all objects $x$ and morphisms $f$ in $\CA.$ \\
\begin{definition}\label{Def 2.1} $F:\CA\rt \CB$ is said to be  a  {\it Galois covering} if and only if 
\begin{itemize}
	\item [$(a)$] the functor $F:\CA\rt \CB$ induces isomorphisms 
	$$\bigoplus_{g \in G}\CA(gx, y)\rt \CB(F(x), F(y)) \ \ \  \text{and} \ \ \bigoplus_{g \in G}\CA(x, gy)\rt \CB(F(x), F(y)).$$
for all $x, y \in \CA;$
\item [$(b)$] the functor $F:\CA\rt \CB$ induces a surjective function $\rm{ob}(\CA)\rt \rm{ob}(\CB);$
\item [$(c)$] $F\circ g=F,$ for any $g \in G;$
\item [$(d)$]for any $x, y \in \rm{ob}(\CA)$ such that $F(x)=F(y)$ there is $g \in G$ such that $gx=y.$
\end{itemize}
\end{definition}
We recall that (see \cite{BG, G}) a functor $F:\CA \rt \CB$ satisfies the above conditions if and only if $F$ induces an  isomorphism $\CB \simeq \CA/G$ where $\CA/G$ is the {\it orbit category}. Given a category $\CA$ and a $G$-action on $\CA$, the orbit category $\CA/G$ is defined as follows: the objects of $\CA/G$ are all classes $Gx$ in $\CA,$ where $Gx:=\{gx: g \in G\}$,  and the set of morphisms between two objects $Gx$ and $Gy$ are defined by 

$$\CA/G(Gx,Gy):=\Delta(Gx, Gy)^G$$
where $\Delta(Gx, Gy):=\{f=(f_{b, a}) \in \prod_{(a, b) \in G \times G} \CA(ax, by)\mid \text{f is row finite and column finite} \}$. Here $(-)^G$ stands for the set of $G$-invariant elements, namely
$$\Delta(Gx, Gy)^G:= \{f=(f_{b, a}) \in \Delta(Gx, Gy) \mid \forall  g \in G, f_{gb, ga}=g(f_{b, a})\}. $$
The canonical   functor $F:\CA\rt \CA/G$  which sends an object $x \in \CA$ to the associated orbit $Gx$   in  $\CA/G,$  and for any morphism $f \in \CA(x, y)$, $F(f
)$ is defined by $F(f):=(\delta_{(a, b)}af)_{(a, b) \in G\times G}$ is a Galois covering functor, where  the notation $\delta_{(a, b)}$ stands for the Kronecker delta, namely it has the value 1 if $a=b$,
and the value $0$ otherwise, and with respecting this convention, that is, $1f=f$ and $0f=0$ for any morphism $f$ in $\CA$. 
 
 \subsection*{Setup}  In the rest of the paper,  we fix $\CA$ is a locally bounded catgery  and $G$
  a group of $k$-linear automorphisms of $\CA$ acting freely on the objects of $\CA,$ and  $F:\CA\rt \CB$ is a Galois covering  functor. In the beginning of each section  we may add some more additional assumptions on our setting. However, for the convenience of the reader and avoiding any confusion, we recall the required hypothesis to state our main results. Further, we freely use the following introduced notations concerning $F$ throughout the paper. \\

 A {\it right $\CA$-module} is a $k$-linear contravariant functor from $\CA$ to the category $\Mod k$ of $k$-vector spaces. Note that since $\CA$ is a $k$-category, the notion of right  $\CA$-module defined here is equal to the one defined in Introduction.
  An $\CA$-module $M:\CA^{\rm{op}}\rt \Mod k$ is called {\it finite dimensional} if and only if $\rm{dim} \ M=\Sigma_{x \in \rm{ob}(\CA)} \rm{dim}_k \ M(x) < \infty.$ The category of all $\CA$-modules and  all finite dimensional $\CA$-modules are denoted by $\Mod \CA$ and $\mmod \CA$, respectively.\\
 The {\it pull-up} functor $F_{\bullet}:\Mod \CB\rt \Mod \CA$ associated with $F$ is the functor $(-)\circ F^{\rm{op}}.$ The pull-up functor has the left adjoint functor $F_{\la}:\Mod \CA\rt \Mod \CB$ and the right adjoint $F_{\rho}:\Mod \CA \rt \Mod \CB$ which are called the {\it push-down} functors. Because of the importance of the push-down functor $F_{\la} $ in below we describe it explicitly. \\
Assume that $M:\CA^{\rm{op}}\rt \Mod k$ is an $\CA$-module. We define the $\CB$-module $F_{\la}(M):\CB^{\rm{op}}\rt \Mod k$ as follows. Since $F$ is a surjection on objects, we can fix for any $a \in \rm{ob}(\CB)$, $x_a \in \rm{ob}(\CA)$ such that $F(x_a)=a.$ Set  $F_{\la}(M)(a):=\oplus_{g \in G}M(gx_a)$. Assume that $\alpha \in \CB(a, b)$.  Hence by the isomorphism $\bigoplus_{g \in G}\CA(gx_a, x_b)\simeq  \CB(F(x_a), F(x_b))=\CB(a, b)$ of Definition \ref{Def 2.1},   there is a unique representation $(\alpha_g)_{g \in G}$, where $\alpha_g:gx_a \rt x_b$ for any $g \in G,$ such that $\alpha=\Sigma_{g \in G}F(\alpha_g).$ Then $F_{\la}(M)(\alpha):F_{\la}(M)=\oplus_{g \in G}M(gx_a)\rt \oplus_{g \in G}M(gx_b)=F_{\la}(M)(b)$ is defined by the matrix presentation $[M(h\alpha_{h^{-1}g})]_{(g, h) \in G}$. Assume that $M, N$ are in $\mmod \CA$ and $f=(f_x)_{x\in \rm{ob}(\CA)}$, $f_x:M(x)\rt N(x).$ Then for  any $a \in \rm{ob}(\CB)$, $F_{\la}(f)(a):F_{\la}(M)(a)\rt F_{\la}(N)(a)$ is defined as diagonal matrix  $[\delta_{(g, h)}f_{gx_a}]_{(g, h) \in G \times G}$. Let $M \in \mmod \CA$ and $X \in \mmod \CB$ be given. The assignment $f=(f_a)_{a\in  \rm{ob}(\CB)}$ in $\mmod \CB(F_{\la}(M), X)$, where $f_a=[f^g_{x_a}]_{g \in G}$, $f^g_{x_a}:M(gx_a)\rt X(a)$, to the $\CA$-homomorphism $h=(h_y)_{y \in \text{ob}(\CA)}$ in $\Mod \CA(M, F_{\bullet}X)$, where $h_y:M(y)\rt X(F(y))$, $h_y=f^g_{x_{F(y)}}$ here $g$ is a unique element in $G$ with $y=gx_{F(y)}$, gives the adjoint isomorphism in $M$ and $X$, denoted by $\eta_{M, X}$. The inverse map $\eta^{-1}_{M, X}: \Mod \CA(M, F_{\bullet}X) \rt \mmod \CB(F_{\la}(M), X)$ is defined by sending $h=(h_{y})_{y\in \text{ob}(\CA)}$ to $f=(f_a)_{a \in \text{ob}(\CB)}$, where $f_a=\oplus_{g \in G} M(gx_a)\rt X(a)$, $f_a=[h_{gx_a}]_{g \in G}$. Note that here by using the Galois covering's properties (Definition \ref{Def 2.1}(c)), we have  $F_{\bullet}X(gx_a)=X(F(gx_a))=X(F(x_a))=X(a).$ \\

Observe that by definition if an $\CA$-module $M$ is finite dimensional, then so is  $F_{\la}(M)$. Hence the functor $F_{\la}$ restricts to a functor $\mmod \CA\rt \mmod \CB.$ This functor is still denoted by $F_{\la}$.  \\
Let $\CA$-module $M$ be given. We denote by ${}^gM$ the $\CA$-module $M\circ g^{-1}$, where $g^{-1}$ is the inverse of the given  automorphism $g:\CA\rt \CA.$. Given an $\CA$-module homomorphism $f:M\rt N, \ f=(f_x)_{x \in \text{ob}(\CA)}$, we denote by ${}^gf:{}^gM\rt {}^gN,\ {}^gf=(f_{g^{-1}x})_{x \in \text{ob}(\CA)}.$ Any $g \in G$ induces an automorphism $g:\mmod \CA\rt \mmod \CA$, still denoted by $g$,  by sending $M \in \mmod \CA$ to ${}^gM=M\circ g^{-1}$. The set of all the induced  automorphisms $g:\mmod\CA\rt \mmod \CA$ makes a group which is isomorphic to $G$. Hence we can  identify the obtained subgroup of the group of all automorphisms of $\mmod \CA$ with the group $G$. Such an identification allows us to define the promised action of $\mmod \CA.$ Hence, in  this way we can  define an action of $G$ on $\mmod \CA.$ Write $\text{ind}\mbox{-}\CA$ the subcategory of $\mmod \CA$ consisting of all indecomposable objects. We say that $G$ acts freely on $\rm{ind} \mbox{-}\CA$ if and only if ${}^gM\simeq M$ implies that $g=e.$ The action of $G$ on $\CA$ is called {\it admissible} if $G$ acts freely on $\rm{ind}\mbox{-}\CA$. The reader is cautioned that our notation of admissibility may be  in conflict with that of other references. If $G$ is torsion free, then $G$ acts freely on $\text{ind}\mbox{-}\CA$, see e.g.  \cite[Lemma 2.2]{BL}.  We recall from \cite{BG} that $F_{\bullet}\circ F_{\la}(M)\simeq \oplus_{g \in G}{}^gM$. In fact, the natural transformation (or $\CA$-homomorphism) $\chi_M:F_{\bullet}\circ F_{\la}(M)\rt \oplus_{g \in G}{}^gM$, $\chi_{M, x}:F_{\bullet}\circ F_{\la}(M)(x)=\oplus_{g \in G}M(gx_{F(x)})\rt \oplus_{g \in G}M(g^{-1}x)$, where $\chi_{M, x}$ sends the summand $M(gx_{F(x)})$ at the position $g \in G$ by the identity to the summand $M((h_xg^{-1})^{-1}x)$ at the position $h_xg^{-1} \in G$, here $h_{x}$ is a unique element in $G$ such that $x=h_xx_{F(x)}$, gives the functorial isomorphism between the relevant functors.  \\
 For $M \in \mmod \CA$, the {\it support} of $M$ is the full subcategory of $\CA$ formed by all objects $x$ in $\CA$ such that $M(x)\neq 0.$ The category $\CA$ is {\it locally support-finite} if and only if for any object $x$ in $\rm{ob}(\CA)$, the union $$\bigcup_{M \in \text{ind}\mbox{-}\CA, M(x)\neq 0 } \text{supp}(M)$$
is finite. We recall that $\CA$ is {\it locally representation finite} if for any object $x$ of $\CA$ there are finitely many indecomposable $M$ in $\mmod \CA$, up to isomorphisms, with $x \in \rm{supp}(M)$.
 Hence  every  locally  representation-finite category is  locally support-finite.  We should mention that there are locally representation-infinite  categories which are locally support-finite; the reader may consult \cite{DS}.  Let $\rm{Ind}\mbox{-}\CA$ denote  a complete set of non-isomorphic indecomposable objects in $\mmod \CA$. By definition, one can see easily that $\rm{Ind}\mbox{-}\CA$ is locally finite dimensional, and moreover $\CA$ is locally representation finite if and only if $\rm{Ind}\mbox{-}\CA$ is locally bounded. Also, if $\CA$ is finite, $\CA$ is locally representation-finite if and only if $\mmod \CA$ is of finite representation type (there are only  finitely many indecomposable $\CA$-modules up to isomorphism). In this case we sometimes say $\CA$ is representation-finite.  \\

The notion of Galois functor was generalized in \cite{BL} for general linear categories, see also \cite{A}. Such a general notion of Galois functor was defined in a way such that it satisfies the similar properties of  the push-down functor $F_{\la}$. Let us recall from  \cite{A, BL} the definition of Galois (pre)covering where we do not need the involved categories to be locally finite dimensional. Because of our setting in this paper we will introduce their general notion of Galois coverings for $k$-categories.\\
The following definition is due to Asashiba originally under name of {\it invariance adjuster}, see \cite[Definition 1.1]{A}.

\begin{definition}\label{Definition 2.2}
Let $\mathcal{C}$ and $\mathcal{D}$ be categories with a group $H$ of $k$-linear automorphisms of $\mathcal{C}$. A functor $K:\mathcal{C}\rt \mathcal{D}$ is called {\it $H$-stable} provided there exist functorial isomorphisms $\delta_h:K\circ h\rt K, \ h \in H,$ such that 
$$\delta_{g, x}\circ \delta_{h, gx}=\delta_{hg, x},$$
for any $h, g \in H$ and $x \in \text{ob}(\mathcal{C})$. Here, for any $h \in H$, $\delta_h= (\delta_{h, y})_{y \in \text{ob}(\mathcal{C})}$. In this case, we call $\delta=(\delta_h)_{h \in H}$ a {\it $H$-stabilizer} for $K$.
\end{definition}
 The following definition is also due to Asashiba, see \cite[Definition 1.7]{A}.
 \begin{definition}\label{definition 2.4}
 	Let $\mathcal{C}$ and $\mathcal{D}$ be categories with a group $H$ of $k$-linear automorphisms of $\mathcal{C}$. A functor $K:\mathcal{C}\rt \mathcal{D}$ is called {\it $H$-precovering} provided that $K$ has a $H$-stabilizer $\delta$ such that, for any $x ,y \in \text{ob}(\mathcal{C})$, one of the following map is isomorphic:
 	$$K_{x, y}:\bigoplus_{h \in H}\mathcal{C}(x, hy)\rt \mathcal{D}(K(x), K(y)): \ (u_h)_{h \in H}\mapsto \Sigma_{h \in H}\delta_{h, y}\circ K(u_h).$$
 	$$K^{x, y}:\bigoplus_{g \in H}\mathcal{C}(hx, y)\rt \mathcal{D}(K(x), K(y)): \ (v_h)_{h \in H}\mapsto \Sigma_{h \in H}K(v_h)\circ \delta^{-1}_{h, x}.$$
 \end{definition}
For every $X, Y \in \mmod \CA$
$$\nu_{X, Y}:\bigoplus_{g \in G}\mmod \CA(X, {}^gY)\rt \mmod \CB(F_{\la}(X), F_{\la}(Y))$$
is given by $$\nu_{X,Y}=\eta^{-1}_{X, F_{\la}(Y)}\circ \Mod \CA(X, \chi^{-1}_Y) \circ \Upsilon_{X,Y},$$ where $\Upsilon_{X,Y}:\oplus_{g \in G}\mmod \CA(X, {}^gY)\rt \Mod \CA(X, \oplus_{g \in G}{}^gY)$ is the natural isomorphism. \\
The push-down functor $F_{\la}:\mmod \CA\rt \mmod \CB$ has a $G$-stabilizer. Indeed,  for each $h \in G$, we define natural isomorphism  $\delta^F_h:F_{\la}\circ h\rt F_{\la}$, $\delta^F_h=(\delta^F_{h, X})_{X \in \mmod \CA}$, where for each  $a \in \rm{ob}(\CB)$ and $X \in \mmod \CA $, $\delta^F_{h, X, a}:F_{\la}\circ h(X)(a)=\oplus_{g \in G}X(h^{-1}gx_a)\rt F_{\la}(X)(a)=\oplus_{g \in G}X(gx_a)$ is defined by mapping the summand $X(h^{-1}gx_a)$
at the position $g$ by the identity to the $X(h^{-1}gx_a)$ at the position $h^{-1}g.$ A direct computation shows that $\delta^F=(\delta^F_h)$ is  a $G$-stabilizer and moreover $(F_{\la})_{X, Y}=\nu_{X, Y}.$ Therefore, $F_{\la}$ is a $G$-precovering, see also \cite[Theorem 6.5]{BL}.\\
Through the paper when we say that  a functor is Galois covering, we mean in the sense of the above definition unless stated in the sense of Definition \ref*{Def 2.1}. \\

\begin{definition}
		Let $\mathcal{C}$ and $\mathcal{D}$ be categories with a group $H$ of $k$-linear automorphisms of $\mathcal{C}$.  A $H$-precovering $K:\mathcal{C}\rt \mathcal{D}$ is called a {\it Galois $H$-covering} provided that it is dense.
\end{definition}
The below theorem summarize important properties of the push-down functor $F_{\la}$.

\begin{theorem}\label{Theorem 2.2}
	Assume that $\CA$ is a locally bounded $k$-catgery  and $G$
	an admissible  group of $k$-linear automorphisms of $\CA$ acting freely on the objects of $\CA,$ and  $F:\CA\rt \CB$ is a Galois covering functor as in Definition \ref{Def 2.1}. Then the functor $F_{\la}:\mmod \CA\rt \mmod \CB$ satisfies the following assertions. 
	\begin{itemize}
		\item [$(1)$] The functor $F_{\la}$ induces the following isomorphisms of vector spaces
		$$(F_{\la})^{X, Y}:\bigoplus_{g \in G}\mmod \CA({}^gX, Y)\st{\sim}\rt \mmod \CB(F_{\la}(X), F_{\la}(Y))\st{\sim}\leftarrow  \bigoplus_{g \in G}\mmod \CA(X, {}^gY):(F_{\la})_{X, Y}$$
		for any $X, Y \in \mmod \CA.$ In particular, the functor $F_{\la}$ is a $G$-Galois precovering.
		\item [$(2)$] If $\CA$ is locally support-finite, then  $F_{\la}$ is a $G$-Galois covering.
		\item [$(3)$] Assume that $X \in \mmod \CA$. Then $F_{\la}(X)\simeq F_{\la}({}^gX)$, for any $g \in G.$
		\item [$(4)$] Assume that $X, Y \in \text{ind}\mbox{-}\CA$. Then $F_{\la}(X) \in \text{ind}\mbox{-}\CB$ and $F_{\la}(X)\simeq F_{\la}(Y)$ yields $Y\simeq {}^gX$ for some $g \in G.$
	\end{itemize}
\end{theorem}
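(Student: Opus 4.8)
The plan is to derive Theorem \ref{Theorem 2.2} from the general machinery recalled above: the point is that the push-down functor $F_{\la}\colon\mmod\CA\rt\mmod\CB$ is already known (from the explicit description and from \cite[Theorem~6.5]{BL}) to be a $G$-precovering with the explicit $G$-stabilizer $\delta^F$, together with the identity $(F_{\la})_{X,Y}=\nu_{X,Y}$ established just before the statement. So parts (1)--(4) are really a matter of (a) unwinding what ``$G$-precovering'' and ``dense'' give us under the two sets of extra hypotheses ($G$ admissible; $\CA$ locally support-finite), and (b) combining the Hom-isomorphisms with standard Krull--Schmidt / support arguments. First I would record (1): the isomorphisms $(F_{\la})^{X,Y}$ and $(F_{\la})_{X,Y}$ are exactly Definition~\ref{definition 2.4} applied to $K=F_{\la}$, $H=G$, and they are available because $\delta^F$ is a $G$-stabilizer; the identification $(F_{\la})_{X,Y}=\nu_{X,Y}$ then expresses the second isomorphism through $\eta$, $\chi$ and $\Upsilon$, while the first is the ``op'' version of the same computation (or one invokes \cite[Theorem~6.5]{BL} directly). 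That $F_{\la}$ is in particular a $G$-precovering is then immediate, and it is a $G$-\emph{Galois} precovering once one knows $\nu_{X,X}$ sends the identity component to an iso, i.e.\ that the definition is non-vacuous.

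For (2): a $G$-precovering is a Galois $G$-covering precisely when it is dense, so I would show that $F_{\la}$ is dense on $\mmod\CB$ under the local support-finiteness hypothesis. Here the idea is the usual covering-theory argument: given an indecomposable $N\in\mmod\CB$, using condition (a) of Definition~\ref{Def 2.1} one can lift the (finitely many, by Hom-finiteness of $\CB$ and the direct-sum formula) structure maps of $N$ to a representation of $\CA$ supported on a finite subcategory — local support-finiteness is exactly what guarantees one can choose such a finite support coherently — and then $F_{\la}$ of a suitable indecomposable summand of that lift recovers $N$. I would cite the standard reference (\cite{DS}, or Gabriel's \cite{G}) for the density statement rather than reprove it in full. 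Part (3) is the cheapest: $F_{\la}(X)\simeq F_{\la}({}^gX)$ follows from the $G$-stabilizer, since $\delta^F_g\colon F_{\la}\circ g\rt F_{\la}$ evaluated at $X$ is exactly a natural isomorphism $F_{\la}({}^gX)\st{\sim}\rt F_{\la}(X)$; alternatively it drops out of (1) by plugging $Y={}^gX$ and chasing the identity.

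Part (4) is where the admissibility hypothesis (i.e.\ $G$ acts freely on $\mathrm{ind}\mbox{-}\CA$, automatic since $G$ is torsion-free) does the work, and I expect this to be the main obstacle — not because it is deep, but because it requires the Hom-space bookkeeping of (1) to be turned into a Krull--Schmidt statement. For indecomposability of $F_{\la}(X)$ with $X$ indecomposable: $\End_\CB(F_{\la}(X))\cong\bigoplus_{g\in G}\mmod\CA(X,{}^gX)$ by (1); the summand at $g=e$ is the local ring $\End_\CA(X)$, and admissibility together with the fact that a nonzero map ${}^gX\rt X$ between indecomposables that is not an isomorphism lies in the radical forces the off-diagonal summands into the radical of $\End_\CB(F_{\la}(X))$, whence the endomorphism ring is local and $F_{\la}(X)$ is indecomposable. (One must check the radical claim: if some $\mmod\CA(X,{}^gX)$, $g\neq e$, contained an isomorphism then ${}^{g}X\simeq X$, contradicting freeness on $\mathrm{ind}\mbox{-}\CA$.) For the second assertion, $F_{\la}(X)\simeq F_{\la}(Y)$ with $X,Y$ indecomposable: an isomorphism $F_{\la}(X)\rt F_{\la}(Y)$ corresponds under (1) to a family $(u_g)_{g\in G}\in\bigoplus_g\mmod\CA(X,{}^gY)$, and composing with the inverse (a family in $\bigoplus_g\mmod\CA(Y,{}^gX)$) and reading off the identity component shows that some $u_g\colon X\rt{}^gY$ must be a split mono/epi, hence an isomorphism by indecomposability and Krull--Schmidt, so $Y\simeq{}^{g^{-1}}X$. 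I would phrase this last step carefully as a ``convolution product equals identity'' computation in the $G$-graded ring $\bigoplus_g\mmod\CA(X,{}^gY)\text{-bimodule}$ picture; that indexing is the only genuinely fiddly part, and everything else is formal.
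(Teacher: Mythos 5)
Parts (1)--(3) and the second half of (4) are fine and essentially parallel the paper, whose own proof is largely by citation: (1) is the discussion preceding the theorem, (2) is quoted from \cite[Subsection 2.5]{DS}, and (3),(4) are quoted from \cite[Lemma 2.9]{BL} together with Krause's criterion \cite[Proposition 5.4]{K} that over the Hom-finite category $\CB$ indecomposability is equivalent to locality of the endomorphism ring. Your one-line derivation of (3) from the stabilizer $\delta^F$ is correct (and more direct than the citation), and your ``read off the $e$-component of $\psi\circ\phi=1_X$'' argument for the implication $F_{\la}(X)\simeq F_{\la}(Y)\Rightarrow Y\simeq{}^gX$ is the standard one and works.

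The gap is in your proof that $F_{\la}(X)$ is indecomposable. From $\End_{\CB}(F_{\la}(X))\cong\bigoplus_{g\in G}\mmod\CA(X,{}^gX)$ you argue that, since for $g\neq e$ every map $X\rt{}^gX$ is a non-isomorphism between indecomposables (freeness on $\mathrm{ind}\mbox{-}\CA$), the off-diagonal summands lie in $\rad\,\End_{\CB}(F_{\la}(X))$, ``whence the ring is local.'' That inference is precisely the point requiring proof: what comes for free is only that $I:=\rad\End_{\CA}(X)\oplus\bigoplus_{g\neq e}\mmod\CA(X,{}^gX)$ is a two-sided ideal consisting of non-units with quotient $\End_{\CB}(F_{\la}(X))/I\cong k$, and this alone does not imply locality --- in $k\times k$ the ideal $0\times k$ has all these properties --- i.e.\ membership in the radical of the category $\mmod\CA$ does not automatically transfer to membership in the Jacobson radical of the endomorphism algebra of $F_{\la}(X)$ in $\mmod\CB$. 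To close the gap you need an extra ingredient: for instance, show $I$ is nil by a Harada--Sai argument (any $n$-fold product of elements of $I$ is a sum of composites of radical maps whose intermediate terms are the twists ${}^hX$, all of dimension $\dim_k X$, so each such composite vanishes once $n\geq 2^{\dim_k X}$; only finitely many $g$ contribute because supports are finite and $G$ acts freely), which gives $I\subseteq\rad$ and hence locality; or run the Krull--Schmidt/Azumaya argument on $F_{\bullet}F_{\la}(X)\simeq\oplus_{g\in G}{}^gX$ --- if $F_{\la}(X)=C\oplus C'$ with $C\neq0$, then $F_{\bullet}(C)$ is a $G$-stable sum of pairwise non-isomorphic twists ${}^gX$, forcing $C'=0$ --- which is exactly how the paper argues one level up in Proposition~\ref{Prop 3.2}(3); or simply cite \cite[Lemma 2.9]{BL} as the paper does.
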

\begin{proof}
$(1)$ We discussed it in the above. \cite[Subsection 2.5]{DS} implies $(2)$. $(3)$ and $(4)$ follow from \cite[Lemma 2.9]{BL} and using this fact that Hom-finiteness of $\CB$ implies (\cite[Proposition 5.4]{K}): for any $A \in \mmod \CB$, the endomorphism ring of  $A $ is local if and only if $A$ is indecomposable.
\end{proof}
 We discussed in  the above how the group $G$ defines an action on $\mmod \CA.$  For any $g \in G$, since $g:\mmod \CA\rt \mmod \CA$ is   an automorphism, it preserves the projective objects in $\mmod \CA$. Hence we get the induced  automorphism $g:\underline{\rm{mod}}\mbox{-}\CA\rt \underline{\rm{mod}}\mbox{-}\CA$, still denoted by $g$,  of the stable category $\underline{\rm{mod}}\mbox{-}\CA.$ Here and throughout, when we want to consider a morphism $f$ as a morphism in the stable category we use the notation $\underline{f}$. In the same way for $\mmod \CA,$ the group $G$ defines an action on $\underline{\rm{mod}}\mbox{-}\CA.$ Moreover, for any $g \in G$, the following diagram commutes
\[\xymatrix{ \mmod \CA\ar[rr]^{\pi} \ar[d]^{g} && \underline{\rm{mod}}\mbox{-}\CA \ar[d]^{g} \\ \rm{mod}\mbox{-}\CA \ar[rr]^{\pi} && \underline{\rm{mod}}\mbox{-}\CA }\]	
where $\pi$ is the quotient functor. 
Since the push-down functor $F_{\la}$ preserves the projective objects in $\mmod \CA$, it induces the functor $\underline{F_{\la}}:\underline{\rm{mod}}\mbox{-}\CA\rt \underline{\rm{mod}}\mbox{-}\CB$ between the stable categories. In the next result we show that the (stable) functor $\underline{F_{\la}}$ is a $
G$-precovering, and even $G$-Galois covering if we assume $\CA$ is locally support-finite. First of all, the $G$-stabilizer $(\delta^F_h)_{h \in G}$ induces the $G$-stabilizer $(\underline{\delta^F_h})_{h \in G}$, where $\underline{\delta^F_h}=(\underline{\delta^F_{h, X}})_{X \in \mmod \CA}$.
 The above observation is visualized in the following diagram: 
\[ 
\xymatrix@C=7em@R=7em{
	\mmod \CA \rtwocell^{F_{\la}\circ h}_{F_{\la}}{\ \ \delta^F_h}& \mmod \CB\\
\underline{\rm{mod}}\mbox{-}\CA \rtwocell^{\underline{F_{\la}}\circ \underline{h}}_{\underline{F_{\la}}}{\ \ \ \underline{\delta^F_h}} & \underline{\rm{mod}}\mbox{-}\CB.
	\ar_{\pi} "1,1";"2,1" 
	\ar^{\pi} "1,2";"2,2"  }\] 
Here  we use the same notation $\pi$ to denote  the corresponding quotient functor.
\begin{proposition}\label{Prop 2.6}
	Keep in mind the above notation. Then the following hold.
	\begin{itemize}
		\item [$(i)$] The functor $\underline{F_{\la}}:\underline{\rm{mod}}\mbox{-}\CA\rt \underline{\rm{mod}}\mbox{-}\CB$ is a $G$-precovering.
		\item [$(ii)$] If $\CA$ is locally support-finite, then  $\underline{F_{\la}}$ is a $G$-Galois covering.
	\end{itemize}
\end{proposition}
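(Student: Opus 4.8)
The plan is to deduce both parts from Theorem~\ref{Theorem 2.2}, which already handles $F_{\la}$. First I would record the standard identification $\underline{\rm{mod}}\mbox{-}\CA(X,Y)=\mmod \CA(X,Y)/\CP(X,Y)$, where $\CP(-,-)$ denotes the ideal of morphisms factoring through a projective object (and likewise over $\CB$); since the automorphisms $g$ of $\mmod \CA$ preserve projectives and the cocycle identity of Definition~\ref{Definition 2.2} descends to the quotient, $(\underline{\delta^F_h})_h$ is a genuine $G$-stabilizer for $\underline{F_{\la}}$, so $(\underline{F_{\la}})_{X,Y}$ is well defined and is obtained from $(F_{\la})_{X,Y}$ by composing with the canonical surjections onto the stable Hom-spaces (using $\pi\circ F_{\la}=\underline{F_{\la}}\circ\pi$). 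Because $(F_{\la})_{X,Y}$ is an isomorphism by Theorem~\ref{Theorem 2.2}(1), the induced map $(\underline{F_{\la}})_{X,Y}$ is automatically surjective, and it is an isomorphism — which is exactly assertion $(i)$ — if and only if the preimage of $\CP(F_{\la}X,F_{\la}Y)$ under $(F_{\la})_{X,Y}$ is precisely $\bigoplus_{g\in G}\CP(X,{}^gY)$. Thus part $(i)$ reduces to this equality of subspaces.

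One inclusion is easy: $F_{\la}$ preserves projectives and each $\delta^F_h$ is a natural isomorphism, so $\delta^F_{g,Y}\circ F_{\la}(f)$ factors through a projective whenever $f\colon X\to{}^gY$ does, and hence $(F_{\la})_{X,Y}$ carries $\bigoplus_g\CP(X,{}^gY)$ into $\CP(F_{\la}X,F_{\la}Y)$.

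The substantive direction — and the step I expect to require the most care — is the reverse inclusion. Given $\psi\colon F_{\la}X\to F_{\la}Y$ factoring as $F_{\la}X\xrightarrow{\alpha}Q\xrightarrow{\beta}F_{\la}Y$ with $Q$ a projective $\CB$-module, I would first note that $F_{\la}(\CA(-,x))\cong\CB(-,F(x))$ (immediate from the defining formula of $F_{\la}$ and the Galois isomorphisms of Definition~\ref{Def 2.1}) and that $F_{\la}$ commutes with direct sums, so every projective $\CB$-module is $F_{\la}(P)$ for some projective $\CA$-module $P$; thus we may take $Q=F_{\la}(P)$. Applying Theorem~\ref{Theorem 2.2}(1) to $(X,P)$ and to $(P,Y)$ I write $\alpha=\sum_h\delta^F_{h,P}\circ F_{\la}(\alpha_h)$ and $\beta=\sum_k\delta^F_{k,Y}\circ F_{\la}(\beta_k)$ with $\alpha_h\colon X\to{}^hP$, $\beta_k\colon P\to{}^kY$ (finitely many nonzero). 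Using naturality of $\delta^F_h$ in the form $F_{\la}(\beta_k)\circ\delta^F_{h,P}=\delta^F_{h,{}^kY}\circ F_{\la}({}^h\beta_k)$ and then the cocycle identity $\delta^F_{k,Y}\circ\delta^F_{h,{}^kY}=\delta^F_{hk,Y}$, the composite $\psi=\beta\circ\alpha$ rearranges as $\sum_{g\in G}\delta^F_{g,Y}\circ F_{\la}(\psi_g)$ with $\psi_g=\sum_{h\in G}\,{}^h\beta_{h^{-1}g}\circ\alpha_h\colon X\to{}^gY$. Each summand factors through ${}^hP$, which is projective since $h$ is an automorphism of $\mmod \CA$, so $\psi_g\in\CP(X,{}^gY)$; and by injectivity of $(F_{\la})_{X,Y}$ the family $(\psi_g)_g$ is the unique preimage of $\psi$. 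This closes the argument for $(i)$.

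For $(ii)$, when $\CA$ is locally support-finite Theorem~\ref{Theorem 2.2}(2) gives that $F_{\la}$ is a Galois $G$-covering, hence dense, so every object of $\mmod \CB$ is isomorphic to some $F_{\la}(M)$. Since the stable categories have the same objects as the module categories and isomorphisms of $\mmod \CB$ remain isomorphisms in $\underline{\rm{mod}}\mbox{-}\CB$, the functor $\underline{F_{\la}}$ is again dense, and together with $(i)$ this makes it a Galois $G$-covering. The only genuine obstacle is the bookkeeping with the stabilizers $\delta^F_h$ in the reverse inclusion; I expect naturality of $\delta^F$ together with the $G$-stabilizer axiom to make the relevant terms collapse exactly as needed, so that step should present no real difficulty.
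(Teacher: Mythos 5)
Your argument is correct, and the computation in the crucial step (decomposing $\alpha$ and $\beta$ via Theorem~\ref{Theorem 2.2}(1), then recombining with the naturality of $\delta^F$ and the cocycle identity to get $\psi=\sum_g\delta^F_{g,Y}\circ F_{\la}(\psi_g)$ with each $\psi_g$ factoring through some ${}^hP$) checks out, as does the finiteness bookkeeping; but your route is genuinely different from the paper's. The paper never lifts the projective $Q$ along $F_{\la}$: instead it passes through the adjunction $(F_{\la},F_{\bullet})$, descends the adjunction isomorphism $\eta_{M,F_{\la}(N)}$ to stable Hom-spaces using that $F_{\bullet}(Q)$ is a projective object of $\Mod \CA$ (this single observation replaces your whole ``hard direction''), fits it into a commutative square with $\underline{\Upsilon}_{M,N}$ and $\chi^{-1}_N$, and reduces everything to injectivity of the induced map $\underline{\eta}$ --- and that injectivity proof is exactly your ``easy inclusion'': if each $u_g$ factors through a projective $P_g$, then $f=\sum_g\delta^F_g\circ F_{\la}(u_g)$ factors through the finite sum $\oplus_g F_{\la}(P_g)$. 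What the paper's approach buys is a very short treatment of the difficult inclusion, at the price of working in $\Mod \CA$ and its stable category and of the comparison maps $\eta$, $\chi$, $\Upsilon$; what your approach buys is a self-contained argument that stays entirely inside the finite-dimensional module categories and makes explicit the equality of ideals $(F_{\la})_{X,Y}^{-1}\bigl(\CP(F_{\la}X,F_{\la}Y)\bigr)=\oplus_{g\in G}\CP(X,{}^gY)$, i.e.\ the standard mechanism by which a precovering of module categories induces one on stable categories (your lifting of projectives uses $F_{\la}(\CA(-,x))\cong\CB(-,F(x))$, surjectivity of $F$ on objects, and the Krull--Schmidt property of $\mmod \CB$, all available here since $\CB$ is locally bounded). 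Part (ii) is handled the same way in both: density of $\underline{F_{\la}}$ is immediate from density of $F_{\la}$, which is why the paper only writes out (i).
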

\begin{proof}
	We only need to prove $(i)$. Let $M$ and $N$ be in $\mmod \CA.$ We must show that the $k$-linear map $\underline{F_{\la}}_{M, N}$  is an isomorphism (See Definition \ref{Definition 2.2}). Since $M$ is finitely generated, there is the following canonical isomorphism
	$$ \Upsilon_{M, N}:\oplus_{g \in G}\mmod \CA(M, {}^gN)\st{\sim}\rt \Mod \CA(M, \oplus_{g \in G}{}^gN).$$
	 The above isomorphism gives rise to the following isomorphism between the Hom-spaces in the stable categories
	 	$$\underline{\Upsilon}_{M, N}:\oplus_{g \in G}\underline{\rm{mod}}\mbox{-} \CA(M, {}^gN) \st{\sim}\rt \underline{\rm{Mod}}\mbox{-} \CA(M, \oplus_{g \in G}{}^gN).$$
	 As discussed in the above, the adjoint pair $(F_{\la}, F_{\bullet})$ yields the following
	  isomorphism 
	  $$\eta_{M, F_{\la}(N)}:\mmod \CB(F_{\la}(M), F_{\la}(N))\rt \Mod \CA (M, F_{\bullet}(F_{\la}(N))).$$
	  It can be check easily for any $f \in \mmod \CB(F_{\la}(M), F_{\la}(N))$, $\eta_{M, F_{\la}(N)}(f)$ is equal to the following composition 
	  $$M \st{i}\rt \oplus_{g \in G}{}^gM\st{\chi^{-1}_{M}}\rt F_{\bullet}(F_{\la}(M))\st{F_{\bullet}(f)}\rt F_{\bullet}(F_{\la}(N)), $$
	  where $i$ is the canonical injection. If $f$ factors through a projective object $Q$ in $\mmod \CB$, then by the above description  we observe that  $\eta_{M, F_{\la}(N)}(f)$ factors through the projective object $F_{\bullet}(Q)$. The fact gives rise to the following  surjective map
	  $$\underline{\eta_{M, F_{\la}(N)}}:\underline{\rm{mod}}\mbox{-} \CB(F_{\la}(M), F_{\la}(N))\rt \underline{\rm{Mod}}\mbox{-} \CA (M, F_{\bullet}(F_{\la}(N))).$$
Combining  all together the above introduced $k$-linear maps  yields the following commutative diagram  	  
	  \[\xymatrix{ \oplus_{g \in G}\underline{\rm{mod}}\mbox{-} \CA(M, {}^gN)\ar[rr]^{\underline{\Upsilon}_{M, N}} \ar[d]^{(\underline{F_{\la}})_{M, N}} && \underline{\rm{Mod}}\mbox{-} \CA(M, \oplus_{g \in G}{}^gN) \ar[d]^{\underline{\rm{Mod}}\mbox{-} \CA(M, \underline{\chi^{-1}_N})} \\ \underline{\rm{mod}}\mbox{-} \CB( \underline{F_{\la}}(M), \underline{F_{\la}}(N)) \ar[rr]^{\underline{\eta_{M, F_{\la}(N)}}} && \underline{\rm{Mod}}\mbox{-} \CA(M, F_{\bullet}(F_{\la}(N))}\]
	  The map $\chi_{N}$, so $\chi^{-1}_N$, is defined in the above (before the definition \ref{Definition 2.2}).  By the above diagram, $\underline{F_{\la}}_{M, N}$ is an isomorphism if and only if so is  $\underline{\eta_{M, F_{\la}(N)}}$. Therefore, it suffices  to show that $\underline{\eta_{M, F_{\la}(N)}}$ is injective.  Assume that $\underline{\eta_{M, F_{\la}(N)}}(\underline{f})=0$ for some  $f \in \mmod \CB(F_{\la}(M), F_{\la}(N))$. To this end, it is equivalent to show that $f$ factors through a projective object in $\mmod \CB.$ Similarly we have the following commutative diagram
	 \[\xymatrix{ \oplus_{g \in G}\rm{mod}\mbox{-} \CA(M, {}^gN)\ar[rr]^{\Upsilon_{M, N}} \ar[d]^{(F_{\la})_{M, N}} && \rm{Mod}\mbox{-} \CA(M, \oplus_{g \in G}{}^gN) \ar[d]^{\rm{Mod}\mbox{-} \CA(M, \chi^{-1}_N)} \\ \rm{mod}\mbox{-} \CB( F_{\la}(M), F_{\la}(N)) \ar[rr]^{\eta_{M, F_{\la}(N)}} && \rm{Mod}\mbox{-} \CA(M, F_{\bullet}(F_{\la}(N)).}\]
	  In fact, the first diagram in the above is induced by the second one. The commutativity of the above diagrams and using this fact that $(F_{\la})_{M, N}$ is an isomorphism  imply the existence of $u=(u_g)_{g\in G}$, $u_g:M \rt {}^gN$ such that $f=(F_{\la})_{M, N}(u)=\Sigma_{g \in G}\delta^F_g\circ F_{\la}(u_g)$ and each of $u_g$ factors through some projective object in $\mmod \CA$, namely, $u_g=p_g\circ q_g$, where $q_g:M \rt P_g, \ p_g:P_g\rt {}^gN$ and $P_g$ a projective object in $\mmod \CA.$ Now the  data provided in the above implies the following factorization of $f$
	
	  \[	  \xymatrix{
	  	F_{\la}(M) \ar[rr]^{f}
	  	\ar[rd]_{[F_{\la}(q_g)]^{\rm{t}}_{g \in G}}&&F_{\la}(N)\\
	  	& \oplus_{g \in G}F_{\la}(P_g)\ar[ru]_{\ \ \ \ [\delta^F_g\circ F_{\la}(p_g)]_{g \in G}}
	  }\]
	 
	 Since $F_{\la}(M)$ belongs to $\mmod \CB$, by the above factorization we get a factorization of $f$ through a direct sum of finitely many projective objects $F_{\la}(P_g)$, so desired claim.    We are done. 	  
\end{proof}

\section{Galois precovering for (Mono)morphism categories}\label{Subsection 3.1}
In this section, we will introduce  $G$-precoverings of the morphism categories and monomorphism categories which are  induced by the push-down functor $F_{\la}$. 

  Let $\mathbb{X}=(X\st{f}\rt Y)$ be an object in $\text{H}(\mmod \CA)$ and $g \in G.$ We denote by  ${}^g\mathbb{X}=({}^gX\st{{}^gf}\rt {}^gY)$ the object in $\text{H}(\mmod\CA)$ by acting $g$ competently-wise on $\mathbb{X}$. Assume a morphism $\phi=(\phi_1, \phi_2):\mathbb{X}\rt \mathbb{Y}$ in $\text{H}(\mmod \CA)$ is given. Define ${}^g\phi=({}^g\phi_1, {}^g\phi_2)$ a morphism in $\text{H}(\mmod \CA)$ from ${}^g\mathbb{X}$ to ${}^g\mathbb{Y}$ obtained by acting $g$ competently-wise on $\phi.$ We observe that $g$ induces an automorphism on $\text{H}(\mmod \CA)$. Denote again by $g$ the  automorphism on $\text{H}(\mmod \CA)$ induced by the automorphism $g:\CA \rt \CA.$ The set of all the induced automorphisms $g:\text{H}(\mmod \CA)\rt \text{H}(\mmod \CA)$ makes a group of automorphisms on $\rm{H}(\mmod \CA)$, which is isomorphic to $G$. We identify the new obtained group by the group $G$. Hence in this way one can define an action  of $G$ on $\text{H}(\mmod \CA)$. Since $g:\CA\rt \CA$ is an automorphism, $\phi$ is a monomorphism if and only if so is ${}^g\phi.$ Therefore, the action of $G$ on $\text{H}(\mmod \CA)$, already defined, can be restricted to  an action of $G$ on $\CS(\mmod \CA)$.\\
  
   Using the same idea (applying component-wisely a functor), we can extend the functor $F_{\la}:\mmod A\rt \mmod \CB$ to the functor $\text{H}\rm{F}_{\la}:\text{H}(\mmod \CA)\rt \text{H}(\mmod \CB)$, sending $ (X\st{f}\rt Y) \in \text{H}(\mmod \CA) $ to $(F_{\la}(X)\st{F_{\la}(f)}\rt F_{\la}(Y))$ in $\text{H}(\mmod \CB)$. Since $F_{\la}$ is exact, we can restrict $\text{H}\rm{F}_{\la}$
 to the corresponding monomorphism categories. Let us denote the restricted functor by $\CS \rm{F}_{\la}:\CS(\mmod \CA)\rt \CS(\mmod \CB)$.
Analog with  the above observation,  denote by $\text{H}\rm{F}_{\bullet}:\text{H}(\mmod\CB)\rt \text{H}(\Mod \CA)$ the functor  which is obtained by applying component-wisely the functor $F_{\bullet}:\mmod \CB\rt \Mod \CA$. Since the functor $F_{\bullet}$ is exact we get also the restricted functor $\CS \rm{F}_{\bullet}:\CS(\mmod \CB) \rt \CS(\Mod \CA)$.\\

\subsection*{Setup} From now on to the end of the paper, we assume that $G$ is torsion-free. Hence the action of $G$ on $\CA$ becomes admissible.
\begin{lemma}\label{Lemma 3.1}
	Assume that $\mathbb{X}\neq 0$ in $\rm{H}(\mmod\CA)$. If ${}^g\mathbb{X}\simeq \mathbb{X}$ for some $g \in G$, then $g=e$. 
	\end{lemma}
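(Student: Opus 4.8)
The statement is the analogue, for the morphism category, of the fact (used implicitly above) that $G$ acts freely on $\mathrm{ind}\mbox{-}\CA$ when $G$ is torsion-free. The plan is to reduce the claim for $\mathrm{H}(\mmod\CA)$ to the corresponding claim for $\mmod\CA$, exploiting the two projections. First I would observe that $\mathbb{X}=(X\st{f}\rt Y)\neq 0$ means $X\neq 0$ or $Y\neq 0$. Suppose $g\in G$ and ${}^g\mathbb{X}\simeq\mathbb{X}$ in $\mathrm{H}(\mmod\CA)$; an isomorphism is a pair $(\phi_1,\phi_2)$ with $\phi_1\colon {}^gX\xrightarrow{\sim}X$ and $\phi_2\colon {}^gY\xrightarrow{\sim}Y$ isomorphisms in $\mmod\CA$ making the square commute. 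In particular, whichever of $X$, $Y$ is nonzero, we obtain a nonzero $\CA$-module $Z$ (a summand of $X$ or of $Y$) with ${}^gZ\simeq Z$.

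The key step is then to pass from an arbitrary nonzero module with ${}^gZ\simeq Z$ to an \emph{indecomposable} one with the same property. Writing $Z=\bigoplus_{i=1}^{n}Z_i$ as a finite direct sum of indecomposables (possible since $\mmod\CA$ is Krull--Schmidt), the isomorphism ${}^gZ\simeq Z$ becomes $\bigoplus_i{}^gZ_i\simeq\bigoplus_iZ_i$, and by uniqueness of the Krull--Schmidt decomposition the automorphism $M\mapsto{}^gM$ of $\mmod\CA$ permutes the isoclasses $\{[Z_1],\dots,[Z_n]\}$; since this set is finite, some power $g^m$ (with $m\leqslant n!$, say $m$ the order of the induced permutation) fixes each $[Z_i]$, i.e. ${}^{g^m}Z_i\simeq Z_i$ for some indecomposable $Z_i$. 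Because $G$ is torsion-free the action on $\mathrm{ind}\mbox{-}\CA$ is admissible (this is exactly the fact recalled in the excerpt, citing \cite[Lemma 2.2]{BL}), so ${}^{g^m}Z_i\simeq Z_i$ forces $g^m=e$, and torsion-freeness then gives $g=e$.

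I would present this cleanly as: let $n$ be the number of indecomposable summands of $X\oplus Y$ counted without multiplicity; the cyclic group $\langle g\rangle$ acts on this finite set of isoclasses, so $\langle g\rangle$ has a subgroup of finite index fixing a point, hence $\langle g\rangle$ has a nontrivial finite quotient unless $g=e$; combined with $G$ torsion-free (so $\langle g\rangle\cong\Z$ when $g\neq e$, which has no nontrivial finite quotient acting with a fixed point that pulls back to identity) one concludes $g=e$. Concretely it is cleanest to just say: some power $g^{m}\neq e$ would fix an indecomposable summand up to isomorphism, contradicting admissibility, hence $g=e$.

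The only mild obstacle is bookkeeping the two components at once, but this is harmless: the argument only needs \emph{one} nonzero component, and any isomorphism in $\mathrm{H}(\mmod\CA)$ restricts to isomorphisms on both components, so no compatibility with $f$ is ever used. The substantive input — that torsion-free $G$ acts freely on $\mathrm{ind}\mbox{-}\CA$ — is already available in the excerpt, so the proof is short. (The same argument, verbatim, will handle the monomorphism category $\CS(\mmod\CA)$, since ${}^g$ restricts there and an isomorphism in $\CS(\mmod\CA)$ is in particular one in $\mathrm{H}(\mmod\CA)$.)
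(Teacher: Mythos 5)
Your proof is correct, but it takes a genuinely different route from the paper's. You reduce to a nonzero component $Z$ with ${}^gZ\simeq Z$, decompose $Z$ into indecomposables, let the induced permutation of the finitely many isoclasses of summands produce a power $g^m$ ($m\geqslant 1$) with ${}^{g^m}Z_i\simeq Z_i$ for an indecomposable $Z_i$, and then invoke the fact that a torsion-free $G$ acts freely on $\mathrm{ind}\mbox{-}\CA$ (the admissibility statement recalled in Section 2 from \cite[Lemma 2.2]{BL}) to get $g^m=e$, hence $g=e$. The paper instead argues directly on supports: from ${}^g\mathbb{X}\simeq\mathbb{X}$ it deduces ${}^{g^{-n}}X\simeq X$ for all $n>0$, picks $x$ with $X(x)\neq 0$, concludes $X(g^nx)\neq 0$ for every $n$, and observes that freeness of the $G$-action on the objects of $\CA$ together with torsion-freeness makes the objects $g^nx$ pairwise distinct, contradicting $\dim_k X<\infty$. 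So the paper's argument is more elementary and self-contained (no Krull--Schmidt uniqueness, no appeal to the admissibility lemma, only finite-dimensionality and the free action on objects), whereas yours outsources the substantive input to the cited lemma and is somewhat more conceptual; both are equally valid here since that lemma is available before Lemma 3.1. One small caveat: your parenthetical that $\langle g\rangle\cong\Z$ ``has no nontrivial finite quotient'' is wrong as stated ($\Z$ has many finite quotients), but this aside is harmless, because your concrete formulation --- a power $g^{m}\neq e$ would fix an indecomposable summand up to isomorphism, contradicting the free action on $\mathrm{ind}\mbox{-}\CA$ --- is exactly the correct step, and your remark that the same argument restricts verbatim to $\CS(\mmod\CA)$ matches the paper's use of the lemma.
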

\begin{proof}
Assume $\mathbb{X}=(X\st{f}\rt Y)$. Since $\mathbb{X}\neq 0$, so either $X\neq 0$ or $Y\neq 0.$	Assume $X\neq 0$. The proof of the case $Y\neq 0$ is similar. Assume to the contrary, $g \neq e.$ The isomorphism ${}^g\mathbb{X}\simeq \mathbb{X}$ yields ${}^{g^{-n}}X\simeq X$ for any $n >0.$ Since $X\neq 0$, there exists $x \in \CA$ such that $X(x)\neq 0$, By the isomorphisms ${}^{g^{-n}}X\simeq X$, we get  $X(g^nx)\neq 0$. According to  our assumption of $G$ being torsion free, we infer $X(a)\neq 0$ for infinitely many $a \in \CA$. It contradicts the assumption of $X$ being finite dimensional. So we are done.
\end{proof}
Thus the above lemma says that $G$ acts freely on $\rm{H}(\mmod\CA)$ and $\CS(\mmod \CA)$.\\

The following theorem shows that the functors $\rm{H}F_{\la}, \rm{H}F_{\bullet}$, resp. $\CS \rm{F}_{\la}, \CS \rm{F}_{\bullet}$, behave the same as the pull-up functor $F_{\bullet}$ and the push-down functor $F_{\la}$.
\begin{proposition}\label{Prop 3.2}
		Assume that $\mathbb{X}, \mathbb{X}_1, \mathbb{X}_2   \in \rm{H}(\mmod \CA)$.  The following assertions hold.
		\begin{itemize}
			\item [$(1)$] $\rm{HF}_{\la}({}^g\mathbb{X})\simeq \rm{H}\rm{F}_{\la}(\mathbb{X})$, for any $g \in G.$
			\item [$(2)$] There exists an isomorphism $$\rm{H}F_{\bullet}\circ \rm{H}F_{\la} (\mathbb{X})\simeq \oplus_{g \in G} {}^g\mathbb{X}$$
			\item [$(3)$] If $\mathbb{X}$ is indecomposable, then  so is $\rm{H}F_{\la}(\mathbb{X})$.
			\item [$(4)$] If objects $\mathbb{X}_1$ and $\mathbb{X}_2$ are indecomposable, then $\rm{H}F_{\la}(\mathbb{X}_1)\simeq \rm{H}F_{\la}(\mathbb{X}_2)$	implies $\mathbb{X}_1\simeq \mathbb{X}^g_2$,  for some $g \in G.$
			\end{itemize}
	In particular, the same statements $(1)-(4)$ hold for when the related objects and functors concern the corresponding monomorphism categories.	
\end{proposition}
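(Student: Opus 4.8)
The plan is to recognize the morphism categories as ordinary module categories over locally bounded $k$-categories, so that Theorem~\ref{Theorem 2.2} applies verbatim, and then to get the monomorphism statements by a purely formal closure argument. First I would set, for $\CE\in\{\CA,\CB\}$, the ``lower triangular'' $k$-category $T_2(\CE):=\CE\otimes_k kA_2$, having two copies $x_1,x_2$ of each object $x$ of $\CE$ and $T_2(\CE)(x_i,y_j)=\CE(x,y)\otimes_k(kA_2)(i,j)$, where $kA_2$ is the path category of $A_2:\bullet\rt\bullet$. One checks the elementary facts that $T_2(\CE)$ is again locally bounded and that sending $(X\st{f}\rt Y)$ to the $T_2(\CE)$-module carrying $X$ on the first copy, $Y$ on the second, and $f$ on the arrow of $A_2$ defines an equivalence $\rm{H}(\mmod \CE)\simeq\mmod T_2(\CE)$, which restricts to an equivalence between $\CS(\mmod \CE)$ and the full subcategory of $T_2(\CE)$-modules whose structure map on the arrow of $A_2$ is monic.

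Next I would transport all the data along $(-)\otimes_k kA_2$: the $G$-action on $\CA$ extends to a $k$-linear $G$-action on $T_2(\CA)$ that is free on objects and, under the equivalence above, corresponds to the $G$-action on $\rm{H}(\mmod \CA)$ fixed before Lemma~\ref{Lemma 3.1}; the functor $\widetilde F:=F\otimes_k kA_2\colon T_2(\CA)\rt T_2(\CB)$ is again Galois in the sense of Definition~\ref{Def 2.1}, its defining isomorphisms and surjectivity being obtained from those of $F$ by tensoring with the finite category $kA_2$; and, since $G$ is torsion-free, Lemma~\ref{Lemma 3.1} shows that $G$ acts freely on the indecomposables of $\mmod T_2(\CA)=\rm{H}(\mmod \CA)$, i.e.\ the action on $T_2(\CA)$ is admissible. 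Crucially, under the equivalences $\rm{H}(\mmod \CE)\simeq\mmod T_2(\CE)$ the componentwise functor $\rm{H}F_{\la}$ is identified with the push-down $\widetilde F_{\la}$ and $\rm{H}F_{\bullet}$ with the pull-up $\widetilde F_{\bullet}$, each being nothing but ``apply $F_{\la}$, resp.\ $F_{\bullet}$, on both copies''.

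With this dictionary, the four assertions for $\rm{H}$ are just the conclusions of Theorem~\ref{Theorem 2.2} for $\widetilde F$: statement $(1)$ is Theorem~\ref{Theorem 2.2}$(3)$; statement $(2)$ is the isomorphism $\widetilde F_{\bullet}\circ\widetilde F_{\la}(\mathbb X)\simeq\bigoplus_{g\in G}{}^g\mathbb X$ recalled from \cite{BG} before Definition~\ref{Definition 2.2}, whose naturality guarantees that the componentwise isomorphism is a genuine morphism in $\rm{H}(\Mod \CA)$; and statements $(3)$ and $(4)$ together are exactly Theorem~\ref{Theorem 2.2}$(4)$ (indecomposables go to indecomposables, and $\widetilde F_{\la}(\mathbb X_1)\simeq\widetilde F_{\la}(\mathbb X_2)$ with both indecomposable forces $\mathbb X_1\simeq{}^g\mathbb X_2$ for some $g\in G$). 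For the monomorphism version I would then observe that $\CS(\mmod \CE)$ is a \emph{full} subcategory of $\rm{H}(\mmod \CE)$ closed under the $G$-action (a $k$-linear automorphism of $\CE$ preserves monomorphisms), under finite direct sums, and under direct summands (a summand in $\rm{H}(\mmod \CE)$ of an object with monic structure map again has monic structure map); hence ``indecomposable'' and ``isomorphic'' mean the same in $\CS$ and in $\rm{H}$ for objects of $\CS$. Since $F_{\la}$ is exact, $\CS F_{\la}$ and $\CS F_{\bullet}$ are literally the restrictions of $\rm{H}F_{\la}$ and $\rm{H}F_{\bullet}$, and $\bigoplus_{g\in G}{}^g\mathbb X$ lies in $\CS(\Mod \CA)$ whenever $\mathbb X\in\CS(\mmod \CA)$ because an arbitrary direct sum of monomorphisms is a monomorphism; so $(1)$--$(4)$ restrict verbatim.

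The genuine work is concentrated in the second paragraph: one must check (routinely, though not wholly automatically) that passing from $\CE$ to $T_2(\CE)$ and from $F$ to $\widetilde F$ preserves every hypothesis of Theorem~\ref{Theorem 2.2} — local boundedness, freeness on objects, admissibility, the Galois property — and that the equivalence $\rm{H}(\mmod \CE)\simeq\mmod T_2(\CE)$ really does intertwine $\rm{H}F_{\la}$ with $\widetilde F_{\la}$, $\rm{H}F_{\bullet}$ with $\widetilde F_{\bullet}$, and the two $G$-actions; once that bookkeeping is in place the proposition is immediate. If one prefers to avoid the matrix category, $(1)$ and $(2)$ can instead be proved directly by applying the natural isomorphisms $\delta^F_g\colon F_{\la}\circ g\rt F_{\la}$ and $\chi\colon F_{\bullet}\circ F_{\la}\rt\bigoplus_{g}{}^g(-)$ componentwise — naturality supplying the commuting squares that promote the componentwise maps to morphisms in $\rm{H}$ — after which $(3)$ and $(4)$ follow from $\rm{H}F_{\la}$ being a $G$-precovering exactly as Theorem~\ref{Theorem 2.2}$(4)$ follows from Theorem~\ref{Theorem 2.2}$(1)$.
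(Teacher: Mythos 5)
Your proposal is correct, but it takes a genuinely different route from the paper. The paper argues entirely inside the morphism category: for $(1)$ and $(2)$ it exhibits the explicit componentwise isomorphisms $(\delta^F_{g,X},\delta^F_{g,Y})$ and $\chi_{\mathbb{X}}=(\chi_X,\chi_Y)$ and checks by hand that they are morphisms in $\rm{H}(\mmod \CB)$, resp. $\rm{H}(\mmod \CA)$ (this is exactly the ``alternative'' you sketch in your last paragraph), and then it proves $(3)$ and $(4)$ directly by a Krull--Schmidt orbit argument: writing $\rm{H}F_{\bullet}(C)=\oplus_{h\in V}{}^{h}\mathbb{X}$ for a summand $C$, using ${}^{g}F_{\bullet}=F_{\bullet}$ and Lemma~\ref{Lemma 3.1} to force $V=G$. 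You instead identify $\rm{H}(\mmod \CE)$ with $\mmod T_2(\CE)$ for $T_2(\CE)=\CE\otimes_k kA_2$, check that $F\otimes_k kA_2$ is again a Galois covering in the sense of Definition~\ref{Def 2.1} with a free action on objects that is admissible (torsion-freeness, via Lemma~\ref{Lemma 3.1} or \cite[Lemma 2.2]{BL} applied to $T_2(\CA)$), verify that the equivalence intertwines $\rm{H}F_{\la}$, $\rm{H}F_{\bullet}$ and the $G$-actions with $\widetilde F_{\la}$, $\widetilde F_{\bullet}$, and then quote Theorem~\ref{Theorem 2.2} wholesale; this is legitimate, and all the transport-of-structure checks you flag do go through (with the natural choice of representatives $x_{(a,i)}=(x_a,i)$ the identification is essentially on the nose). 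What your reduction buys is economy and conceptual clarity -- the proposition becomes a formal instance of the module-category case, with no need to redo the Krull--Schmidt argument -- though note that $(3)$ and $(4)$ then rest on the external inputs behind Theorem~\ref{Theorem 2.2}(4) (\cite[Lemma 2.9]{BL}, \cite[Proposition 5.4]{K}) rather than on a self-contained argument. What the paper's direct route buys is that the explicit morphisms $(\delta^F_{g,X},\delta^F_{g,Y})$ and $\chi_{\mathbb{X}}$ are constructed and verified inside $\rm{H}$, which is immediately reused (the $G$-stabilizer $\delta^{HF}$ in the proof of Theorem~\ref{Theorem 3.2} is built from precisely these components), and it avoids introducing $T_2(\CE)$ altogether. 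Your treatment of the monomorphism case (fullness, closure under summands and the $G$-action, exactness of $F_{\la}$, direct sums of monomorphisms being monomorphisms) coincides with the paper's one-line restriction argument.
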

\begin{proof}
 	$(1)$ Assume $\mathbb{X}=(X\st{f}\rt Y)$. Consider the pair $(\delta^F_{g, X}, \delta^F_{g, Y})$, see Section \ref{Perliminary} for the definition. It is a morphism in $\rm{H}(\mmod \CB)$. For this we need to show   the commutativity of the following diagram  for any $a \in \rm{ob}(\CB)$
 	\[\xymatrix{F_{\la}({}^gX)(a)\ar[rr]^{F_{\la}({}^gf)(a)} \ar[d]^{\delta^F_{g, X, a}} && F_{\la}({}^gY)(a) \ar[d]^{\delta^F_{g, Y, a}} \\ F_{\la}(X)(a) \ar[rr]^{F_{\la}(f)(a)} && F_{\la}(Y)(a) }\]
 The both composition functors $\delta^F_{g, Y, a}\circ F_{\la}({}^gf)(a)$ and $F_{\la}(f)(a) \circ \delta^F_{g, X, a}$, by following the definitions of the involved functors, send,  for any $h \in G$,  the summand $X(g^{-1}hx_a)$	at position $h$ of $F_{\la}({}^gX)(a)=\oplus_{h \in G}X(g^{-1}hx_a)$ by the morphism $f_{g^{-1}hx_a}$ to the summand $Y(g^{-1}hx_a)$ of $F_{\la}(Y)(a)=\oplus_{h \in G}Y(hx_a)$ at the position $g^{-1}h$. Hence we get  the required commutativity. 	
 	$(2)$ To prove it, consider the following isomorphisms
\begin{align*}
\rm{H}F_{\bullet}\circ \rm{H}F_{\la} (\mathbb{X})&\simeq (F_{\bullet}\circ F_{\la}(A)\st{F_{\bullet}\circ F_{\la}(f)}\lrt F_{\bullet}\circ F_{\la}(B)) \\
&\simeq ( \oplus_{g \in G} {}^gA\st{\oplus_{g \in G}{}^gf}\lrt \oplus_{g \in G} {}^gB) \\
& \simeq \oplus_{g \in G}({}^gA \st{{}^gf}\rt {}^gB)\\& \simeq \oplus_{g \in G}{}^g\mathbb{X}.
\end{align*}
The second isomorphism follows from the canonical isomorphism $F_{\bullet}\circ F_{\la}(C)\simeq \oplus_{g \in G} {}^gC$, for any $C \in \CA.$ In fact, $\chi_{\mathbb{X}}=(\chi_{X}, \chi_Y)$ gives the natural isomorphism $
\text{H}\text{F}_{\bullet}\circ \text{H}\text{F}_{\la}(\mathbb{X})\simeq \oplus_{g \in G}{}^g\mathbb{X}$. The definitions of $\chi_{X}, \chi_{Y}$ are given in Section \ref{Perliminary}, and a direct computation shows that $\chi_{\mathbb{X}}$ is a morphism in $\text{H}(\mmod \CA)$.   $(3)$ Assume that $\text{H}\rm{F}_{\la}(\mathbb{X})=C\oplus C'$ and $C\neq 0.$ 
By the part $(2)$ we have $\text{H}\rm{F}_{\bullet}\circ \rm{H}F_{\la} (\mathbb{X})\simeq \oplus_{g \in G} {}^g\mathbb{X}\simeq  \text{H}F_{\bullet}(C)\oplus \text{H}F_{\bullet}(C')$. Since by Lemma \ref{Lemma 3.1}, ${}^g\mathbb{X}$ are pairwise not isomorphic, we observe that $\text{H}\rm{F}_{\bullet}(C)=\oplus_{h \in V}{}^h\mathbb{X} \ $ for some subset  $V \subseteq G.$ By the property $F \circ g=F$, for any $g \in G$, of the Galois covering functor $F$, we infer that ${}^gF_{\bullet}(M)=F_{\bullet}(M)$ for each $M$ in $\mmod \CB$. This fact follows that  $\text{H}\rm{F}_{\bullet}(C)={}^g\text{H}\rm{F}_{\bullet}(C)\simeq \oplus_{h \in V}{}^{gh}C$ for each $g \in G$. By the Krull-Schmidt property, for each $g \in G$ and $h \in V$ there is $h' \in V$ such that ${}^{gh}\mathbb{X}\simeq {}^{h'}\mathbb{X}$.
Thanks to  Lemma \ref{Lemma 3.1}, $h'=gh$. Hence $V=gV$ for each $g \in G$, so $V=G$ and $C'=0.$ $(4)$ Assume $\text{H}\text{F}_{\la}(\mathbb{X}_1)\simeq \text{H}\rm{F}_{\la}(\mathbb{X}_2)$. Applying the functor $\rm{H}\rm{F}_{\bullet}$ on the the isomorphism and
in view of the part $(2)$ we get $$\oplus_{g \in G}{}^g\mathbb{X}_1\simeq \rm{H}\rm{F}_{\bullet}\circ \rm{H}\rm{F}_{\la}(\mathbb{X}_1)\simeq HF_{\bullet}\circ HF_{\la}(\mathbb{X}_2)\simeq \oplus_{g \in G}{}^g\mathbb{X}_2.$$ Hence $\mathbb{X}_1$ is a  summand of $\oplus_{g \in G}{}^g\mathbb{X}_2.$
The Krull-Schmidt property of $\rm{H}(\mmod \CA)$ implies  the existence of $g \in G$ such that $\mathbb{X}_1\simeq {}^g\mathbb{X}_2$, as desired. The  parts concerning monomorphism categories  follow from the morphism case since the  related functors $\CS \rm{F}_{\la}$ and $\CS \rm{F}_{\bullet}$ are restrictions of the functors $\text{H}\rm{F}_{\la}$ and $\text{H}\rm{F}_{\bullet}$.
\end{proof}
\begin{lemma}\label{Lemma 3.3}
	Let $\mathbb{X}$ be in $\rm{H}(\mmod \CA)$ and $\mathbb{M}$ in $\rm{H}(\mmod \CB).$ Then there exists the following natural isomorphisms  of $k$-vector spaces	
	$$\xi_{\mathbb{X}, \mathbb{M}}:\rm{H}(\mmod \CB)(\rm{H}F_{\la}(\mathbb{X}), \mathbb{M}) \rt \rm{H}(\Mod \CA)(\mathbb{X}, \rm{H}F_{\bullet}(\mathbb{M})).$$
In particular, there is the same isomorphism as above for the functors $\CS \rm{F}_{\la}$ and $\CS \rm{F}_{\bullet}$ when $\mathbb{X} \in \CS(\mmod \CA)$ and $\mathbb{M} \in \CS(\mmod \CB).$
\end{lemma}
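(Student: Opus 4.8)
The plan is to build $\xi_{\mathbb{X},\mathbb{M}}$ componentwise out of the adjunction isomorphism $\eta_{-,-}$ of the pair $(F_{\la},F_{\bullet})$ recalled in Section~\ref{Perliminary}, and then to check that the commuting-square condition defining morphisms in the morphism categories is transported correctly. Write $\mathbb{X}=(X\st{f}\rt Y)$ in $\rm{H}(\mmod\CA)$ and $\mathbb{M}=(M\st{m}\rt N)$ in $\rm{H}(\mmod\CB)$, so that $\rm{H}F_{\la}(\mathbb{X})=(F_{\la}X\st{F_{\la}f}\rt F_{\la}Y)$ and $\rm{H}F_{\bullet}(\mathbb{M})=(F_{\bullet}M\st{F_{\bullet}m}\rt F_{\bullet}N)$. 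Then a morphism $\rm{H}F_{\la}(\mathbb{X})\rt\mathbb{M}$ is a pair $(\alpha,\beta)$ with $\alpha\in\mmod\CB(F_{\la}X,M)$, $\beta\in\mmod\CB(F_{\la}Y,N)$ and $m\circ\alpha=\beta\circ F_{\la}f$, while a morphism $\mathbb{X}\rt\rm{H}F_{\bullet}(\mathbb{M})$ is a pair $(a,b)$ with $a\in\Mod\CA(X,F_{\bullet}M)$, $b\in\Mod\CA(Y,F_{\bullet}N)$ and $F_{\bullet}m\circ a=b\circ f$. I would simply set $\xi_{\mathbb{X},\mathbb{M}}(\alpha,\beta):=(\eta_{X,M}(\alpha),\eta_{Y,N}(\beta))$, using the explicit $\eta$ (and its explicit inverse) described in Section~\ref{Perliminary}.

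First I would verify that this is well defined, i.e.\ that $(\eta_{X,M}(\alpha),\eta_{Y,N}(\beta))$ really is a morphism in $\rm{H}(\Mod\CA)$; this is where naturality of $\eta$ in both variables enters. Naturality in the second (covariant) variable applied to $m\colon M\rt N$ gives $\eta_{X,N}(m\circ\alpha)=F_{\bullet}m\circ\eta_{X,M}(\alpha)$, and naturality in the first (contravariant) variable applied to $f\colon X\rt Y$ — recalling that $F_{\la}$ is a functor, so it sends $f$ to $F_{\la}f$ — gives $\eta_{X,N}(\beta\circ F_{\la}f)=\eta_{Y,N}(\beta)\circ f$. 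Since $m\circ\alpha=\beta\circ F_{\la}f$, these two identities combine to $F_{\bullet}m\circ\eta_{X,M}(\alpha)=\eta_{Y,N}(\beta)\circ f$, which is exactly the required square condition. Bijectivity of $\xi_{\mathbb{X},\mathbb{M}}$ is then immediate: each $\eta_{X,M}$ and $\eta_{Y,N}$ is bijective, and the very same naturality computation shows that $(a,b)\mapsto(\eta^{-1}_{X,M}(a),\eta^{-1}_{Y,N}(b))$ carries morphisms in $\rm{H}(\Mod\CA)$ back to morphisms in $\rm{H}(\mmod\CB)$, which provides a two-sided inverse. Naturality of $\xi$ in $\mathbb{X}$ and in $\mathbb{M}$ follows coordinatewise from that of $\eta_{-,-}$.

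For the monomorphism-category statement I would use that $\CS(\mmod\CA)$, resp.\ $\CS(\Mod\CA)$, is a full subcategory of $\rm{H}(\mmod\CA)$, resp.\ $\rm{H}(\Mod\CA)$, and that $F_{\bullet}=(-)\circ F^{\op}$ is exact, so that $\rm{H}F_{\bullet}$ carries $\CS(\mmod\CB)$ into $\CS(\Mod\CA)$; in particular $\rm{H}F_{\bullet}(\mathbb{M})\in\CS(\Mod\CA)$ whenever $\mathbb{M}\in\CS(\mmod\CB)$. Since Hom-spaces in a full subcategory agree with those in the ambient category, $\xi_{\mathbb{X},\mathbb{M}}$ restricts, for $\mathbb{X}\in\CS(\mmod\CA)$ and $\mathbb{M}\in\CS(\mmod\CB)$, to the desired natural isomorphism $\CS(\mmod\CB)(\CS F_{\la}(\mathbb{X}),\mathbb{M})\rt\CS(\Mod\CA)(\mathbb{X},\CS F_{\bullet}(\mathbb{M}))$. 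The only point requiring genuine care — and hence the main obstacle — is the bookkeeping in the well-definedness step: making sure the two applications of naturality of $\eta$ are taken with respect to the correct morphisms and in the correct variances, so that the two equalities really do compose into the single commuting square; everything else is formal.
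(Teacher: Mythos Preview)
Your proposal is correct and follows exactly the same approach as the paper: define $\xi_{\mathbb{X},\mathbb{M}}$ componentwise via the adjunction isomorphisms $\eta_{X,M}$ and $\eta_{Y,N}$, exhibit the inverse in the same way, and restrict to the full subcategories $\CS(-)$ for the monomorphism statement. In fact you give more detail than the paper does: where the paper writes that well-definedness follows ``by a direct computation by following the definitions of $\eta_{-,-}, F_{\bullet}$ and $F_{\la}$'', you spell out the two instances of naturality of $\eta$ that make the commuting square go through.
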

\begin{proof}
Assume $\mathbb{X}=(X\st{f}\rt Y)$  and $\mathbb{M}=(M\st{h}\rt N)$.  Based on the adjoint isomorphism of the adjoint pair $(F_{\la}, F_{\bullet})$, introduced in Section \ref{Perliminary}, we define the following $k$-linear map 
$$\xi_{\mathbb{X}, \mathbb{M}}:\rm{H}(\mmod \CB)(\rm{H}F_{\la}(\mathbb{X}), \mathbb{M}) \rt \rm{H}(\Mod \CA)(\mathbb{X}, \rm{H}F_{\bullet}(\mathbb{M}))$$
by sending $\xi=(\xi_1, \xi_2)$ in  $\rm{H}(\mmod \CB)(HF_{\la}(\mathbb{X}), \mathbb{M})$ to $$\eta=(\eta_{X, M}(\xi_1), \eta_{Y, N}(\xi_2))$$ in $\rm{H}(\Mod \CA)(\mathbb{X}, HF_{\bullet}(\mathbb{M})).$ By a direct computation and following the definitions of $\eta_{-,-}, F_{\bullet}$ and $F_{\la}$, one can show that $\eta$ is a morphism in $\rm{H}(\mmod \CB)$. Similarly, one can define the $k$-linear map
$$\xi'_{\mathbb{X}, \mathbb{M}}:\rm{H}(\Mod \CA)(\mathbb{X}, HF_{\bullet}(\mathbb{M}))\rt \rm{H}(\mmod \CB)(HF_{\la}(\mathbb{X}), \mathbb{M})$$
by sending $\eta=(\eta_1, \eta_2)$ in  $\rm{H}(\Mod \CA)(\mathbb{X}, HF_{\bullet}(\mathbb{M}))$  to

$$\xi=(\eta^{-1}_{X, M}(\eta_1), \eta^{-1}_{Y, N}(\eta_1))$$ in $\rm{H}(\mmod \CB)(HF_{\la}(\mathbb{X}), \mathbb{M})$. It is not difficult to see that $\xi_{\mathbb{X}, \mathbb{M}}$ and $\xi'_{\mathbb{X}, \mathbb{M}}$ are mutually inverse. Hence $\xi_{\mathbb{X}, \mathbb{M}}$ is an  isomorphism, so the  desired result.
\end{proof}
According to  the proof of the above lemma, we have the adjoint pair $(\rm{H}F_{\la}, \rm{H}F_{\bullet})$ if from the beginning we define $\rm{H}F_{\la}$ and $\rm{H}F_{\bullet}$ on the morphism categories of $\Mod \CA$ and $\Mod \CB.$

\begin{theorem}\label{Theorem 3.2}
Assume that  $\CA$ is a locally support-finite $k$-category, $G$  a torsion-free group of $k$-linear automorphisms of $\CA$, and further $F:\CA \rt \CB$ a Galois  covering (in the sense of Definition \ref{Def 2.1}). Then
\begin{itemize}
	\item [$(1)$] The functor $\rm{H}F_{\la}:\rm{H}(\mmod \CA)\rt \rm{H}(\mmod \CB)$ is a $G$-precovering.
	\item [$(2)$] The functor $\CS\rm{F}_{\la}:\CS(\mmod \CA)\rt \CS(\mmod \CB)$ is a  $G$-precovering.
\end{itemize}
\end{theorem}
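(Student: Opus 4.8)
The plan is first to reduce $(2)$ to $(1)$, then to prove $(1)$ by imitating the proof of Proposition \ref{Prop 2.6}. For the reduction: by construction $\CS F_{\la}$ and $\CS F_{\bullet}$ are the restrictions of $\rm{H}F_{\la}$ and $\rm{H}F_{\bullet}$ to the full subcategory $\CS(\mmod\CA)\subseteq\rm{H}(\mmod\CA)$, and the $G$-action restricts to $\CS(\mmod\CA)$ (as observed after Lemma \ref{Lemma 3.1}); hence for $\mathbb{X},\mathbb{Y}\in\CS(\mmod\CA)$ the relevant Hom-spaces and the comparison map of Definition \ref{definition 2.4} for $\CS F_{\la}$ are literally those for $\rm{H}F_{\la}$, so $(2)$ follows from $(1)$.

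To prove $(1)$ I would first fix a $G$-stabilizer for $\rm{H}F_{\la}$: for $h\in G$ and $\mathbb{X}=(X_1\st{f}\rt X_2)\in\rm{H}(\mmod\CA)$ put $\delta^{\rm{H}F}_{h,\mathbb{X}}:=(\delta^F_{h,X_1},\delta^F_{h,X_2})$. The square verified in the proof of Proposition \ref{Prop 3.2}$(1)$ shows this is a morphism $\rm{H}F_{\la}({}^h\mathbb{X})\rt\rm{H}F_{\la}(\mathbb{X})$ in $\rm{H}(\mmod\CB)$; it is visibly a natural isomorphism in $\mathbb{X}$, and since $\delta^F=(\delta^F_h)_{h\in G}$ is a $G$-stabilizer for $F_{\la}$ the cocycle identity $\delta^{\rm{H}F}_{g,\mathbb{X}}\circ\delta^{\rm{H}F}_{h,g\mathbb{X}}=\delta^{\rm{H}F}_{hg,\mathbb{X}}$ is inherited component-wise. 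It then suffices to show that, for all $\mathbb{X},\mathbb{Y}\in\rm{H}(\mmod\CA)$, the attached map
$$(\rm{H}F_{\la})_{\mathbb{X},\mathbb{Y}}:\bigoplus_{h\in G}\rm{H}(\mmod\CA)(\mathbb{X},{}^h\mathbb{Y})\rt\rm{H}(\mmod\CB)(\rm{H}F_{\la}(\mathbb{X}),\rm{H}F_{\la}(\mathbb{Y})),\qquad(u_h)_h\mapsto\Sigma_{h\in G}\delta^{\rm{H}F}_{h,\mathbb{Y}}\circ\rm{H}F_{\la}(u_h)$$
is an isomorphism. Since the components $X_1,X_2$ of $\mathbb{X}$ are finite-dimensional, hence finitely presented, $\CA$-modules, $\rm{H}(\Mod\CA)(\mathbb{X},-)$ commutes with coproducts, so there is a canonical isomorphism $\Upsilon_{\mathbb{X},\mathbb{Y}}:\bigoplus_{g\in G}\rm{H}(\mmod\CA)(\mathbb{X},{}^g\mathbb{Y})\st{\sim}\rt\rm{H}(\Mod\CA)(\mathbb{X},\oplus_{g\in G}{}^g\mathbb{Y})$. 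Using this, the adjunction isomorphism $\xi_{\mathbb{X},\rm{H}F_{\la}(\mathbb{Y})}$ of Lemma \ref{Lemma 3.3} (natural in its second argument), and the isomorphism $\chi_{\mathbb{Y}}=(\chi_{Y_1},\chi_{Y_2}):\rm{H}F_{\bullet}\circ\rm{H}F_{\la}(\mathbb{Y})\st{\sim}\rt\oplus_{g\in G}{}^g\mathbb{Y}$ of Proposition \ref{Prop 3.2}$(2)$ (for $\mathbb{Y}=(Y_1\st{f'}\rt Y_2)$), one assembles --- exactly as in the proof of Proposition \ref{Prop 2.6} --- the square
\[\xymatrix@C=2.6em{\bigoplus_{g\in G}\rm{H}(\mmod\CA)(\mathbb{X},{}^g\mathbb{Y})\ar[r]^-{\Upsilon_{\mathbb{X},\mathbb{Y}}}\ar[d]_{(\rm{H}F_{\la})_{\mathbb{X},\mathbb{Y}}}&\rm{H}(\Mod\CA)(\mathbb{X},\oplus_{g\in G}{}^g\mathbb{Y})\ar[d]^{\rm{H}(\Mod\CA)(\mathbb{X},\chi^{-1}_{\mathbb{Y}})}\\\rm{H}(\mmod\CB)(\rm{H}F_{\la}(\mathbb{X}),\rm{H}F_{\la}(\mathbb{Y}))\ar[r]^-{\xi_{\mathbb{X},\rm{H}F_{\la}(\mathbb{Y})}}&\rm{H}(\Mod\CA)(\mathbb{X},\rm{H}F_{\bullet}\circ\rm{H}F_{\la}(\mathbb{Y})).}\]
Its top arrow, its right arrow and --- by Lemma \ref{Lemma 3.3} --- its bottom arrow are isomorphisms, so once the square is shown to commute, $(\rm{H}F_{\la})_{\mathbb{X},\mathbb{Y}}$ is an isomorphism and $(1)$ follows.

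The step I expect to demand the most care is the commutativity of this square, although it is essentially bookkeeping: every functor and natural transformation involved is obtained by applying the corresponding module-level gadget separately to the source and target components. Evaluating the square slot by slot, and using $\delta^{\rm{H}F}_{h,\mathbb{Y}}=(\delta^F_{h,Y_1},\delta^F_{h,Y_2})$, the component-wise description of $\xi_{\mathbb{X},\mathbb{M}}$ from the proof of Lemma \ref{Lemma 3.3}, and the definitions of $\chi_{\mathbb{Y}}$ and $\Upsilon_{\mathbb{X},\mathbb{Y}}$, both composites restrict in each slot $i\in\{1,2\}$ to $\eta_{X_i,F_{\la}(Y_i)}\circ(F_{\la})_{X_i,Y_i}$ and to $\Mod\CA(X_i,\chi^{-1}_{Y_i})\circ\Upsilon_{X_i,Y_i}$ respectively, which agree by the identity $(F_{\la})_{X_i,Y_i}=\nu_{X_i,Y_i}=\eta^{-1}_{X_i,F_{\la}(Y_i)}\circ\Mod\CA(X_i,\chi^{-1}_{Y_i})\circ\Upsilon_{X_i,Y_i}$ recorded in Section \ref{Perliminary}. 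The one point needing attention is to carry the twist $\delta^F_{h,-}$ through both slots simultaneously and to confirm that the pairs so produced are honest morphisms in $\rm{H}(\mmod\CB)$ --- which is precisely the compatibility already checked in the proof of Proposition \ref{Prop 3.2}$(1)$. For $(2)$ one finally restricts the whole square to $\mathbb{X},\mathbb{Y}\in\CS(\mmod\CA)$, observing that $\oplus_{g\in G}{}^g\mathbb{Y}$ and $\rm{H}F_{\bullet}\circ\rm{H}F_{\la}(\mathbb{Y})$ stay in $\CS(\Mod\CA)$ since $F_{\la}$ and $F_{\bullet}$ are exact and $G$ acts by exact autoequivalences.
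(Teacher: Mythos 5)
Your proposal is correct and takes essentially the same approach as the paper: the component-wise $G$-stabilizer $\delta^{\rm{H}F}_{h,\mathbb{X}}=(\delta^F_{h,X_1},\delta^F_{h,X_2})$ and the identification of $(\rm{H}F_{\la})_{\mathbb{X},\mathbb{Y}}$ with the composite of $\mathbf{\Upsilon}_{\mathbb{X},\mathbb{Y}}$, the map induced by $\chi^{-1}_{\mathbb{Y}}$, and the adjunction isomorphism of Lemma \ref{Lemma 3.3} is precisely the map $\omega_{\mathbb{X},\mathbb{Y}}$ used in the paper's proof, verified there (as by you) by reducing to the module-level identity $(F_{\la})_{X,Y}=\nu_{X,Y}$. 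Your explicit reduction of $(2)$ to $(1)$ via fullness of $\CS(\mmod \CA)$ and $G$-stability of this subcategory is a minor, correct supplement to the paper's argument, which spells out only the morphism-category case.
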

\begin{proof}
With help of $G$-stabilizer $\delta^F=(\delta^F_h)_{h \in G}$, explained in Section \ref{Perliminary}, for the $G$-precovering $F_{\la}$, we construct a $G$-stabilizer $\delta^{HF}=(\delta^{HF}_{h})_{h \in G}$ for the functor $\rm{H}F_{\la}$. For each $\mathbb{X}=(X\st{f}\rt Y)$ in $\rm{H}(\mmod \CA)$ and $h \in G$, we define the natural transformation $\delta^{HF}_h=(\delta^{HF}_{h, \mathbb{X}})$, where $\delta^{HF}_{h, \mathbb{X}}:\rm{H}F_{\la}\circ h (\mathbb{X})\rt HF_{\la}(\mathbb{X})$ is the morphism  $(\delta^F_{h, X}, \delta^F_{h, Y})$ in $\rm{H}(\mmod \CB)$. Indeed, the naturality of $\delta^F_{h}$ shows the pair $(\delta^F_{h, X}, \delta^F_{h, Y})$ is a morphism in $\rm{H}(\mmod \CB)$, so our definition is well-defined. It is not really difficult by following the definitions to show that $\delta^{HF}$ is a $G$-stabilizer for the functor $\rm{H}F_{\la}$. Consider the following composite $\omega_{\mathbb{X}, \mathbb{Y}}$ for every $\mathbb{X}=(X\st{f}\rt Y)$ and $\mathbb{Y}=(V\st{s}\rt W)$ in $\rm{H}(\mmod \CA)$

\[\xymatrix{ \oplus_{g \in G}\rm{H}(\CA)(\mathbb{X}, {}^g\mathbb{Y}) \ar[r]^>>>>>>{\mathbf{\Upsilon}_{\mathbb{X}, \mathbb{Y}}}  & \rm{H}(\Mod \CA)(\mathbb{X}, \oplus_{g \in G}{}^g\mathbb{Y}) \ar[rr]^>>>>>>>>>{\rm{H}(\CA)(\mathbb{X}, \varepsilon_{\mathbb{X}, \mathbb{Y}})}  &   & \rm{H}(\CA)(\mathbb{X}, \rm{H}F_{\bullet}\circ \rm{H}F_{\la}(\mathbb{Y}))\ar[d]^{\xi_{\mathbb{X}, \rm{H}F_{\la}(\mathbb{Y})}} \\
	 &  &   & \rm{H}(\CB)(\rm{H}F_{\la}(\mathbb{X}), \rm{H}F_{\la}(\mathbb{Y})).}\]
For saving the space  in the above diagram, we denote $\rm{H}(\mmod \CA)$ and $\rm{H}(\mmod \CB)$ by $\rm{H}(\CA)$
and $\rm{H}(\CB)$, respectively. Let us introduce the $k$-linear maps appeared in the above diagram based on the  $k$-linear maps established in Section \ref{Perliminary}. Firstly, $$\mathbf{\Upsilon}_{\mathbb{X}, \mathbb{Y}}:\oplus_{g \in G}\rm{H}(\mmod \CA)(\mathbb{X}, {}^g\mathbb{Y})\rt \rm{H}(\mmod \CA)(\mathbb{X}, \oplus_{g \in G}{}^g\mathbb{Y})$$ is the natural isomorphism. More precisely, for $u=(u_g)_{g \in G}$ in $\oplus_{g \in G}\rm{H}(\mmod\CA)(\mathbb{X}, {}^g\mathbb{Y})$, $u_g=(v_g, w_g): \mathbb{X}\rt {}^g\mathbb{Y}$, $$\mathbf{\Upsilon}_{\mathbb{X},\mathbb{Y}}(u):=(\Upsilon_{X,V}(v), \Upsilon_{Y, W}(w))$$
where $v=(v_g)_{g \in G}$ and $w=(w_g)_{g \in G}$. Secondly, $\varepsilon_{\mathbb{X}, \mathbb{Y}}$ is the isomorphism $(\chi^{-1}_X, \chi^{-1}_Y)$ in $\rm{H}(\mmod \CA)$. The definition of $\chi^{-1}$ is given in Section \ref{Perliminary}, and moreover the naturality of the natural transformation $\chi^{-1}$ implies that the pair $(\chi^{-1}_X, \chi^{-1}_Y)$ is a morphism  in $\rm{H}(\mmod \CA)$. In view of Lemma \ref{Lemma 3.3}, one can deduce that $\omega_{\mathbb{X}, \mathbb{Y}}$ is an isomorphism. Since our definition of $\omega_{\mathbb{X}, \mathbb{Y}}$ is a component-wise extension of $k$-linear map $\nu_{-,-}$, see Section \ref{Perliminary}, we easily observe the associated $k$-linear map $(\rm{H}F_{\la})_{\mathbb{X}, \mathbb{Y}}$, see Definition \ref{definition 2.4}, with respect to $G$-stabilizer $\delta^{HF}$ is equal to $\omega_{\mathbb{X}, \mathbb{Y}}$. So we are done.
\end{proof}

\section{Galois precovering for functor categories}\label{Subsection 3.2}
The main purpose of this section is to explain that how the `push-down' functor $\rm{H}F_{\la}:\rm{H}(\mmod \CA)\rt \rm{H}(\mmod \CB)$ defined in the previous section is related to the functor  $\Phi:\CF(\mmod \CA)\rt \CF(\mmod \CB)$ defined in Section 5 of \cite{P}.  This functor plays an essential role in \cite{P} to prove stability of the Krull-Gabriel dimension under Galois coverings.  Also the same observation will be carried out for the functor $\CS \rm{F}_{\la}$ and an restriction of $\Phi.$
\subsection*{Setup} From now on to the end of the paper, we assume that $\CA$ is locally support-finite. So Theorem \ref{Theorem 2.2} guarantees that the push-down $F_{\la}$ is dense.\\

	 Let us briefly recall the definition of $\Phi.$ Denote by $\rm{Add}(\mmod \CA)$ the full subcategory of $\Mod \CA$ whose objects are arbitrary direct sums of  objects of $\mmod \CA$. Assume that a natural transformation  $f:\oplus_{j \in J}M_j\rt \oplus_{i \in I}N_i$ is defined  by morphisms $f_{ij}:M_j\rt N_i$ in $\mmod \CA$ for $i \in I, j \in J.$ Let $T \in \CF(\mmod \CA)$. The functor $\widehat{T}:(\rm{Add}(\mmod \CA))^{\rm{op}}\rt \Mod k$ is defined by the following way. Take an object $\oplus_{j \in J}M_j$ and an $\CA$-homomorphism  $f:\oplus_{j \in J}M_j\rt \oplus_{i \in I}N_i$ of  $\rm{Add}(\mmod \CA)$ is defined  by morphisms $f_{ij}:M_j\rt N_i$ in $\mmod \CA$, for $i \in I$ and $j \in J.$ Then define $\widehat{T}(\oplus_{j \in J}M_j)=\oplus_{j \in J}T(M_j)$ and $\widehat{T}(f)=[\widehat{T}(f)_{ij}]_{i\in I, j \in J}:\oplus_{i \in I}T(N_i)\rt \oplus_{j \in J}T(M_j)$ where $\widehat{T}(f)_{ij}=T(f_{ij}):T(N_i)\rt T(M_j)$. Observe that the functor $\widehat{T}$ satisfies the following communicative diagrams
	
	\[ \xymatrix{  (\rm{Add}(\mmod \CA))^{\rm{op}} \ar[rr]^<<<<<<<<<<{\widehat{T}} && \Mod k  \\
		(\mmod \CA)^{\rm{op}} \ar@{^(->}[u] \ar[rr]^<<<<<<<<<<<<<{T} && \mmod k  \ar@{^(->}[u] }\]
Define $\Phi(T)=\widehat{T}\circ F_{\bullet}$.	More precisely, for $X \in \mmod \CB$, choose (it exists since $F_{\la}$ is dense)  and fix some $M \in \mmod \CB$ such that $X\simeq F_{\la}(M)$. On objects, define   $\Phi(T)(X)=\oplus_{g \in G}T({}^gM)$. Assume $\iota:T_1\rt T_2$ is a morphism in $\CF(\mmod \CA)$. Then $\Phi(\iota)=(\Phi(\iota)_{X})_{X \in \mmod \CB}:\Phi(T_1)\rt \Phi(T_2)$ is defined as follows. For each $X \in \mmod \CB$, 
	$$\Phi(\iota)_{X}=[\Phi(\iota)_{X, g, h}]_{(g, h) \in G \times G}:\bigoplus_{g \in G}T_1({}^gM)\rt \bigoplus_{g \in G}T_2({}^gM),$$ 
	where $M $ is the fixed object in $\mmod \CA$  such that $F_{\la}(M)\simeq X$, and for each $g, h \in G$, if $g=h$, then $\Phi(\iota)_{X, g, h}$ is the following composite $$T_1({}^gM)\st{i_g}\rt \oplus_{g \in G} T_1({}^gM)\st{\Phi(\iota)_{X}}\lrt  \oplus_{g \in G} T_2({}^gM)\st{\pi_g}\rt T_2({}^gM) $$
	where $i_g$ and $\pi_g$ are the canonical injection and projection, respectively, and if $g\neq h$, then $\Phi(\iota)_{X, g, h}=0.$ As it is shown in \cite[Theorem 5.5]{P} the functor  $\Phi$ is  a $G$-precovering.\\
	To state the desired connection between the functors $\Phi$ and $\rm{H}F_{\la}$, we need to establish  the following construction.  In fact, this functor is a functorial version of  the functor studied in \cite{Ar2}.
	\begin{construction}\label{Construction 4.1}
	Let $\CC$ be an abelian category. Applying the Yoneda functor on objects of $\rm{H}(\CC)$ gives the following mapping 
	$$\Theta_{\CC}:\text{H}(\CC) \rt \CF(\CC) \ \ \ \ \ (X_1\st{f}\rt X_2)\mapsto \text{Coker}(\CC(-, X_1)\st{\CC(-, f)}\lrt \CC(-, X_2)).$$
	
	For morphism: Let $(h_1, h_2)=\mathbb{X}\rt \mathbb{X}'$, where $\mathbb{X}=(X_1\st{f}\rt X_2)$ and $\mathbb{X}'=(X'_1\st{f'}\rt X'_2)$,  be a morphism in $\rm{H}(\CC)$. Define $\Theta_{\CC}((h_1, h_2))$ the unique morphism $\sigma$ satisfies the following commutative diagram 
	
	\[\xymatrix{ \CC(-, X_1) \ar[r]^{\CC(-, f)} \ar[d]^{\CC(-, h_1)} & \CC(-, X_2) \ar[r] \ar[d]^{\CC(-, h_2)} & \Theta_{\CC}(\mathbb{X})  \ar[r] \ar[d]^{\sigma} & 0 \\
		\CC(-, X'_1)  \ar[r]^{\CC(-,f')} & \CC(-, X'_2) \ar[r] & \Theta_{\CC}(\mathbb{X}')  \ar[r]  & 0.}\]
\end{construction}

\subsection*{Notation} For simplifying the notations  in 
	the rest of our paper,  for any object  $X$ in $\mmod \CA$, resp. $\underline{\rm{mod}}\mbox{-}\CA$, and morphism $X\st{f}\rt Y$, resp. $\underline{f}:X\rt Y$ in $\mmod \CA$, resp. $\underline{\rm{mod}}\mbox{-}\CA$,  the induced representable functor $\mmod \CA(-, X)$, resp. $\underline{\rm{mod}}\mbox{-}\CA(-, X)$, and the induced natural transformation  $\mmod \CA(-, f)$, resp. $\underline{\rm{mod}}\mbox{-}\CA(-, \underline{f})$, in $\CF(\mmod \CA)$, resp. $\CF(\underline{\rm{mod}}\mbox{-}\CA)$, will be denoted by $\widehat{\CA}(-, X)$ and $\widehat{\CA}(-, f)$, resp.  $\underline{\widehat{\CA}}(-, X)$ and $\underline{\widehat{\CA}}(-, \underline{f})$,  respectively. The same replacement for the corresponding notations related to $\mmod \CB$, resp. $\underline{\rm{mod}}\mbox{-}\CB$,  will also be considered.  Further, we also follow the same convention for the covariant case,  for instance we write $\widehat{\CA}(X, -)$ instead of  $(\mmod \CA)^{\rm{op}}(X, -)$.\\

	The following result from \cite{P}  provides a nice connection between the functor $\Phi$ and $\rm{H}F_{\la}$.
	 
\begin{proposition}\cite[Proposition 5.3]{P}\label{Prop 4.2}
	Assume that $X, Y \in \mmod \CA$ and $f:X \rt Y$ is an $\CA$-homomorphism. Let $T=\text{Coker}(\widehat{\CA}(-, X)\st{\widehat{\CA}(-, f)}\rt \widehat{\CA}(-, Y))$. Then there is a natural isomorphism $\Phi(T)\simeq \text{Coker}(\widehat{\CA}(-, X)\st{\widehat{\CA}(-, f)}\rt \widehat{\CA}(-, Y)).$ Consequently, there is the following commutative diagram (up to isomorphism of functors) 	\[\xymatrix{ \rm{H}(\mmod \CA)\ar[rr]^{\Theta_{\mmod \CA}} \ar[d]^{\rm{H}F_{\la}} && \CF(\mmod \CA) \ar[d]^{\Phi} \\ \rm{H}(\mmod \CB) \ar[rr]^{\Theta_{\mmod \CB}} && \CF(\mmod \CB). }\]	
\end{proposition}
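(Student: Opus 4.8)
The plan is to verify the naturality-type claim by direct computation, working from the explicit descriptions of all four functors involved: $\Theta_{\mmod \CA}$, $\Theta_{\mmod \CB}$, $\rm{H}F_{\la}$, and $\Phi$. The key observation is that both the top-right path $\Phi \circ \Theta_{\mmod \CA}$ and the bottom-left path $\Theta_{\mmod \CB}\circ \rm{H}F_{\la}$ applied to an object $\mathbb{X} = (X \st{f}\rt Y)$ should produce (up to natural isomorphism) the same functor, namely the cokernel of $\widehat{\CB}(-, F_{\la}(X)) \st{\widehat{\CB}(-, F_{\la}(f))}\rt \widehat{\CB}(-, F_{\la}(Y))$. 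So the first step is to unwind the two sides on objects. Going along $\Theta_{\mmod \CA}$ first produces $T = \rm{Coker}(\widehat{\CA}(-,X)\rt \widehat{\CA}(-,Y))$ in $\CF(\mmod \CA)$, and then $\Phi(T)$ is computed; this is exactly the content of the quoted \cite[Proposition 5.3]{P}, which identifies $\Phi(T)$ with $\rm{Coker}(\widehat{\CB}(-, F_{\la}(X))\rt \widehat{\CB}(-, F_{\la}(Y)))$. On the other hand, $\rm{H}F_{\la}(\mathbb{X}) = (F_{\la}(X)\st{F_{\la}(f)}\rt F_{\la}(Y))$, and applying $\Theta_{\mmod \CB}$ to this gives, by definition of $\Theta_{\mmod \CB}$, precisely $\rm{Coker}(\widehat{\CB}(-, F_{\la}(X))\st{\widehat{\CB}(-, F_{\la}(f))}\rt \widehat{\CB}(-, F_{\la}(Y)))$. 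So on objects the two sides agree up to the isomorphism furnished by Proposition 5.3 of \cite{P}.

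The second step is to check compatibility on morphisms, i.e. that the object-level isomorphisms assemble into a natural isomorphism of functors $\Phi \circ \Theta_{\mmod \CA} \simeq \Theta_{\mmod \CB}\circ \rm{H}F_{\la}$. Given a morphism $(h_1, h_2):\mathbb{X}\rt \mathbb{X}'$ in $\rm{H}(\mmod \CA)$, one traces it through both composites: along the top, $\Theta_{\mmod \CA}$ sends it to the induced map $\sigma$ on cokernels via the defining commutative diagram in Construction \ref{Construction 4.1}, and then $\Phi$ is applied; along the bottom, $\rm{H}F_{\la}$ sends it to $(F_{\la}(h_1), F_{\la}(h_2))$ and then $\Theta_{\mmod \CB}$ produces the induced map on cokernels. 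Because $\Phi$ is built from $\widehat{(-)}\circ F_{\bullet}$ and the Yoneda embedding intertwines $F_{\la}$ with $F_{\bullet}$ via the adjunction $(F_{\la}, F_{\bullet})$ and the isomorphism $F_{\bullet}\circ F_{\la}(M)\simeq \oplus_{g\in G}{}^gM$ established in Section \ref{Perliminary}, both squares reduce to the same diagram chase on representable functors. The uniqueness clause in the universal property of cokernels then forces the two induced maps to coincide under the object-level isomorphism, giving naturality.

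The third step is simply to package: the commutativity of the square in the statement is the existence of a natural isomorphism between the two composite functors, which is exactly what steps one and two provide. I would also note that the monomorphism-category version follows by restriction, since $\Theta_{\mmod \CA}$ restricted to $\CS(\mmod \CA)$ and $\rm{H}F_{\la}$ restricted to $\CS F_{\la}$ are the relevant restrictions, and $F_{\la}$ exact means monomorphisms are preserved.

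I expect the main obstacle to be the bookkeeping in step two: carefully matching the two induced cokernel maps through the $G$-indexed direct sum decomposition that $\Phi$ introduces, and confirming that the isomorphism from \cite[Proposition 5.3]{P} is natural in $\mathbb{X}$ rather than just an isomorphism for each fixed object. One must check that the identification $\Phi(T)(F_{\la}(M)) = \oplus_{g\in G}T({}^gM)$ is compatible with the maps induced by $(h_1,h_2)$, which ultimately rests on the $G$-equivariance built into the definitions of $\Phi$ and of the $\CA$-action; this is routine but requires care with indices. Everything else — the object-level identification and the final packaging — is immediate from the cited results and the definitions.
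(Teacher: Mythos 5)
Your proposal is correct and follows essentially the same route as the paper, which offers no independent proof here: it quotes \cite[Proposition 5.3]{P} for the identification $\Phi(T)\simeq \rm{Coker}(\widehat{\CB}(-, F_{\la}(X))\st{\widehat{\CB}(-, F_{\la}(f))}\rt \widehat{\CB}(-, F_{\la}(Y)))$ (note the statement's typo, which you implicitly corrected) and then observes that the bottom path $\Theta_{\mmod \CB}\circ \rm{H}F_{\la}$ produces this same cokernel by definition. Your additional verification of naturality on morphisms via the universal property of cokernels simply makes explicit what the paper leaves to the cited reference and the definitions.
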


 Observe that the group $G$ acts on $\CF(\mmod \CA)$. Indeed, given a functor $T \in \CF(\mmod \CA)$ and $g \in G$,  we define ${}^gT \in \CF(\mmod \CA)$ (see \cite[Section 5]{P} for an argument 
 to show that  it belongs to $\CF(\mmod \CA)$)  such that ${}^gT(X)=T({}^{g^{-1}}X)$ and ${}^gT(f)=T({}^{g^{-1}}f)$, for any object $X \in \mmod \CA$  and $\CA$-homomorphism $f$. Note that ${}^{g^{-1}}X$ and ${}^{g^{-1}}f$ are defined by the action $G$ on $\mmod \CA$, which  introduced earlier.

 In the sequel, a similar connection between $\CS\rm{F}_{\la}$ and an restriction of the functor $\Phi$ will be considered. Let us first give the following characterization.
 
 let $\CC$ be an abelian category with enough projectives. Due to \cite[Proposition 3.2]{Au1}  we have the  following categorical characterization  of the subcategory $\CF(\underline{\CC})$:  let $T$ be in $\CF(\CC)$. Then 
 $$T \in \CF(\underline{\CC}) \Leftrightarrow \CF(\CC)(T, \CC(-, X))=0 \ \text{for any} \ X \in \CC.$$
 For any $g \in G$, $P \in \CF(\underline{\rm{mod}}\mbox{-}\CA)$ and $X \in \mmod \CA$, by the above characterization we get
 $$\CF(\mmod \CA)({}^gP, \widehat{\CA}(-, X)) \simeq \CF(\mmod \CA)(P, \widehat{\CA}(-, {}^gX))=0. $$ 
 Hence the characterization implies that ${}^gP$ lies in $\CF(\underline{\rm{mod}}\mbox{-}\CA)$. This means that the action $G$ restricts to an action on $\CF(\underline{\rm{mod}}\mbox{-}\CA)$. Therefore, the $G$-precovering $\Phi$ restricts to a $G$-precovering $\Phi\mid:\CF(\underline{\rm{mod}}\mbox{-}\CA)\rt  (\underline{\rm{mod}}\mbox{-}\CB)$. \\
 We need to recall the following  construction from \cite[Construction 3.1]{H}.
 \begin{construction}\label{FirstCoonstr}
 	
 	Let $\CC$ be an abelian category with enough projectives. 	Taking an object $(A \st{f} \rt B)$ of $\CS(\CC)$, by applying the Yoneda functor on the following short exact sequence 
 	$$0 \rt A \st{f} \rt  B \rt \text{Coker}( f) \rt 0$$ we obtain   
 	the following short exact sequence
 	$$ (*) \ \ \ 0 \lrt \CC(-, A) \st{\CC(-, f )} \rt \CC(-, B) \rt \CC(-, \text{Coker}(f)) \rt F \rt 0$$
 	in $\CF(\mathcal{C})$. In fact, $(*)$  corresponds to a projective resolution of $F$ in $\mmod \mathcal{C}.$ We define $\Psi_{\CC}(A \st{f} \rt B):= F$. 
 	
 	For morphism: Let $\sigma=(\sigma_1, \sigma_2)$ be a morphism from $(A \st{f} \rt B)$ to $(A' \st{f'} \rt B')$, it gives the following commutative  diagram
 	\[\xymatrix{0 \ar[r] & A \ar[r]^{f} \ar[d]^{\sigma_1} & B \ar[r] \ar[d]^{\sigma_2} & \text{Coker}(f)  \ar[r] \ar[d]^{\sigma_3} & 0 \\
 		0 \ar[r] & A'  \ar[r]^{f'} & B' \ar[r] & \text{Coker}(f')  \ar[r]  & 0.}\]
 	By applying the Yoneda functor, the above diagram gives  the following commutative  diagram
 	
 	\[\xymatrix{0 \ar[r] & \CC(-, A) \ar[d]_{\CC(-, \sigma_1)} \ar[r]^{\CC(-, f)} & \CC(-, B) \ar[d]^{\CC(-, \sigma_2)} \ar[r] & \CC(-, \text{Coker}(f)) \ar[d]^{\CC(-, \sigma_3)} \ar[r] & F \ar[d]^{\overline{\CC(-, \sigma_3)}} \ar[r] & 0 \\ 0  \ar[r] & \CC(-, A')\ar[r]^{\CC(-, f')} & \CC(-, B')\ar[r] & \CC(-,\text{Coker}(f') )\ar[r] & F' \ar[r] & 0.}\]
 	
 	in $\mmod \mathcal{C}.$ Define $\Psi_{\CC}(\sigma):= \overline{\CC(-,\sigma_3)}$.	
 \end{construction}
 \begin{proposition}\label{Prop 4.4}
 	There is  the following commutative diagram (up to isomorphism of functors)	\[\xymatrix{ \CS(\mmod \CA)\ar[rr]^{\Psi_{\mmod \CA}} \ar[d]^{\rm{H}F_{\la}} && \CF(\underline{\rm{mod}}\mbox{-}\CA) \ar[d]^{\Phi\mid} \\ \CS(\rm{mod}\mbox{-}\CB) \ar[rr]^{\Psi_{\mmod \CB}} && \CF(\underline{\rm{mod}}\mbox{-}\CB). }\]	
 \end{proposition}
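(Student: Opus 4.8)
The plan is to verify the commutativity of the square objectwise and morphismwise, reducing everything to the already-established commutativity of the square in Proposition~\ref{Prop 4.2} (between $\rm{H}F_{\la}$, $\Phi$ and the two Yoneda-cokernel functors $\Theta_{\mmod\CA}$, $\Theta_{\mmod\CB}$). First I would fix an object $\mathbb{X}=(A\st{f}\rt B)$ of $\CS(\mmod\CA)$, so $f$ is a monomorphism and we have the short exact sequence $0\rt A\st{f}\rt B\rt \Coker(f)\rt 0$ in $\mmod\CA$. Applying Yoneda gives the four-term exact sequence $(*)$ of Construction~\ref{FirstCoonstr}, whose last term is $\Psi_{\mmod\CA}(\mathbb{X})$; note that by definition this last term equals $\Theta_{\mmod\CA}(\,\widehat{\CA}(-,B)\st{}\rt \widehat{\CA}(-,\Coker f)\,)$ applied to the canonical epimorphism $B\twoheadrightarrow\Coker f$, i.e. $\Psi_{\mmod\CA}(\mathbb{X})\cong\Theta_{\mmod\CA}(B\twoheadrightarrow\Coker f)$ as a functor in $\CF(\underline{\rm{mod}}\mbox{-}\CA)\subseteq\CF(\mmod\CA)$. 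This is the bridge between the two constructions, so the first real step is to make that identification precise and functorial in $\mathbb{X}$.

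Next I would chase the two ways around the square on this object. Going down-then-right: $\rm{H}F_{\la}(\mathbb{X})=(F_{\la}(A)\st{F_{\la}(f)}\rt F_{\la}(B))$, which lies in $\CS(\mmod\CB)$ because $F_{\la}$ is exact, hence monomorphism-preserving; moreover $F_{\la}$ commutes with cokernels, so $\Coker(F_{\la}f)\cong F_{\la}(\Coker f)$ and the canonical epi for $\rm{H}F_{\la}(\mathbb{X})$ is $F_{\la}$ of the one for $\mathbb{X}$. Applying $\Psi_{\mmod\CB}$ and using the identification above gives $\Psi_{\mmod\CB}(\rm{H}F_{\la}(\mathbb{X}))\cong\Theta_{\mmod\CB}(F_{\la}(B)\twoheadrightarrow F_{\la}(\Coker f))$. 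Going right-then-down: $\Psi_{\mmod\CA}(\mathbb{X})\cong\Theta_{\mmod\CA}(B\twoheadrightarrow\Coker f)$, and then Proposition~\ref{Prop 4.2} (with $X=B$, $Y=\Coker f$) says $\Phi(\Theta_{\mmod\CA}(B\twoheadrightarrow\Coker f))\cong\Theta_{\mmod\CB}(F_{\la}(B)\twoheadrightarrow F_{\la}(\Coker f))$, using $F_{\la}(B\twoheadrightarrow\Coker f)=(F_{\la}(B)\twoheadrightarrow F_{\la}(\Coker f))$. The two sides agree. I would then note that since $\Psi_{\mmod\CA}$ lands in $\CF(\underline{\rm{mod}}\mbox{-}\CA)$ (this is exactly the content of Construction~\ref{FirstCoonstr}: $(*)$ is a projective resolution of $F$ of length governed by a short exact sequence, so $F$ has no maps to representables), the relevant restriction $\Phi\mid$ of $\Phi$ is what appears, which is consistent with the statement.

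For morphisms I would take $\sigma=(\sigma_1,\sigma_2):\mathbb{X}\rt\mathbb{X}'$ in $\CS(\mmod\CA)$, which by the snake/five lemma induces $\sigma_3:\Coker f\rt\Coker f'$ and hence a morphism of the canonical epimorphisms, i.e. a morphism $(B\twoheadrightarrow\Coker f)\rt(B'\twoheadrightarrow\Coker f')$ in $\rm{H}(\mmod\CA)$; $\Psi_{\mmod\CA}(\sigma)=\overline{\widehat{\CA}(-,\sigma_3)}$ is then $\Theta_{\mmod\CA}$ applied to that morphism. Since $\sigma_3$ is natural in $\sigma$ and $F_{\la}$ preserves the whole commutative diagram with exact rows (again by exactness of $F_{\la}$), the same chase as above, now at the level of morphisms, together with the morphism part of Proposition~\ref{Prop 4.2}, gives $\Psi_{\mmod\CB}(\rm{H}F_{\la}(\sigma))\cong\Phi\mid(\Psi_{\mmod\CA}(\sigma))$ compatibly with the isomorphisms constructed on objects. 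Finally I would check that these objectwise isomorphisms assemble into a natural isomorphism of functors $\Psi_{\mmod\CB}\circ\rm{H}F_{\la}\Rightarrow\Phi\mid\circ\,\Psi_{\mmod\CA}$; naturality follows because every isomorphism used (the identification $\Psi_{\CC}\cong\Theta_{\CC}(-\,\twoheadrightarrow\Coker)$ and the isomorphism of Proposition~\ref{Prop 4.2}) is itself natural, and composites of natural isomorphisms are natural.

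\textbf{Main obstacle.} The only delicate point is bookkeeping: making sure the chosen lift $M\in\mmod\CA$ with $F_{\la}(M)\cong X$ used in the definition of $\Phi$ is compatible with the exactness-based identifications (so that $\Phi$ applied to $\Theta_{\mmod\CA}(B\twoheadrightarrow\Coker f)$ really is computed via $B$ and $\Coker f$ themselves, not some other representatives), and that the restriction $\Phi\mid$ to $\CF(\underline{\rm{mod}}\mbox{-}\CA)$ is legitimately where both composites land. Once Proposition~\ref{Prop 4.2} is in hand and $\Psi_{\CC}$ is recognized as $\Theta_{\CC}$ precomposed with the canonical-epimorphism construction, the rest is a routine, if somewhat lengthy, diagram chase using only exactness of $F_{\la}$ and the snake lemma; I expect no conceptual difficulty beyond that.
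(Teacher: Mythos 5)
Your proposal is correct and follows essentially the same route as the paper: both identify $\Psi_{\CC}(A\st{f}\rt B)$ with $\Theta_{\CC}(B\rt \mathrm{Coker}(f))$, invoke the commutative square of Proposition~\ref{Prop 4.2}, and use exactness of $F_{\la}$ to identify $\mathrm{Coker}(F_{\la}(f))$ with $F_{\la}(\mathrm{Coker}(f))$. The paper's proof is just a terser version of your three-step chain of natural isomorphisms, leaving the morphism-level and naturality checks implicit.
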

 \begin{proof}
 	The proof is rather a  direct consequence of \cite[Proposition 5.3]{P}. Let $\mathbb{X}=(X_1\st{f}\rt X_2)$ be in $\CS(\mmod \CA)$.
 	
 	\begin{align*}
 	\Phi\mid\circ \Psi_{\mmod \CA} (\mathbb{X})&\simeq \Phi \circ \Theta_{\mmod \CA}((X_2\rt \text{Coker} (f)))\\
 	&\simeq \Theta_{\mmod \CB}\circ \text{H}F_{\la}((X_2\rt \text{Coker}(f)) )\\
 	& \simeq \Psi_{\mmod \CB}\circ \CS F_{\la}((X_1\st{f}\rt X_2)).
 	\end{align*}
 	The  above natural isomorphisms follow from \cite[Proposition 5.3]{P} (or Proposition \ref{Prop 4.2}) and the exactness of the functor $F_{\la}$.  	
 \end{proof}
For our use in the next section  we only provide the above  relationships between the appeared  functors in Propositions \ref{Prop 4.2} and \ref{Prop 4.4}.   As in the remark below we shall discuss the relationship between those functors is more complete than the ones of the above results.
\begin{remark}
	Let $\CC$ be an abelian category with enough projectives. Let $\CU_{\CC}$ denote the smallest additive subcategory of  $\rm{H}(\mmod \CA)$ containing all  objects of the form $(X\rt 0)$ and $(X\st{1}\rt X)$, where $X$ runs through $\CA$. Denote by $\rm{H}(\CC)/ \CU_{\CC}$ the quotient category of $\CC$ by $\CA$. The quotient category  $\rm{H}(\CC)/ \CU_{\CC}$ has  the same objects as $\CC$ while the morphisms for object $X$, $Y$ in $\CC$  defined by the quotient
	$$\Hom_{\rm{H}(\CC)/ \CU_{\CC}}(X, Y)=\Hom_{\CC}(X, Y)/\{\phi\mid  \phi \ \text{ factors through an object in }\ \CU_{\CC}\}.$$  One can prove that the functor $\Theta_{\CC}:\rm{H}(\CC)\rt \mmod \CC$ induces  an equivalence $\hat{\Theta}_{\CC}:\rm{H}(\CC)/\CU_{\CC}\rt \mmod \CC.$ In fact, with the same argument of \cite[Theorem 3.1.3]{AHS} we can show that the functor $\Theta_{\CC}$ is full, dense and objective. Further, its kernel object is $\CU_{\CC}$. As a direct consequence of \cite[Appendix]{RZ} we get the claim. Now we specialize our  above observation to the  case $\CC= \mmod \CA$ or $\mmod \CB.$ Since the subcategory $\CU_{\mmod \CA}$ is preserved by any $g \in G$, hence the group $G$ defines an action on $\rm{H}(\CC)/\CU_{\CC}.$ In view of Theorem  \ref{Theorem 3.2} one may show that the $G$-precovering $\rm{H}F_{\la}$ induces a $G$-precovering $\widehat{\rm{H}F_{\la}}:\rm{H}(\mmod \CA)/\CU_{\mmod \CA}\rt \rm{H}(\mmod \CB)/\CU_{\mmod \CB}$  between the corresponding quotient categories. Let us summarize the above observation in the following commutative diagram
	
	\[\xymatrix@C-1.pc@R-1pc{ & \rm{H}(\mmod \CA) \ar[rr]^<<<<<{\pi_{\mmod \CA}} \ar[dl]_{\Theta_{\mmod \CA}} \ar[dd]^{\rm{H}F_{\la}} && \text{H}(\mmod \CA)/\CU_{\mmod \CA}  \ar[dd]^{\widehat{\text{H}F_{\la}}}\ar[dlll]_{\simeq }\\
	\CF(\mmod \CA)\ar[dd]^{\Phi}  &&  &\\
		& \text{H}(\mmod \CB) \ar[rr]^<<<<<{\pi_{\mmod \CB} } \ar[dl]_{\Theta_{\mmod \CB}}  && \rm{H}(\mmod \CB)/\CU_{\mmod \CB} \ar[dlll]^{\simeq} \\
	\CF(\mmod \CB)  && &
	}\]
where $\pi_{\mmod \CA}$ and $\pi_{\mmod \CB}$ denote the corresponding quotient functors, and the equivalences are the induced functors $\widehat{\Theta}_{\mmod \CA}$ and $\widehat{\Theta}_{\mmod \CB}$. Hence the above diagram gives a complete feature of the diagram in Proposition \ref{Prop 4.2}. We have the same observation for the diagram included in Proposition \ref{Prop 4.4} as follows. Let $\CV_{\CC}$ denote the smallest additive subcategory of  $\CS(\mmod \CA)$ containing all  objects of the form $(0\rt X)$ and $(X\st{1}\rt X)$, where $X$ runs through $\CA$.  In \cite[Theorem 3.2]{H} is proved that the functor $\Psi_{\CC}$, see Construction \ref{FirstCoonstr}, induces an equivalence $\widehat{\Psi}_{\CC}:\CS(\CC)/\CV_{\CC}\rt \CF(\underline{\CC})$. 
 In the same way as the morphism categories  and in conjunction with the equivalence $\widehat{\Psi}_{\CC}$ for the cases $\CC=\mmod \CA$ and $\mmod \CB$, we obtain the following commutative diagram up to isomorphism of functors
	\[\xymatrix@C-1.pc@R-1pc{ & \CS(\mmod \CA) \ar[rr]^<<<<<{\pi'_{\mmod \CA}} \ar[dl]_{\Psi_{\mmod \CA}} \ar[dd]^{\CS \text{F}_{\la}} && \CS(\mmod \CA)/\CV_{\mmod \CA}  \ar[dd]^{\widehat{\CS \text{F}_{\la}}}\ar[dlll]_{\simeq }\\
	\CF(\underline{\rm{mod}}\mbox{-}\CA)\ar[dd]^{\Phi\mid}  &&  &\\
	& \CS(\mmod \CB) \ar[rr]^<<<<<{\pi'_{\mmod \CB} } \ar[dl]_{\Psi_{\mmod \CB}}  && \CS(\mmod \CB)/\CV_{\mmod \CB} \ar[dlll]^{\simeq} \\
	\CF(\underline{\rm{mod}}\mbox{-}\CB)  && &
}\]
where the equivalences are the induced equivalences $\widehat{\Psi}_{\mmod \CA}$ and $\widehat{\Psi}_{\mmod \CB}$ by $\Psi_{\mmod \CA}$ and $\Psi_{\mmod \CB}$, respectively.
\end{remark}

\section{ The induced $G$-precovering $\rm{H}F_{\la}$ and $\CS \rm{F}_{\la}$ preserve the almost split sequences}\label{Section 5}
The main objective of this section is to show that the functors $\rm{H}\rm{F}_{\la}$ and $\CS \rm{F}_{\la}$ preserve the almost split sequences. This section is divided into three subsections. In the first subsection, we will provide the general structure of almost split sequence ending at  some kinds of objects in the morphism and monomorphism categories over a  Krull-Schmidt abelian category $\CC$ with enough projectives and injectives. Moreover, we will discuss a nice connection between the almost split sequences in $\rm{H}(\CC)$ and $\CS(\CC)$ and the ones in $\CF(\CC)$ and $\CF(\underline{C})$, respectively.  In the last two subsections, we specialize the general results for our special  cases ($(\CS)\rm{H}(\mmod \CA)$ and $(\CS)\rm{H}(\mmod \CB)$) to achieve  our main results. 

\subsection*{Setup} From now on to the end of the paper, we assume that the action of $G$ on $\CA$ have only finitely many $G$-orbits. In this case, the category $\CB$ (or the orbit catgery $\CA/G$) is finite and we treat it as an algebra. Further, $\mmod \CB$ is of finite representation type. Hence we can identify $\CF(\mmod \CB)$ and $\CF(\underline{\rm{mod}}\mbox{-}\CB)$ by the category of finitely generated modules over the  Auslander  algebra  and stable Auslander algebra of $\CB$, respectively.

\subsection{Almost split sequences in (Mono)morphism category over an abelian category }
Let $\CC$ be a Krull-Schmidt category (throughout this section) and $f:X\rt Y$ a morphism in $\CC$. Recall that $f$ is {\it left minimal} if every factorization $f = hf$ implies that $h$ is an automorphism of X; and left almost split if $f$ is not a section and every non-section morphism $g : X \rt Z$ factors through $f$. The concepts of {\it right minimal}  and {\it right almost split } are defined dually. Observe that $X $ or  $Y$ is indecomposable in case $f$ is left almost or right almost, respectively; see \cite{Ar1}.\\ Assume $C$ ia an abelian category and $\mathcal{D}$ an extension-closed subcategory of $\CC$. A short exact sequence $\delta: 0\rt X  \st{f}\rt Y\st{g}\rt Z\rt 0$ in $\mathcal{D}$ is said to be {\it almost split } if $f$ and $g$  are  a minimal left almost split morphism and   a minimal right almost split morphism in $\mathcal{D}$, respectively. Since $\delta $ is unique up to isomorphism for $X$ and $Z$, we may write $X=\tau_{\mathcal{D}}Z$ and $Z=\tau^{-1}_{\mathcal{D}}X$. We call $X$ the Auslander-Reiten translation of $Z$ in $\mathcal{D}$. 
We say that an object $Z$ in $\mathcal{D}$ is {\it relative-projective} if any exact sequence $0 \rt X\rt Y \rt Z\rt 0$ in $\mathcal{D}$ splits. Dually, we define {\it relative-injective} objects.  We shall say that $\mathcal{D}$ has {\it right  almost  split}   sequences
if every indecomposable object is either  relative projective or the ending term of an almost split sequence; dually, $\mathcal{D}$ has {\it left almost split} sequences
if every indecomposable object is either  relative injective or the starting term of an almost split sequence. We call  $\mathcal{C}$ has {\it almost  split}  sequences if it has both left and right almost split sequences.\\
Recall  that a Hom-finite $k$-linear  category $\CC$ is called a {\it dualizing $k$-variety} \cite{AR3} if the functors $D:\Mod \CC \rt \Mod \CC^{\rm{op}}$ and $D:\Mod \CC^{\rm{op}}\rt \Mod \CC$, induced by the $k$-dual standard $D=\Hom_k(-, k)$, restricts to the following dualities

$$D:\CF(\CC)\rt  \CF(\CC^{\rm{op}})\  \text{and} \  D:\CF(\CC^{\rm{op}})\rt \CF(\CC).$$

By abuse of notation all the aforementioned functors are denoted by the same letter $D$. We sometimes denote by $D_{\CC}$ if we want to make distinguish between the dualities for different abelian categories.  In this case, $\CF(\CC)$ is an abelian subcategory of $\Mod \CC$ which is closed under kernels, cokernels and extensions, and has enough projective objects and injective objects (more precisely, any object has projective cover, and equivalently has injective envelop).  Moreover, all simple objects in $\Mod \CC$ and $\Mod \CC^{\rm{op}}$ are finitely presented.  This implies that for every indecomposable object $X \in \CC$ there exists a right almost split morphism $f:Y\rt X $, and a left almost split morphism $g:X\rt Z,$ see \cite{Au3} for more details. Also, in the same way of the module category over a finite dimensional algebra \cite{AuslanreitenSmalo}, the Auslander-Reiten translation of a non-projective  indecomposable functor $M$ in $\CF(\CC)$  is computed as $\tau_{\CF(C)}\simeq D \circ \CN_{\CC}( \Ker(f_M))$, where $f_M:P_1\rt P_0$ a minimal projective resolution of $M$, and $\CN_{\CC}:\Mod \CC\rt \Mod \CC$ is the {\it Nakayama functor}. The Nakayama functor $\CN_{\CC}:\Mod \CC\rt \Mod \CC$ is the composite $D\circ (-)^*$, where $(-)^*:\Mod \CC\rt \Mod \CC^{\rm{op}} $ is a  contravariant functor and defined as follows: for $M \in \Mod \CC$ and $X \in \CC $, let
$$(M)^*(X):=\Mod\CC(M, \CC(-, X)).$$ 
By definition, one can see $\CN_{\CC}(\CC(-, X))\simeq D(\CC(X, -))$, for any $X \in \CC.$
For example, any locally bonded category $\mathcal{A}$ is a dualizing $k$-variety. To see this, apply the fact that $\CF(\CA)=\mmod \CA.$ If $\CC$ is a  dualizing $k$-variety, then  so is $\CF(\CC)$
 \cite[Proposition 2.6]{AR3}.
 
\subsection*{Setup} From now on to the end of this section, let $\CC$ be a semiperfect and Hom-finite abelian category with enough projectivs and injectivs and $\rm{H(\CC)}$ has almost split sequences. Hence $\CC$ is a Krull-Schmidt category.

 In \cite{RS2}, the notation $\CF(\CC)$ is used to denote the subcategory of $\rm{H}(\CC)$ consisting of all epimorphisms. Here since we use the notation  $\CF(\CC)$ for the category of finitely presented functors over $\CC$, hence in place of  $\CF(\CC)$   the notation $\rm{Epi}(\CC)$ is used. The kernel and cokernel functors  are defined as follows:
 $$\Ker:\rm{Epi}(\CC)\rt \CS(\CC), \ \ \ (A\st{f}\rt B) \mapsto (B \rt \rm{Coker}(f)),$$
 $$\rm{Cok}:\CS(\CC)\rt \rm{Epi}(\CC), \ \ \  (B\st{g}\rt C)\mapsto (\Ker(g)\rt B), $$
 induce a pair of inverse equivalences. To distinguish the above functors for a specific abelian category $\CC$, in case of need, we denote $\Ker_{\CC}$ and $\rm{Cok}_{\CC}$.
 
 Later we  will need the following preparatory  lemmas. For an object  $C \in \CC$, $\Omega_{\CC}(C)$ denotes the kernel of a projective cover of $C$.
\begin{lemma}\cite[Lemma 6.3]{H}\label{AlmostSplittrivialmonomorphisms}
	  Assume  $0 \rt  A\st{f} \rt B\st{g} \rt C \rt 0$ is  an almost split sequence in $\CC.$ Then
	\begin{itemize}
		\item[$(1)$] The almost split sequence in $\CS(\CC)$ ending at $(0\rt C)$ has the form 
		{\footnotesize  \[ \xymatrix@R-2pc {  &  ~ A\ar[dd]^{1}~   & A\ar[dd]^{f}~  & 0\ar[dd] \\ 0 \ar[r] &  _{ \ \ \ \ } \ar[r]^{1}_{f}  &_{\ \ \ \ \ } \ar[r]^{0}_{g} _{\ \ \ \ \ }&  _{\ \ \ \ \ }\ar[r] & 0. \\ & A & B &C}\]}
		\item [$(2)$] Let $e:A \rt I$ be a left minimal morphism with injective object $I$ (or injective envelop of $A$). Then the almost split sequence in $\CS(\CC)$ ending at $(C\st{1}\rt C)$ has the form
		{\footnotesize  \[ \xymatrix@R-2pc {  &  ~ A\ar[dd]^{e}~~   & B\ar[dd]^{h}~  & C\ar[dd]^1 \\ 0 \ar[r] &  _{ \ \ \ \ } \ar[r]^{f}_{[1~~0]^t}  &_{\ \ \ \ \ } \ar[r]^{g}_{[0~~1]} _{\ \ \ \ \ }&  _{\ \ \ \ \ }\ar[r] & 0, \\ & I & I \oplus C &C}\]} 	  	 	
		where $h$ is the map $[e'~~g]^t$ with $e':B \rt I$ is an extension of $e.$
		\item [$(3)$] Let $b:P\rt C$ be the projective cover of $C$ in $\CC$, i.e., a right minimal morphism with projective object $P$ in $\CC$. Then the almost split sequence in $\CS(\CC)$ starting  at $(0 \rt A)$ has the form 
		{\footnotesize  \[ \xymatrix@R-2pc {  &  ~ 0\ar[dd]   & \Omega_{\CC}(C)\ar[dd]^{h}~  & \Omega_{\CC}(C)\ar[dd]^i \\ 0 \ar[r] &  _{ \ \ \ \ } \ar[r]^{0}_{[1~~0]^t}  &_{\ \ \ \ \ } \ar[r]^{1}_{[0~~1]} _{\ \ \ \ \ }&  _{\ \ \ \ \ }\ar[r] & 0, \\ & A & A \oplus P &P}\]}
		where $h$ is  the kernel of morphism $[f~~b']:A\oplus P\rt B$, here $b'$ is a lifting of $b$ to $g$. 
		
	\end{itemize}
\end{lemma}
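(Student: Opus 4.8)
The plan is to establish each of the three almost split sequences by directly verifying the defining properties (minimal left/right almost split in $\CS(\CC)$) rather than by invoking an abstract machine. The starting point in all three cases is the given almost split sequence $0 \rt A \st{f}\rt B \st{g}\rt C \rt 0$ in $\CC$, together with the standard facts that $\End_{\CC}(A)$ and $\End_{\CC}(C)$ are local, that $f$ is minimal left almost split and $g$ is minimal right almost split in $\CC$, and the explicit description of the kernel-cokernel equivalence $\Ker_{\CC}:\rm{Epi}(\CC)\rt \CS(\CC)$. Throughout I would freely use that $\CS(\CC)$ is extension-closed in $\rm{H}(\CC)$, so "almost split" there means the componentwise-exact sequences that are minimal left/right almost split inside $\CS(\CC)$.

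For part $(1)$: the object $(0 \rt C)$ is indecomposable in $\CS(\CC)$, and I would first check that the displayed sequence is an exact sequence \emph{in} $\CS(\CC)$ (each row is exact in $\CC$, and both the middle and left objects are honest monomorphisms — the left one is $1_A$, the middle one is $f$). Then I verify that $(0,g)\colon (A\st f\rt B)\rt (0\rt C)$ is right almost split: given any non-retraction $(\phi_1,\phi_2)\colon (V\st v\rt W)\rt (0\rt C)$ in $\CS(\CC)$, necessarily $\phi_1=0$ and $\phi_2\colon W\rt C$ is not a retraction in $\CC$ (else the map in $\CS(\CC)$ would split off $(0\rt C)$), so $\phi_2$ factors through $g$ as $\phi_2=g\psi$; then $(0,\psi)$ gives the required factorization, and one checks it is a morphism in $\CS(\CC)$ because $v$ is mono so $\psi v$ is forced. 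Right-minimality follows from right-minimality of $g$ together with locality of $\End_{\CC}(C)\cong\End_{\CS(\CC)}(0\rt C)$. That the middle term is exactly $(A\st f\rt B)$ (and not a bigger object) is then automatic from the Auslander--Reiten uniqueness once we know the sequence is almost split, or can be read off since $(A\st 1\rt A)$ and $(A\st f\rt B)$ have local endomorphism rings inherited from $\End(A)$.

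For parts $(2)$ and $(3)$ I would use the equivalence $\Ker_{\CC}:\rm{Epi}(\CC)\rt \CS(\CC)$ (and its inverse $\rm{Cok}_{\CC}$) together with the known structure of almost split sequences in $\rm{Epi}(\CC)$, which in turn comes from those in $\CC$ by the dual of part $(1)$. Concretely: $(C\st 1\rt C)=\Ker_{\CC}(C\rt 0)$ of the split epi, and $(0\rt A)$ corresponds under $\rm{Cok}_{\CC}$ to an epimorphism with kernel $A$, i.e. to $\Omega_{\La}(C)\rt P\twoheadrightarrow C$ built from the projective cover $b\colon P\rt C$. Transporting the (dual of the) almost split sequence in part $(1)$ for $\rm{Epi}(\CC)$ through the equivalence $\Ker_{\CC}$ produces precisely the displayed sequences; the only thing to do by hand is to identify the middle terms and connecting maps, which is a diagram chase using that $\Ker_{\CC}$ sends $(A\st h\rt B')$ to $(B'\rt\Coker h)$ and the snake lemma. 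The main obstacle I anticipate is bookkeeping in part $(2)$ and $(3)$: correctly matching the map $h=[e'\ \ g]^t$ (resp. $h=\ker[f\ \ b']$) under the equivalence, and verifying that these are indeed \emph{minimal} — minimality of the left-hand map in $(2)$ reduces to left-minimality of the injective envelope $e\colon A\rt I$ plus the fact that $g$ is minimal right almost split, and a short argument that no nonzero summand of $I\oplus C$ can be split off compatibly. None of the individual verifications is deep, but the cited \cite[Lemma 6.3]{H} already carries these out, so I would present the argument as an application of that lemma combined with the $\Ker_{\CC}$/$\rm{Cok}_{\CC}$ equivalence, spelling out only the identification of the three middle terms.
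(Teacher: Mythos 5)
Your treatment of part $(1)$ is essentially sound: verifying directly that $(0,g)\colon (A\st f\rt B)\rt (0\rt C)$ is right almost split is exactly the style of argument the paper itself uses (for the morphism-category analogue in Lemma \ref{Lemma 5.2}), and the only slip is that the factoring morphism is not $(0,\psi)$: its first component is the induced map $V\rt A$ obtained from $g\psi v=\phi_2 v=0$ together with $f=\ker g$. For the record, the paper does not even argue $(1)$ and $(2)$ from scratch; it imports them from \cite[Proposition 7.1]{RS2}, noting the argument there transfers to this setting, and then obtains $(3)$ by applying the kernel equivalence $\Ker\colon \rm{Epi}(\CC)\rt \CS(\CC)$ to the almost split sequence in $\rm{Epi}(\CC)$ starting at $(A\st 1\rt A)$ supplied by \cite[Proposition 7.4]{RS2}.

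The genuine gap is in your plan for $(2)$ and $(3)$. Under the cokernel equivalence, $(C\st 1\rt C)$ corresponds to $(C\rt 0)$ and $(0\rt A)$ corresponds to $(A\st 1\rt A)$ (not to an object built from $\Omega(C)\rt P$, as you wrote), so what you need are the almost split sequences in $\rm{Epi}(\CC)$ ending at $(C\rt 0)$, respectively starting at $(A\st 1\rt A)$. Neither of these is a consequence of ``the dual of part $(1)$'': dualizing $(1)$ yields the sequence $0\rt (A\rt 0)\rt (B\st g\rt C)\rt (C\st 1\rt C)\rt 0$ in $\rm{Epi}(\CC)$, and applying $\Ker$ to it merely reproduces part $(1)$ itself. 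The sequences you actually need involve the injective envelope $e\colon A\rt I$ (resp. the projective cover $b\colon P\rt C$), and establishing that the displayed maps, e.g.\ $(f,[1\ \ 0]^{t})\colon (A\st e\rt I)\rt (B\st h\rt I\oplus C)$, are almost split and that the AR translate of $(C\st 1\rt C)$ in $\CS(\CC)$ is $(A\st e\rt I)$ is the substantive content of the lemma --- it is precisely \cite[Propositions 7.1 and 7.4]{RS2}, not a matter of bookkeeping or a minimality check on $I\oplus C$. Your route becomes correct only after part $(2)$ (or its $\rm{Epi}(\CC)$-dual) has been proved by such an argument; then $(3)$ does indeed follow by transporting through $\Ker$, which is how the paper concludes.
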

 The condition in our set up which $\rm{H}(\CC)$ has almost split sequences is essential in the following lemma.
\begin{lemma}\label{Lemma 5.9}
	The monomorphism category $\CS(\CC)$ has almost split sequences.
\end{lemma}
\begin{proof}
	The same argument in \cite[Section 2]{RS2} works to prove that $\CS(C)$ is a functorially finite subcategory in $\rm{H}(\CC)$. Now by our assumption we know that $\rm{H}(\CC)$ has almost split sequences. Then by use of \cite[Theorem 2.4]{ASm} we get the result.
\end{proof}
\begin{lemma}\label{Lemma 5.2}
	  Assume  $0 \rt  A\st{f} \rt B\st{g} \rt C \rt 0$ is  an almost split sequence in $\CC.$ Then
	\begin{itemize}
		\item[$(1)$] The almost split sequence in $\rm{H}(\CC)$ ending at $(0\rt C)$ has the form 
		{\footnotesize  \[ \xymatrix@R-2pc {  &  ~ A\ar[dd]^{1}~   & A\ar[dd]^{f}~  & 0\ar[dd] \\ 0 \ar[r] &  _{ \ \ \ \ } \ar[r]^{1}_{f}  &_{\ \ \ \ \ } \ar[r]^{0}_{g} _{\ \ \ \ \ }&  _{\ \ \ \ \ }\ar[r] & 0. \\ & A & B &C}\]}
		\item [$(2)$] The almost split sequence in $\rm{H}(\CC)$ ending at $(C\st{1}\rt C)$ has the form 
		{\footnotesize  \[ \xymatrix@R-2pc {  &  ~ A\ar[dd]^{}~   & B\ar[dd]^{g}~  & C\ar[dd]^{1} \\ 0 \ar[r] &  _{ \ \ \ \ } \ar[r]^{f}_{0}  &_{\ \ \ \ \ } \ar[r]^{g}_{1} _{\ \ \ \ \ }&  _{\ \ \ \ \ }\ar[r] & 0. \\ & 0 & C &C}\]}
		\end{itemize}
\end{lemma}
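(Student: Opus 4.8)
The plan is to handle parts $(1)$ and $(2)$ by a direct verification inside $\rm{H}(\CC)$, and to deduce $(3)$ from \lemref{AlmostSplittrivialmonomorphisms}$(2)$ via the cokernel equivalence. For $(1)$ and $(2)$ I would first observe that the two displayed complexes are short exact sequences in the abelian category $\rm{H}(\CC)$: componentwise the top and bottom rows are the given almost split sequence $0\to A\st{f}\to B\st{g}\to C\to 0$ and an evidently exact sequence, and the squares built from the structure maps commute because $gf=0$. The end terms $(0\rt C)$, resp.\ $(C\st{1}\rt C)$, are indecomposable since $C$ is, and the kernels $(A\st{1}\rt A)$, resp.\ $(A\rt 0)$, are indecomposable since $A$ is (their endomorphism rings are $\End(A)$). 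Recalling that a right almost split epimorphism whose kernel is indecomposable is automatically right minimal — so that the short exact sequence it sits in is almost split — it then suffices to prove that the right-hand morphism, namely $(0,g)$ in $(1)$ and $(g,1)$ in $(2)$, is right almost split in $\rm{H}(\CC)$.

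To check this I would start from an arbitrary morphism $(\phi_1,\phi_2)\colon(Z\st{v}\rt W)\rt \mathbb{E}$ into the relevant end term $\mathbb{E}$; the commutativity constraint forces $\phi_1=0$ (case $(1)$) or $\phi_1=\phi_2 v$ (case $(2)$), so the datum reduces essentially to a single morphism of $\CC$ landing in $C$. A short computation shows $(\phi_1,\phi_2)$ is a retraction exactly when that distinguished component ($\phi_2$ in $(1)$, $\phi_1$ in $(2)$) is a split epimorphism of $\CC$; in particular $(0,g)$ and $(g,1)$ are themselves not retractions because $g$ is not split epi. When $(\phi_1,\phi_2)$ is not a retraction, its distinguished component is a non-section and hence factors through $g$ since $g$ is right almost split in $\CC$; combining this factorization with $\Ker g=\im f$ and the monicity of $f$ produces a morphism $(Z\st{v}\rt W)\rt(A\st{f}\rt B)$, resp.\ $(Z\st{v}\rt W)\rt(B\st{g}\rt C)$, and a routine diagram chase verifies that it is a genuine morphism of $\rm{H}(\CC)$ and composes with the right-hand map to give $(\phi_1,\phi_2)$. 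This settles $(1)$ and $(2)$.

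For $(3)$ I would apply the cokernel equivalence $\rm{Cok}\colon\CS(\CC)\rt\rm{Epi}(\CC)$ to the almost split sequence ending at $(C\st{1}\rt C)$ produced in \lemref{AlmostSplittrivialmonomorphisms}$(2)$. Since an equivalence of categories preserves almost split sequences, this yields an almost split sequence in $\rm{Epi}(\CC)$ ending at $\rm{Cok}(C\st{1}\rt C)=(C\rt 0)$, and unwinding the definitions writes it explicitly in terms of the data ($I$, the extension $e'$ of $e$, $\Coker e$, and the induced maps) already appearing in \lemref{AlmostSplittrivialmonomorphisms}$(2)$. The remaining task is to upgrade ``almost split in $\rm{Epi}(\CC)$'' to ``almost split in $\rm{H}(\CC)$'': one must check that the epimorphism of this sequence is right almost split against every test object $(Z\st{v}\rt W)$ of $\rm{H}(\CC)$, including those with $v$ not epi (for which ``non-retraction'' now means only that the restriction $\phi_1|_{\Ker v}$ is a non-section). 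I would rerun the same lifting argument as in $(1)$–$(2)$; the one new feature is that the ``bottom component'' of the factorizing morphism has to be extended along $\im v\hookrightarrow W$, and this can be arranged because the middle term was assembled from an \emph{injective} object $I$ (the injective envelope of $A$) — which is precisely why \lemref{AlmostSplittrivialmonomorphisms}$(2)$ is stated with $I$ rather than with an arbitrary object containing $A$.

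The step I expect to be the main obstacle is exactly this last one: an almost split sequence living in a full, extension-closed subcategory need not remain almost split in the ambient abelian category, so the right-almost-split property genuinely has to be re-established in $\rm{H}(\CC)$, and it is the injectivity built into the middle term of \lemref{AlmostSplittrivialmonomorphisms}$(2)$ that makes the required extension possible. As an alternative, more uniform route to all three parts, one could argue functorially through $\Theta_{\CC}\colon\rm{H}(\CC)\rt\CF(\CC)$ and $\Psi_{\CC}\colon\CS(\CC)\rt\CF(\underline{\CC})$ (Constructions \ref{Construction 4.1} and \ref{FirstCoonstr}) together with the standard description of the Auslander--Reiten translate in $\CF(\CC)$ and $\CF(\underline{\CC})$ via the Nakayama functor, converting these statements into computations of minimal projective presentations; that is heavier but avoids the ad hoc diagram chases.
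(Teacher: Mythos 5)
Your parts $(1)$ and $(2)$ are correct and, although phrased as a direct check in $\rm{H}(\CC)$ rather than being routed through the $\CS(\CC)$-sequences of \lemref{AlmostSplittrivialmonomorphisms}, they are essentially the paper's computation: the factorization comes from $g$ being right almost split in $\CC$ together with producing the missing component via $\Ker(g)=\im(f)$ and the monicity of $f$ (minor slip: ``non-section'' should read ``not a split epimorphism'' throughout).

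The genuine gap is in part $(3)$, exactly at the step you single out. After applying the cokernel functor you have an almost split sequence in $\rm{Epi}(\CC)$ ending at $(C\rt 0)$ with middle term $(I\oplus C\st{s}\rt \Coker(h))$, and for a non-retraction $(\phi_1,0):(Z\st{v}\rt W)\rt (C\rt 0)$ you propose to factor over the image of $v$ and then extend the bottom component of the resulting factorization along the monomorphism $\im(v)\hookrightarrow W$, ``because the middle term was assembled from the injective object $I$.'' But that bottom component lands in $\Coker(h)\cong\Coker(e)$, the cokernel of the injective envelope $e:A\rt I$, and extending a map along a monomorphism requires injectivity of its \emph{target}; $\Coker(e)$ is not injective in general (take $\CC=\mmod k[x]/(x^2)$, $A=k$, $I=k[x]/(x^2)$: then $\Coker(e)\cong k$ is not injective). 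So the injectivity of $I$ does not make the extension possible; its role is already exhausted in producing the $\CS(\CC)$-sequence of \lemref{AlmostSplittrivialmonomorphisms}(2). Note also that the reduction used in $(1)$--$(2)$ fails here: $\phi_1$ may be a split epimorphism of $\CC$ while $(\phi_1,0)$ is a non-retraction (only the non-splitness of $\phi_1|_{\Ker(v)}$ is guaranteed), so one cannot simply rerun the earlier lifting argument. The paper resolves this with the dual device, using enough projectives rather than injectives: choose a projective cover $p:P\rt W$, replace the test object by the epimorphism $(Z\oplus P\st{[v\ \ p]}\lrt W)$, which lies in $\rm{Epi}(\CC)$, factor the induced non-retraction there through $([0\ \ 1],0)$ using the almost split property in $\rm{Epi}(\CC)$, and then restrict the top component of that factorization back to $Z$; no extension of the bottom component is needed because it is already defined on all of $W$. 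Your proof of $(3)$ would need to be repaired along these lines.
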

\begin{proof}
$(1)$ Let $(0, d):(X\st{v}\rt Y)\rt (0\rt C)$ be a non-retraction. Then the morphism $(0, d):(0 \rt Y)\rt (0\rt C)$ in $\CS(\CC)$ is a non-retraction as well. Because of the first part of Lemma \ref{AlmostSplittrivialmonomorphisms}
  we know that the short exact sequence in the statement is almost split in $\CS(\CC)$, hence there exists $(0, t):(0\rt Y)\rt (A\st{f}\rt B)$ such that $(0, g)\circ (0, t)=(0, d)$. Since $gtv=0$, there is a morphism $s:X\rt A$ with $fs=tv.$ Now we see that the morphism $(0, d)$ factors
	through $(0, g)$ via $(s, t)$. So we are done this case.\\
	$(2)$ By applying the cokernel functor $\rm{Cok}$ on the almost sequence in $\CS(\CC)$ ending at $(0\rt C)$, as stated in Lemma \ref{AlmostSplittrivialmonomorphisms}(1) we get the short exact sequence in $(2)$ of this lemma, that is a short exact sequence in $\rm{Epi}(\CC)$. Since the cokernel functor is an equivalence, the  short exact sequence in $(2)$  is an almost split in $\rm{Epi}(\CC)$. Using the same argument of the first part of this lemma we can prove that it is also an almost split in $\rm{H}(\CC)$, as desired.
\end{proof}
\begin{definition} An indecomposable object  $(A\st{f}\rt B)$ in $\rm{H}(\CC)$ is called {\it Mor-proper} if it is not isomorphic to an object of the form either $(C\rt 0)$, $(C \st{1}\rt C)$ or $(0 \rt C)$.
\end{definition}
We have the following easy lemma.
\begin{lemma}\label{Lemma 5.3}
Let $\mathbb{A}=(A\st{f}\rt B)$ be an indecomposable  Mor-proper object in $\rm{H}(\CC)$. Then $\Theta_{\CC}(\mathbb{A})$  is an  indecomposable non-projective object in $\CF(\CC)$. Moreover, $\CC(-, A)\st{\CC(-, f)}\rt \CC(-, B)\rt\Theta_{\CC}(\mathbb{A})\rt0 $ is a minimal projective presentation in $\CF(\CC)$.
\end{lemma}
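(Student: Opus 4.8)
The plan is to analyze the minimal projective presentation
$\CC(-,A)\st{\CC(-,f)}\rt \CC(-,B)\rt \Theta_\CC(\mathbb A)\rt 0$
coming from Construction~\ref{Construction 4.1}, and deduce indecomposability and non-projectivity of $\Theta_\CC(\mathbb A)$ from the hypothesis that $\mathbb A=(A\st f\rt B)$ is indecomposable and Mor-proper. First I would check minimality of the presentation: a projective presentation $P_1\st{d}\rt P_0\rt M\rt 0$ in $\CF(\CC)$ is minimal precisely when $\im(d)\subseteq \rad P_0$ and $\Ker(d)\subseteq \rad P_1$, or, more usefully here, when no nonzero direct summand of $P_1$ is carried isomorphically onto a direct summand of $P_0$ and $d$ is left minimal onto its image. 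Translating through the Yoneda embedding (which is fully faithful), the data $(\CC(-,A)\st{\CC(-,f)}\rt\CC(-,B))$ is, up to isomorphism in $\rm H(\CC)$, just the object $\mathbb A$ itself sitting in the subcategory of $\rm H(\CC)$ of maps between projectives; so minimality of the presentation is equivalent to $\mathbb A$ having no direct summand of the form $(P\st 1\rt P)$ and no summand of the form $(P\rt 0)$ with $P$ projective. Since $\mathbb A$ is indecomposable and Mor-proper it is by definition not isomorphic to $(C\st 1\rt C)$ or $(C\rt 0)$ for any $C$, in particular not for projective $C$; hence the presentation is minimal.

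Next I would establish indecomposability of $\Theta_\CC(\mathbb A)$. The functor $\Theta_\CC$ is, by the remark following Construction~\ref{Construction 4.1} (using the same argument as \cite[Theorem 3.1.3]{AHS} and \cite[Appendix]{RZ}), full, dense, objective, with kernel the subcategory $\CU_\CC$ of objects $(X\st 1\rt X)$ and $(X\rt 0)$, and it induces an equivalence $\rm H(\CC)/\CU_\CC\simeq \CF(\CC)$. An objective functor reflects indecomposability of objects not lying in its kernel: if $\Theta_\CC(\mathbb A)\simeq M_1\oplus M_2$ with both $M_i\neq 0$, then since $\Theta_\CC$ is dense we may write $M_i\simeq\Theta_\CC(\mathbb A_i)$, and since $\Theta_\CC$ is full the idempotents splitting $\Theta_\CC(\mathbb A)$ lift to endomorphisms of $\mathbb A$; using that $\mathbb A$ has a local (in particular idempotent-complete) endomorphism ring together with objectivity of $\Theta_\CC$, one forces one of the $\mathbb A_i$ to differ from $0$ only by a summand in $\CU_\CC$, i.e. $M_i\simeq\Theta_\CC(\text{something in }\CU_\CC)=0$, a contradiction. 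Thus $\Theta_\CC(\mathbb A)$ is indecomposable. Finally, non-projectivity: the projective objects of $\CF(\CC)$ are exactly the representable functors $\CC(-,X)$, which up to the equivalence $\rm H(\CC)/\CU_\CC\simeq\CF(\CC)$ correspond to the classes of objects $(0\rt X)$. If $\Theta_\CC(\mathbb A)$ were projective it would be $\simeq\CC(-,X)$ for some indecomposable $X$, hence $\mathbb A$ would become isomorphic to $(0\rt X)$ in the quotient $\rm H(\CC)/\CU_\CC$; since $\Theta_\CC$ is objective and $\mathbb A$ is indecomposable and not in $\CU_\CC$, this would force $\mathbb A\simeq(0\rt X)$ in $\rm H(\CC)$ itself, contradicting the Mor-proper hypothesis.

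The main obstacle I anticipate is the careful bookkeeping in the indecomposability and non-projectivity arguments: making precise how an objective, kernel-$\CU_\CC$, full functor reflects the relevant structure on objects with local endomorphism ring, i.e. ensuring that liftings of idempotents and isomorphisms along $\Theta_\CC$ can only fail by summands inside $\CU_\CC$. Everything else — Yoneda full faithfulness, the characterization of minimal presentations, the identification of projectives in $\CF(\CC)$ with representables — is standard. Once the objectivity formalism is in hand, the three claims (minimality, indecomposability, non-projectivity) all fall out uniformly from "$\mathbb A$ is indecomposable, Mor-proper, hence avoids $\CU_\CC$ and the $(0\rt X)$-orbit."
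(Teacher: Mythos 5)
Your argument is correct, and since the paper states this lemma without proof (calling it easy), there is nothing to diverge from: your route --- Yoneda full faithfulness to transfer direct summands, Mor-properness plus indecomposability to exclude summands of the forms $(X\st{1}\rt X)$ and $(X\rt 0)$ (giving minimality), and the equivalence $\rm{H}(\CC)/\CU_{\CC}\simeq \CF(\CC)$ from the paper's own remark in Section 4 (giving indecomposability and non-projectivity) --- is exactly the standard reasoning the paper has in mind. Two cosmetic points only: in your minimality criterion the restriction ``with $P$ projective'' should be dropped, since summands of the representables $\CC(-,A)$, $\CC(-,B)$ correspond to arbitrary direct summands of $A$, $B$; and you should note explicitly that $\Theta_{\CC}(\mathbb{A})\neq 0$, which holds because $\Theta_{\CC}(\mathbb{A})=0$ would force $f$ to be a split epimorphism and hence the indecomposable $\mathbb{A}$ to be of the form $(X\rt 0)$ or $(X\st{1}\rt X)$, contradicting Mor-properness.
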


\begin{construction}\label{Construction 5.4}
Let $\mathbb{A}$ be a  Mor-proper object in $\rm{H}(\CC)$. Our assumption on $\rm{H}(\CC)$ implies that the existence of  an almost split sequence 
$$\delta: \ \ 0 \rt \mathbb{D}\st{\varrho}\rt \mathbb{C}\st{\xi}\rt \mathbb{A}\rt 0$$
in $\rm{H}(\CC)$. Set $\mathbb{D}=(D\st{w}\rt E)$, $\mathbb{C}=(C\st{v}\rt L)$, $\varrho=(\varrho_1, \varrho_2)$ and $\xi=(\xi_1, \xi_2)$. We  claim that the short exact sequences
$$0 \rt D\st{\varrho_1}\rt C\st{\xi_1}\rt A\rt 0$$
$$0 \rt E\st{\varrho_2}\rt L\st{\xi_2}\rt B\rt 0$$
are split. To prove the claim, consider the test maps: $(1, f):(A\st{1}\rt A)\rt (A\st{f}\rt B)$ and $(f, 1):(B\st{1}\rt B)\rt (A\st{f}\rt B).$ One can verify that those test maps are not split epimorphisms by use this fact that $\mathbb{A}$ is Mor-proper. By the property of being almost split of $\delta$, those test maps factor through $\xi$. The obtained factorizations imply our claim.	Now, by the Snake lemme we have the following commutative diagram in $\CC$ with the splitting lower two rows
$$\xymatrix{& 0 \ar[d] & 0 \ar[d] & 0 \ar[d]&&\\
	0 \ar[r] & \text{Ker}(w) \ar[d] \ar[r] &  \text{Ker} (v) \ar[d]
	\ar[r]^j & \text{Ker}(f) \ar[d]  &\\
	0 \ar[r] & D\ar[d]^w \ar[r] & C
	\ar[d]^v \ar[r] & A \ar[d]^f \ar[r] & 0\\
	0 \ar[r] & E  \ar[r] & L
 \ar[r] & B \ \ar[r] & 0 }
$$
Take the morphism $(i, 0):(\text{Ker}(f)\st{i}\rt 0)\rt \mathbb{A}$. Since again $\mathbb{A}$ is Mor-proper, one can see that the morphism $(i, 0)$ is not split epimorphism. Hence it factors through $\xi$. The factorization yields that $j$ is a split epimorphism. Hence the all rows in the above diagram are split. This observation gives us the following commutative diagram in $\CF(\CC)$ after applying the functor $\Theta_{\CC}$ on the above diagram
$$\xymatrix{& 0 \ar[d] & 0 \ar[d] & 0 \ar[d]&\dagger&\\
	0 \ar[r] & \CC(-, \text{Ker}(w)) \ar[d] \ar[r] &  \CC(-, \text{Ker} (v)) \ar[d]
	\ar[r]^j & \CC(-, \text{Ker}(f)) \ar[d]\ar[r] &0\\
	0 \ar[r] &\CC(-, D)\ar[d] \ar[r] & \CC(-, C)
	\ar[d]\ar[r] & \CC(-, A) \ar[d] \ar[r] & 0\\
	0 \ar[r] &\CC(-, E) \ar[d] \ar[r] & \CC(-, L)
	\ar[d] \ar[r] & \CC(-, B) \ar[d] \ar[r] & 0\\
	0\ar[r]	& \Theta_{\CC}(\mathbb{D}) \ar[r]\ar[d] & \Theta_{\CC}(\mathbb{C}) \ar[r]\ar[d]  & \Theta_{\CC}(\mathbb{A}) \ar[r]\ar[d] &0\\ & 0  & 0  & 0 &  }
$$
The lower sequence is indeed the image, $\Theta_{\CC}(\delta)$, of $\delta$ under the functor $\Theta_{\CC}$. This means that $\Theta_{\CC}$ keeps the exactness of $\delta.$
\end{construction}

\begin{proposition}\label{Proposition 5.6}
Keep in mind  the notation in Construction \ref{Construction 5.4}. The short exact sequence $\Theta_{\CC}(\delta)$ is an almost split sequence in $\CF(\CC)$.
\end{proposition}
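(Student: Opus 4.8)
The plan is to show that the epimorphism $\psi=(\psi_1,\psi_2):(C\oplus A\st{h}\rt D\oplus B)\rt \mathbb{A}$ occurring in $\eta_{\mathbb{A}}$ is right almost split in $\rm{H}(\CC)$, and then to conclude by the standard criterion for almost split sequences (see e.g.\ \cite{Ar1}): a non-split short exact sequence in an abelian Krull--Schmidt category whose left-hand term is indecomposable and whose epimorphism is right almost split is automatically almost split. Throughout I would use that, by the Remark in Section \ref{Subsection 3.2}, the functor $\Theta_{\CC}:\rm{H}(\CC)\rt \CF(\CC)$ is full, dense and objective with kernel $\CU_{\CC}$; that $\Theta_{\CC}$ carries $\eta_{\mathbb{A}}$ to $\eta_{\tilde{\mathbb{A}}}$ by the very construction of $\eta_{\mathbb{A}}$; and the explicit form delivered by the horseshoe lemma, namely $h=\left(\begin{smallmatrix} g & \gamma \\ 0 & f \end{smallmatrix}\right)$ for a suitable morphism $\gamma:A\rt D$, with the inclusion of $\eta_{\mathbb{A}}$ equal to $([1\ 0]^{t},[1\ 0]^{t})$ and $\psi=([0\ 1],[0\ 1])$.

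First I would verify the conditions on the end terms. Since $\Theta_{\CC}(\psi)$ is the epimorphism of the almost split sequence $\eta_{\tilde{\mathbb{A}}}$, it is not a retraction, so $\psi$ is not a retraction and $\eta_{\mathbb{A}}$ does not split. For the left-hand term $(C\st{g}\rt D)$: the row $\CC(-,C)\st{\CC(-,g)}\rt \CC(-,D)\rt K\rt 0$ fixed in Construction \ref{Construction 5.4} is a \emph{minimal} projective presentation of the indecomposable non-projective functor $K$, so $(C\st{g}\rt D)$ has no nonzero direct summand lying in $\CU_{\CC}$ (such a summand $(X\rt 0)$ or $(X\st{1}\rt X)$ would violate minimality of the presentation); since $\Theta_{\CC}(C\st{g}\rt D)=K$ is indecomposable, this forces $(C\st{g}\rt D)$ to be indecomposable. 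Finally $\mathbb{A}$ is indecomposable by hypothesis and, being Mor-proper, is not a direct summand of any object of $\CU_{\CC}$, because the indecomposable objects of $\CU_{\CC}$ are precisely those of the form $(X\rt 0)$ and $(X\st{1}\rt X)$.

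The heart of the argument is that $\psi$ is right almost split. After the routine reduction to a non-retraction $(a_1,a_2):\mathbb{Y}\rt \mathbb{A}$ with $\mathbb{Y}$ indecomposable, I would first show that $\Theta_{\CC}(a_1,a_2)$ is again a non-retraction: if it had a section $s$, then fullness gives $s=\Theta_{\CC}(s')$ and $(a_1,a_2)\,s'-1_{\mathbb{A}}$ is killed by $\Theta_{\CC}$, hence factors through some $\mathbb{U}\in\CU_{\CC}$ by objectivity; but $(a_1,a_2)\,s'$ lies in the radical of the local ring $\End(\mathbb{A})$, so $(a_1,a_2)\,s'-1_{\mathbb{A}}$ is invertible, whence $\mathbb{A}$ would be a summand of $\mathbb{U}$, contradicting Mor-properness. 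As $\eta_{\tilde{\mathbb{A}}}$ is almost split, $\Theta_{\CC}(a_1,a_2)$ then factors through its epimorphism $\Theta_{\CC}(\psi)$; lifting this factorization through the full functor $\Theta_{\CC}$ gives $c:\mathbb{Y}\rt (C\oplus A\st{h}\rt D\oplus B)$ with $\Theta_{\CC}((a_1,a_2)-\psi c)=0$, so $(a_1,a_2)-\psi c$ factors through $\CU_{\CC}$ by objectivity. It then remains to show that any morphism $\mathbb{Y}\rt \mathbb{A}$ factoring through $\CU_{\CC}$ factors through $\psi$; since $\CU_{\CC}$ is generated additively by the $(X\rt 0)$ and the $(X\st{1}\rt X)$, it suffices to treat morphisms out of these. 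A morphism $(X\st{1}\rt X)\rt \mathbb{A}$ necessarily has the shape $(a,fa)$ and factors through $\psi$ via $([0\ a]^{t},\,h\,[0\ a]^{t})$. A morphism $(X\rt 0)\rt \mathbb{A}$ is a map $a:X\rt A$ with $fa=0$, and it factors through $\psi$ precisely when $\gamma a\in \im(g)$ --- the one genuinely delicate point. I would obtain this from the construction of $\gamma$: composing $\CC(-,\gamma a)$ with the cover $\CC(-,D)\rt K$ vanishes (by the defining relation of $\gamma$ in the horseshoe lemma together with $\CC(-,fa)=0$), so $\CC(-,\gamma a)$ factors through $\Ker(\CC(-,D)\rt K)=\im(\CC(-,g))$ and, $\CC(-,X)$ being projective, lifts along $\CC(-,g)$; by the Yoneda lemma this produces $b':X\rt C$ with $gb'=\gamma a$. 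Assembling these facts shows $(a_1,a_2)$ factors through $\psi$, so $\psi$ is right almost split; combined with the second paragraph and the criterion above, $\eta_{\mathbb{A}}$ is an almost split sequence in $\rm{H}(\CC)$.

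The step I expect to be the main obstacle is exactly this last computation: extracting enough information about the connecting morphism $\gamma$ furnished by the horseshoe lemma to see that $\gamma a$ lies in $\im(g)$ whenever $fa=0$. The remaining ingredients are either bookkeeping with the explicit $2\times 2$ matrix presentations or routine consequences of $\Theta_{\CC}$ being full, dense and objective, together with the Krull--Schmidt property and Mor-properness of $\mathbb{A}$.
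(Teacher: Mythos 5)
Your proposal is correct and follows essentially the same route as the paper: transport the problem along $\Theta_{\CC}$ to the almost split sequence $\eta_{\tilde{\mathbb{A}}}$ in $\CF(\CC)$, use the criterion from \cite{Ar1} (non-split, indecomposable left term, right almost split epimorphism), and show that a non-retraction into $\mathbb{A}$ stays a non-retraction after applying $\Theta_{\CC}$. The only difference is one of completeness: the paper stops once $\Theta_{\CC}(\sigma)$ is seen to be a non-retraction, whereas you also carry out the lifting of the factorization back to $\rm{H}(\CC)$ and verify that the resulting error term, which factors through $\CU_{\CC}$, itself factors through $(\psi_1,\psi_2)$ --- in particular the computation that $\gamma a\in \im(g)$ when $fa=0$ --- details the paper leaves implicit.
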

\begin{proof}
By Lemma \ref{Lemma 5.3},  we first note 
that $\Theta_{\CC}(\mathbb{A})$ and $\Theta_{\CC}(\mathbb{D})$ are indecomposable. $\Theta_{\CC}(\mathbb{A})$ also is not split; otherwise, $\delta$ becomes split that is a contradiction. Indeed, if it was split, then $\CC(-, f)\oplus \CC(-, w)$ would be a minimal projective presentation for $\Theta_{\CC}(\mathbb{C})$. On the other hand, we observe by the diagram $(\dagger)$ in the construction another projective preservation $\CC(-, C)\st{\CC(-, v)}\rt \CC(-, L)\rt \Theta_{\CC}(\mathbb{C})\rt 0.$ Comparing these two projective presentations for $\Theta_{\CC}(\mathbb{C})$ and applying the Yoneda lemma provide us the following isomorphism in $\rm{H}(\CC)$
\[(C\st{v}\rt L)\simeq (D\st{w}\rt E)\oplus(A\st{f}\rt B)\oplus (X\st{1}\rt X)\oplus (Y\rt 0) \]
for some $X$ and $Y$ in $\CC.$ We know that all rows in the diagram $(\dagger)$ except the lower one are split. Hence $L\simeq E\oplus B$ and $C\simeq D\oplus A.$ Because of Krull-Schmidt property of $\CC$ and minimality of the morphisms $f$ and $w$, we deduce that $X=Y=0$. So the middle term in $\delta$ is a direct sum of the ending terms, that is a contradiction as we claimed.
Now, invoking \cite[Theorem 2.14]{Ar1},   it is enough to show that $\Theta_{\CC}(\varrho)$  and $\Theta_{\CC}(\xi)$ respectively are left and right almost split. We will do it only for $\Theta_{\CC}(\varrho)$. Let $q:H\rt \Theta_{\CC}(\mathbb{A})$ be a non-retraction and a projective presentation $\CC(-, T)\st{\CC(-, y)}\rt \CC(-, M)\rt H\rt 0$. Hence $\Theta_{\CC}((T\st{y}\rt M))=H.$ The morphism $q$ can be lifted to  a morphism between the   projective presentations. The lifted morphism induces,  via the Yoneda lemma,     the following morphism  
{\footnotesize \[ \xymatrix@R-2pc {  & T \ar[dd]^{y}  & A\ar[dd]~  \\   &  _{\ \ \ \ \ \  \ \ \  }\ar[r]^{w_1}_{w_2}  \ar[r]&_{\ \ \ \ \ }{f}   \\ &M & B}\]}
in $\rm{H}(\CC)$ such that $\Theta_{\CC}((w_1, w_2))=q$. The morphism $(w_1, w_2)$ is not retraction. Otherwise, it follows $q$ so is, a contradiction. Since $\delta$ is an almost split sequence,  $(w_1, w_2)$ factors thorough $\xi$ via
{\footnotesize \[ \xymatrix@R-2pc {  & T \ar[dd]^{y}  & C\ar[dd]~  \\   &  _{\ \ \ \ \ \  \ \ \  }\ar[r]^{l_1}_{l_2}  \ar[r]&_{\ \ \ \ \ }{v}   \\ &M & L}\]}
Now by applying the functor $\Theta_{\CC}$ on such a factorization, we see that the morphism $q$ factors through $\Theta_{\CC}(\xi)$ via $\Theta_{\CC}((l_1, l_2))$, as required. So we proved the claim.
\end{proof}	

\begin{definition}
	An indecomposable object $(A\st{f}\rt B)$ in $\CS(\CC)$ is said to be {\it Mono-proper} if it is not isomorphic to $(0 \rt C)$, $(C\st{1}\rt C)$ and $(\Omega_{\CC}(C)\hookrightarrow P)$, where $C$ is an indecomposable object in $\CC$ and $P$ a projective cover of $C$ in $\CC.$
\end{definition}
Hence, by definition, the objects of the forms $(0 \rt C)$ and  $(C\st{1}\rt C)$ are not both Mor-
and Mono-proper. \\
In the rest of this subsection we shall provide a similar way to construct the almost split sequences in $\CF(\underline{\CC})$ via the ones in $\CS(\CC)$ ending at an indecomposable Mono-proper object. 
\begin{construction}\label{ConstructionMonoSplit}
	Let $\mathbb{A}=(A\st{f}\rt B)$ be an indecomposable  Mono-proper object in $\CS(\CC)$.	
	 By Lemma \ref{Lemma 5.9} there is an almost split sequence in $\CS(\CC)$ ending at $\mathbb{A}$, namely, 
	{\footnotesize  \[ \xymatrix@R-2pc {  &  ~ X_1\ar[dd]^{{d}}~   & Z_1\ar[dd]^{h}~  & A\ar[dd]^{f} \\ \epsilon: \ \ 0 \ar[r] &  _{ \ \ \ \ } \ar[r]^{\phi_1}_{\phi_2}  &_{\ \ \ \ \ } \ar[r]^{\psi_1}_{\psi_2} _{\ \ \ \ \ }& _{\ \ \ \ \ \ }\ar[r] & 0 \\ & X_2 & Z_2 &B }\]}
Set $\mathbb{X}=(X_1\st{d}\rt X_2), \mathbb{Z}=(Z_1\st{h}\rt Z_2)$ and $\phi=(\phi_1, \phi_2)$, $\psi=(\psi_1, \psi_2)$.	By expanding the above diagram in $\CC$ we get the following commutative diagram
		$$\xymatrix{& 0 \ar[d] & 0 \ar[d] & 0 \ar[d]&  &\\
		0 \ar[r] & X_1 \ar[d]^{d} \ar[r]^{\phi_1} & Z_1 \ar[d]^{h}
		\ar[r]^{\psi_1} & A \ar[d]^{f} \ar[r] & 0\\
		0 \ar[r] & X_2\ar[d] \ar[r]^{\phi_2} & Z_2
		\ar[d] \ar[r]^{\psi_2} & B \ar[d] \ar[r] & 0\\
		0 \ar[r] & \text{Coker}(d) \ar[d] \ar[r]^{\mu_1} & \text{Coker}(h)
		\ar[d] \ar[r]^{\mu_2} & \text{Coker}(f) \ar[d] \ar[r] & 0\\
		& 0  & 0  & 0 & }
	$$
As discussed in \cite[Construction 5.4]{H}, one can see that all rows in the above diagram are split. The splitting rows allow us to obtain the following commutative diagram in $\CF(\CC)$ after applying the Yoneda functor on the above diagram  

	$$\xymatrix{& 0 \ar[d] & 0 \ar[d] & 0 \ar[d]&  &\\
	0 \ar[r] &\CC(-, X_1) \ar[d]^{\CC(-, d)} \ar[r]^{\CC(-, \phi_1)} & \CC(-, Z_1) \ar[d]^{\CC(-, h)}
	\ar[r]^{\CC(-, \psi_1)} & \CC(-, A) \ar[d]^{\CC(-, f)} \ar[r] & 0\\
	0 \ar[r] & \CC(-, X_2)\ar[d] \ar[r]^{\CC(-, \phi_2)} & \CC(-, Z_2)
	\ar[d] \ar[r]^{\CC(-, \psi_2)} & \CC(-, B) \ar[d] \ar[r] & 0\\
	0 \ar[r] & \CC(-, \text{Coker}(d)) \ar[d] \ar[r]^{\CC(-, \mu_1)} &\CC(-,  \text{Coker}(h))
	\ar[d] \ar[r]^{\CC(-, \mu_2)} &\CC(-,  \text{Coker}(f)) \ar[d] \ar[r] & 0\\
	0\ar[r]& \Psi_{\CC}(\mathbb{X})\ar[r]\ar[d]  & \Psi_{\CC}(\mathbb{Z})\ar[r] \ar[d]& \Psi_{\CC}(\mathbb{A})\ar[r]\ar[d] & 0\\ &0&0&0&}
$$
The above diagram shows that the image, $\Psi_{\CC}(\epsilon)$, of the short exact sequence $\epsilon$ under the functor $\Psi_{\CC}$ remains exact.
\end{construction}

\begin{proposition}\label{Proposition 5.10}
	Keep in mind  the notation in Construction \ref{ConstructionMonoSplit}. The short exact sequence $\Psi_{\CC}(\epsilon)$ is an almost split sequence in $\CF(\underline{C})$.	
\end{proposition}
\begin{proof}
	 See  \cite[Proposition 5.6]{H}.
\end{proof}
\begin{remark}\label{Rem5.11}
	Assume further $\CC$ is a $k$-dualizing variety. By definition, one can see easily the following commutative
	diagram
	\[ \xymatrix{  \CF(\CC) \ar[r]^{D} & \CF(\CC^{\rm{op}}) \\
		\CF(\underline{\CC}) \ar@{^(->}[u] \ar[r]^D & \CF((\underline{\CC})^{\rm{op}})  \ar@{^(->}[u] }\]
	Since we have the full dense functor $\mathcal{\CC}\rt \underline{\CC}$, we identify $\CF((\underline{\CC})^{\rm{op}})$	with the full subcategory of $\CF(\CC^{\rm{op}})$ consisting of those functors vanish on the projective objects in $\CC.$ The above diagram implies that $\underline{\CC}$ is a $k$-dualizing variety. Hence $\CF(\underline{\CC})$ has almost split sequences (see \cite[Propsition 2.5]{AR3}).
\end{remark}

\subsection{The case of $\rm{H}F_{\la}$} In this subsection we shall prove that the functor $\rm{H}F_{\la}$ preserves almost split sequences in $\rm{H}(\mmod \CA)$. Our proof is divided into two cases that the ending term is either  a Mor-proper or non-Mor-proper object in $\rm{H}(\mmod \CA)$.\\

\begin{proposition}\label{Proposition 5.11}
	Assume that $\eta:0 \rt \mathbb{A}\st{p}\rt \mathbb{B}\st{q}\rt \mathbb{C}\rt 0$ is 
	 an almost split sequence in $\rm{H}(\mmod \CA)$ ending at a 
	 non-Mor-proper object in $\rm{H}(\mmod \CA)$. Then, the short exact sequence $\rm{H}F_{\la}(\eta): 0 \rt \rm{H}F_{\la}(\mathbb{A})\st{\rm{H}F_{\la}(p)}\lrt \rm{H}F_{\la}(\mathbb{B})\st{\rm{H}F_{\la}(q)}\rt \rm{H}F_{\la}(\mathbb{C})\rt 0$ is an almost split sequence in $\rm{H}(\mmod \CB).$
\end{proposition}
\begin{proof}
We prove case by case  for when   that $\mathbb{C}$ is isomorphic to either proper object $(0 \rt C)$, $(C\st{1}\rt C)$ or proper object $(C\rt 0)$,  for some indecomposable object $C$ in $\mmod \CA$.  	The case $\mathbb{C}\simeq (0\rt C):$ we observe by Lemma \ref{Lemma 5.2} the almost split $\eta$ is isomorphic to a short exact sequence of  the following form
	{\footnotesize  \[ \xymatrix@R-2pc {  &  ~ A\ar[dd]^{1}~   & A\ar[dd]^{f}~  & 0\ar[dd] \\ 0 \ar[r] &  _{ \ \ \ \ } \ar[r]^{1}_{f}  &_{\ \ \ \ \ } \ar[r]^{0}_{g} _{\ \ \ \ \ }&  _{\ \ \ \ \ }\ar[r] & 0. \\ & A & B &C}\]}
where $0 \rt A\st{f}\rt B \st{g}\rt C\rt 0$ is an almost split sequence in $\mmod \CA.$ Applying the functor $\rm{H}F_{\la}$ on the above sequence  we reach to the following short exact sequence
	{\footnotesize  \[ \xymatrix@R-2pc {  &  ~ F_{\la}(A)\ar[dd]^{1}~   & F_{\la}(A)\ar[dd]^>>>>{F_{\la}(f)}~  & 0\ar[dd] \\ 0 \ar[r] &  _{ \ \ \ \ } \ar[r]^{1}_{F_{\la}(f)}  &_{\ \ \ \ \ } \ar[r]^{0}_{F_{\la}(g)} _{\ \ \ \ \ }&  _{\ \ \ \ \ }\ar[r] & 0. \\ & F_{\la}(A) & F_{\la}(B) &F_{\la}(C)}\]}

 We know by \cite[Theorem 3.6]{G} the short exact sequence $0 \rt F_{\la}(A)\st{F_{\la}(f)}\rt F_{\la}(B)\st{F_{\la}(g)}\rt F_{\la}(C)\rt 0$ is an almost split sequence in $\mmod \CB.$ Hence, again by Lemma \ref{Lemma 5.2} we infer that the latter short exact sequence in the above is an almost split sequence in $\rm{H}(\mmod \CB)$ ending at $(0 \rt F_{\la}(C))\simeq F_{\la}(\mathbb{C})$. By the uniqueness of almost split sequences we deduce the result. The case $\mathbb{C}\simeq (C\st{1}\rt C)$: This case is proved similarly, in this case  we use the structure  of almost split sequence ending at $(X\st{1}\rt X)$ given in Lemma \ref{Lemma 5.2}. \\
 For the last case $\mathbb{C }\simeq (C\rt 0)$  we need the following fact (see \cite[Proposition 3.1]{MO}) : For $(M\rt 0)$ in $\rm{H}(\mmod \CA)$, $\tau_{\rm{H}(\mmod \CA)}(M\rt 0)\simeq (\CN_{ \CA}(P_2)\st{\CN_{\CA}(q)}\rt \CN_{ \CA}(P_1))$, where $P_2\st{q}\rt P_1\rt M\rt 0$ is a minimal projective resolution in $\mmod \CA.$ The same fact is also true for $(M\rt 0)$ given in $\rm{H}(\mmod \CB)$. \\
 Take a minimal projective resolution $Q_2\st{p}\rt Q_1\rt C$ in $\mmod \CA$. The fact follows that $\tau_{\rm{H}(\mmod \CA)}(C\rt 0)\simeq (\CN_{ \CA}(Q_2)\st{\CN_{\CA}(p)}\rt \CN_{ \CA}(Q_1))$. Here $\CN_{\CA}$ denotes the Nakayama functor of $\mmod \CA$. Hence we have 
 $$\text{H}\text{F}_{\la}(\mathbb{A})\simeq \text{H}\text{F}_{\la}(\tau_{\text{H}(\mmod \CA)}(C\rt 0))\simeq (F_{\la}\CN_{ \CA}(Q_2)\st{F_{\la}\CN_{\CA}(p)}\rt F_{\la}\CN_{ \CA}(Q_1)) .$$
 Using these points that the push-down functor $F_{\la}$ preserves the minimality and the  Nakayama functor ($F_{\la}\CN_{ \CA}\simeq \CN_{\CB}F_{\la}$) and   use again of the aforementioned fact and of course the  above isomorphisms we get 
 $$\text{H}\text{F}_{\la}(\mathbb{A})\simeq (\CN_{ \CB}F_{\la}(Q_2)\st{\CN_{\CB}F_{\la}(p)}\rt \CN_{ \CB}F_{\la}(Q_1))\simeq \tau_{\CF(\mmod \CB)}(F_{\la}(C)\rt 0)\simeq \tau_{\CF(\mmod \CB)}(\text{H}\text{F}_{\la}(\mathbb{C}))$$
 Now by following the same argument given in pages 90-91 of \cite{G}, we can prove $\text{H}\text{F}_{\la}(\eta)$ is an almost split sequence. Indeed, to see $\rm{H}F_{\la}(\eta)$ is an almost split sequence, now since we know  by the above $\tau_{\CF(\mmod \CB)}(\text{H}\text{F}_{\la}(\mathbb{C}))\simeq \text{H}\text{F}_{\la}(\mathbb{A})$ and using this fact $\rm{H}F_{\la}(\eta)$ does not split due to \cite[Lemma 2.7]{BL},   it suffices by \cite[Proposition 2.2, page 148]{AuslanreitenSmalo} to prove that any non-automorphism over $\text{H}\text{F}_{\la}(\mathbb{A})$ factors through $\text{H}\text{F}_{\la}(p)$.  To do this,    the adjunction isomorphism 
 $$\mmod \CB(\text{H}\text{F}_{\la}(\mathbb{A}), \text{H}\text{F}_{\la}(\mathbb{A}) )\simeq \mmod \CA(\mathbb{A},\text{H}\text{F}_{\bullet}\circ \text{H}\text{F}_{\la}(\mathbb{A}) $$
 sends any endomorphism of $\text{H}\text{F}_{\la}(\mathbb{A})$ onto a family $(f_g)_{g \in G}:\mathbb{A}\rt \oplus_{g \in G}{}^g\mathbb{A}$. It can be seen easily $f$ is a non-authomorphism if and only if so is  $f_e$. In this case the family  $(f_g)_{g \in G}$ factors trough $p$ since $\eta$ is an almost split sequence. Thanks to the adjunction we observe that $f$ factors through $\text{H}\text{F}_{\la}(p)$, as desired. So we are done. 
\end{proof}

In the Mor-proper case, Construction \ref{ConstructionMonoSplit},  making a connection between the almost split sequences in  $\rm{H}(\CC)$ and $\CF(\CC)$, is  helpful. For the  use of the construction we need to prove that the morphism categories $\rm{H}(\mmod \CA)$ and $\rm{H}(\mmod \CB)$ have almost split sequences. The case $\rm{H}(\mmod \CB)$ is clear since it equivalent to the module category of triangular matrix algebra of some finite dimensional  algebra. For the case $\rm{H}(\mmod \CA)$ we prove in the following. Before proving we need the next lemma.\\
Let $\La$ be a finite dimensional algebra and $\CP(\mmod\La)$ denotes the subcategory of $\rm{H}(\mmod \La)$ consisting of all objects $(P\st{f}\rt Q)$ with $P$ and $Q$ projective modules in $\mmod \La.$
\begin{lemma}(\cite[Theorem 5.1]{Ra})\label{Lemma 5.14}
	The exact category $\CP(\mmod \La)$ has almost split sequences. 
\end{lemma}
\begin{remark}There is an alternative proof of the above lemma by showing that  $\CP(\mmod \La)$ is functorially finite subcategory in $\rm{H}(\mmod \La)$. Then one can get the result by \cite[Theorem 2.4]{ASm}. To show the first claim, one may use \cite[Theorem A]{EHHS} which implies that $\CP(\La)$ is contravariantly finite. Due to \cite[Corollary 0.3]{KS} since $\CP(\La)$ is resolving, it is also covariantly finite, so we get the claim.
	\end{remark}
\begin{lemma}
	The morphism category $\rm{H}(\mmod \CA)$ has almost split sequences.
\end{lemma}
\begin{proof}
Let $\mathbb{A}=(A\st{f}\rt B)$ be a non-projective object. Since $\mmod \CA$ is locally bounded, there are only finitely many indecomposable objects in $\mmod \CA$, say $\{X_1, \cdots X_n\}$, such that there is  a non-zero map either  from $A$ or $B$ to $X_i$ or vice versa. Denote $X=X_1\oplus \cdots\oplus X_n$ and $\mathcal{D}=\rm{add}\mbox{-}X$. Set $\La=\rm{End}_{\mmod \CA}(X)$. By the evaluation functor $e_X:\CF(\mathcal{D})\rt \mmod \La, L\mapsto L(X)$,  we can identify the functor category $\CF(\mathcal{D})$ with the module category $\mmod \La$. According to the Lemma \ref{Lemma 5.14} there exists the following almost split sequence in $\CP(\CF(\mathcal{D}))$
	{\footnotesize  \[ \xymatrix@R-2pc {  &  ~ \mathcal{D}(-, G)\ar[dd]  & \mathcal{D}(-, E)\ar[dd] & \mathcal{D}(-, A)\ar[dd] \\ \delta: \ \ 0 \ar[r] &  _{ \ \ \ \ } \ar[r]  &_{\ \ \ \ \ } \ar[r]&  _{\ \ \ \ \ }\ar[r] & 0. \\ & \mathcal{D}(-, K) & \mathcal{D}(-, D) &\mathcal{D}(-, B)}\]}
We ignore labeling the terms in the above short exact sequence. Using the Yoneda lemma gives us the following short exact sequence in $\rm{H}(\mmod \CA)$
	{\footnotesize  \[ \xymatrix@R-2pc {  &  ~ G\ar[dd]  & E\ar[dd] & A\ar[dd] \\ 0 \ar[r] &  _{ \ \ \ \ } \ar[r]  &_{\ \ \ \ \ } \ar[r]&  _{\ \ \ \ \ }\ar[r] & 0. \\ & K &  D &B}\]}
Considering this fact that  for any arbitrary object $(M\st{g}\rt N)$ with a non-zero map to $\mathbb{A}$, we may assume that $M$ and $N$ have indecomposable direct summands isomorphic to the $X_i,$ we can prove that the  almost split sequence  $\delta$ in  $\CP(\CF(\mathcal{D}))$ implies that  so is the above  induced short exact sequence in $\rm{H}(\mmod \CA)$. So we are done.
\end{proof}
Therefore, the above lemma allows us to apply our results in the preceding subsection.
We need also  the following lemma which says that the the functor $\Phi$ preserves the Nakayama functor.
\begin{lemma}\label{Lemma 5.11}
	For any $M \in \mmod \CA,$ there is a natural isomorphism $\Phi(D_{\mmod \CA}(\widehat{\CA}(M, -))\simeq D_{\mmod \CB}(\widehat{\CB}(F_{\la}(M), -))$. In particular, $\Phi(\CN_{\mmod \CA}(E))\simeq \CN_{\mmod \CB}(\Phi(E))$, for any $E \in \CF(\mmod \CA)$.
	\end{lemma}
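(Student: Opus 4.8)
The statement has two parts, and the second ("in particular") follows formally from the first once one recalls how the Nakayama functor $\CN_{\mmod\CA}$ is built: by definition $\CN_{\mmod\CA} = D_{\mmod\CA}\circ(-)^{*}$, where $(-)^{*}$ sends $E$ to the $\mmod\CA^{\op}$-module $X\mapsto \CF(\mmod\CA)(E,\widehat{\CA}(-,X))$, and on representables one has $\CN_{\mmod\CA}(\widehat{\CA}(M,-))\simeq D_{\mmod\CA}(\widehat{\CA}(M,-))$ (stated in the excerpt, in the dualizing-variety discussion). Here I am reading $\widehat{\CA}(M,-)$ as the functor $\CF(\mmod\CA^{\op})$ arising from the injective side; so once the first isomorphism $\Phi(D_{\mmod\CA}(\widehat{\CA}(M,-)))\simeq D_{\mmod\CB}(\widehat{\CB}(F_{\la}(M),-))$ is established for all $M$, the general case for $E\in\CF(\mmod\CA)$ is obtained by taking a minimal projective presentation $\widehat{\CA}(-,M_1)\to\widehat{\CA}(-,M_0)\to E\to 0$, applying the (exact, by \cite[Theorem 5.5]{P}) functor $\Phi$, and using that $\Phi$ commutes with the cokernel/Yoneda formalism via Proposition~\ref{Prop 4.2}; the Nakayama functor is right exact on $\CF$, so it is determined by its values on representables and by presentations, and $\Phi$ intertwines the two sides termwise.

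**Step one: reduce to representables via duality.** First I would unwind $\Phi$ on the specific functor $D_{\mmod\CA}(\widehat{\CA}(M,-))$. Recall $\Phi(T)=\widehat{T}\circ F_{\bullet}$, i.e. for $X\simeq F_{\la}(N)$ in $\mmod\CB$ one has $\Phi(T)(X)=\bigoplus_{g\in G}T({}^{g}N)$. Taking $T=D_{\mmod\CA}(\widehat{\CA}(M,-))$, this value is $\bigoplus_{g\in G}D\,\widehat{\CA}(M,{}^{g}N) = \bigoplus_{g\in G}D\,\mmod\CA(M,{}^{g}N)$. On the other side, $D_{\mmod\CB}(\widehat{\CB}(F_{\la}(M),-))(X)=D\,\mmod\CB(F_{\la}(M),F_{\la}(N))$. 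So the whole claim reduces to the $G$-precovering isomorphism
$$\bigoplus_{g\in G}\mmod\CA(M,{}^{g}N)\;\xrightarrow{\ \sim\ }\;\mmod\CB(F_{\la}(M),F_{\la}(N)),$$
which is exactly $(F_{\la})_{M,N}=\nu_{M,N}$ from Theorem~\ref{Theorem 2.2}(1), dualized over $k$. The content, then, is to check that this pointwise identification is \emph{functorial} in the second variable, i.e. compatible with the $\mmod\CB$-module structure on both sides (the action of morphisms $X\to X'$ in $\mmod\CB$), and that it is compatible with the $G$-action so that it descends correctly. This is where one must be careful about the choices of preimages $x_a$ and the $G$-stabilizer $\delta^{F}$ entering the definition of $\Phi$ on morphisms.

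**Step two: naturality bookkeeping.** I would verify naturality by a direct comparison on morphisms: take $\beta:X\to X'$ in $\mmod\CB$, lift it (using density of $F_{\la}$ and Theorem~\ref{Theorem 2.2}(1)) to a family $(\beta_{g})_{g}$ of $\CA$-morphisms up to the stabilizer $\delta^{F}$, and check that the two resulting maps $\bigoplus_{g}\mmod\CA(M,{}^{g}N)\to\bigoplus_{g}\mmod\CA(M,{}^{g}N')$ — one coming from $\widehat{T}\circ F_{\bullet}$ applied to $\beta$, the other from transporting the $\mmod\CB$-action through $\nu$ — agree. The matrix bookkeeping is precisely the same computation that appears in the proof that $(F_{\la})_{X,Y}=\nu_{X,Y}$ and in Propositions~\ref{Prop 3.2}–\ref{Prop 4.2}; morally it is "$F_{\la}$ is an adjoint-triple precovering, so everything it touches is natural," but the $D$-twist means one works on $\mmod\CA^{\op}$, which is again a dualizing variety, so the same formalism applies verbatim.

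**The main obstacle.** The genuinely delicate point is not the existence of the isomorphism — that is immediate from Theorem~\ref{Theorem 2.2}(1) and the definition of $\Phi$ — but pinning down that the identification is natural \emph{simultaneously} in the module structure and compatible with everything already built, in particular that it is the \emph{same} isomorphism that makes the square in Proposition~\ref{Prop 4.2} (and its injective-side analogue) commute, so that the "in particular" clause genuinely follows by a presentation argument rather than requiring a fresh computation. Equivalently: one must confirm that $\Phi$, which was defined via $\widehat{(-)}\circ F_{\bullet}$ and the fixed choices $x_a$, really does send the representable $D\,\widehat{\CA}(M,-)\cong\CN_{\mmod\CA}(\widehat{\CA}(-,M))$ to $\CN_{\mmod\CB}(\widehat{\CB}(-,F_{\la}(M)))\cong\Phi(\CN_{\mmod\CA}(\widehat{\CA}(-,M)))$ coherently with $\Phi$ applied to $\widehat{\CA}(-,M)$ itself; this forces a short check that $\Phi$ commutes with $D$ on representables, i.e. a diagram of the shape $\Phi\circ D\simeq D\circ\Phi'$ where $\Phi'$ is the analogue of $\Phi$ for $\mmod\CA^{\op}\to\mmod\CB^{\op}$ — and this in turn rests on the two-sided form of the push-down isomorphism in Theorem~\ref{Theorem 2.2}(1) (both $(F_{\la})^{X,Y}$ and $(F_{\la})_{X,Y}$). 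I expect that once this compatibility is recorded, the rest is routine diagram chasing with the horseshoe/presentation formalism already used throughout Section~\ref{Subsection 3.2}.
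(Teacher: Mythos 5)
Your proposal is correct and follows essentially the same route as the paper's proof: evaluate $\Phi(D_{\mmod \CA}(\widehat{\CA}(M,-)))$ at $X\simeq F_{\la}(N)$, unwind $\Phi(T)=\widehat{T}\circ F_{\bullet}$ together with $F_{\bullet}F_{\la}\simeq \oplus_{g\in G}{}^g(-)$, and identify the result with the $k$-dual of the $G$-precovering isomorphism $(F_{\la})_{M,N}$, the representable case then giving the Nakayama statement. The only detail worth flagging is the interchange of $D$ with the a priori infinite direct sum (dual of a sum is a product), which the paper justifies by the finiteness of the supports of $M$ and $N$ and the free $G$-action --- precisely the finiteness your ``dualized over $k$'' step uses silently.
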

\begin{proof}
		Let $M$ be in $\mmod \CA$ and $X\in \mmod \CB$. Since $F_{\la}$ is dense, there is $N \in \mmod \CA$ such that $F_{\la}(N)\simeq X.$ To prove  the lemma, consider the following sequence  of natural isomorphisms
\begin{align*}
\Phi(D_{\mmod \CA}(\widehat{\CA}(M, -))(X)&\simeq \Phi(D_{\mmod \CA}(\widehat{\CA}(M, -))(F_{\la}(N)) \\
&\simeq \reallywidehat{D_{\mmod \CA}(\widehat{\CA}(M, -))}\circ F_{\bullet}(F_{\la}(N)) \\
& \simeq \reallywidehat{D_{\mmod \CA}(\widehat{\CA}(M, -))}(F_{\bullet}(F_{\la}(N)))\\ & \simeq \reallywidehat{D_{\mmod \CA}(\widehat{\CA}(M, -))}(\oplus_{g \in G} {}^gN)\\ &\simeq \oplus_{g \in G} D(\widehat{\CA}(M, N))\\ & \simeq D(\widehat{\CA}(M , \oplus_{g \in G}{}^gN))\\ &\simeq D(\widehat{\CB}(F_{\la}(M), F_{\la}(N))) \\ &\simeq D_{\mmod \CB}(\widehat{\CB}(F_{\la}(M), -)) (F_{\la}(N)) \\ &\simeq D_{\mmod \CB}(\widehat{\CB}(F_{\la}(M), -)) (X)
\end{align*}
Since $G$ acts freely on the objects of $\CA$ and supports of $M$ and $N$ are finite, we get the commutativity of direct sums in the 6-th isomorphism. For the 7-th isomorphism we apply the fact that $F_{\la}$ is a $G$-precovering, see Theorem \ref{Theorem 2.2}. 	 
\end{proof}
\begin{proposition}\label{Proposition 5.16}
	Assume that $\eta:0 \rt M \st{f}\rt N\st{g}\rt K\rt 0$ is an almost split sequence in $\CF(\mmod \CA)$.  Then, the short exact sequence $\Phi(\eta): 0 \rt \Phi(M)\st{\Phi(f)}\lrt \Phi(N)\st{\Phi(g)}\rt \Phi(K)\rt 0$ is an almost split sequence in $\CF(\mmod \CB).$
\end{proposition}
\begin{proof}
	 Take a minimal projective presentation $\widehat{\CA}(-, X_1)\st{\widehat{\CA}(-, w)}\rt \widehat{\CA}(-, X_0)\rt K\rt 0$ of $K$ in $\CF(\mmod \CA).$   As mentioned in the beginning of this section, we have $0 \rt \tau_{\CF(\mmod \CA)}(K)\rt D_{\mmod \CA}(\widehat{\CA}(X_1, -))\st{D_{\mmod \CA}(\widehat{\CA}(w, -))}\rt D_{\mmod \CA}(\widehat{\CA}(X_0, -))$. By applying $\Phi$ on the minimal projective presentation and in view of Lemma \ref{Lemma 5.11}, we obtain $$\Phi(\tau_{\CF(\mmod \CA)}(K))\simeq \Ker(D_{\mmod \CB	}(\widehat{\CB}(F_{\la}(X_1), -)\st{D_{\mmod \CB}(\widehat{\CB}(F_{\la}(w), -))}\lrt D_{\mmod \CB}(\widehat{\CB}(F_{\la}(X_0), -)).$$ Since $F_{\la}$ preserves a minimal morphism, we deduce that $\Phi(M)\simeq \Phi(\tau_{\CF(\mmod \CA)}(K))\simeq \tau_{\CF(\mmod \CB)}(\Phi(K)).$ Now by following the same argument given in pages 90-91 of \cite{G}, we can prove $\Phi(\eta)$ is an almost split sequence (see also the end of proof of  Proposition \ref{Proposition 5.11}). 
\end{proof}
\begin{proposition}\label{Proposition 5.13}
	Assume that $\delta: \ \ 0 \rt \mathbb{D}\st{\varrho}\rt \mathbb{C}\st{\xi}\rt \mathbb{A}\rt 0$ is an almost split sequence in $\rm{H}(\mmod \CA)$ ending at a Mor-proper object in $\rm{H}(\mmod \CA)$. Then, the short exact sequence $\rm{H}F_{\la}(\delta): 0 \rt \rm{H}F_{\la}(\mathbb{D})\st{\rm{H}F_{\la}(\varrho)}\lrt \rm{H}F_{\la}(\mathbb{C})\st{\rm{H}F_{\la}(\xi)}\rt \rm{H}F_{\la}(\mathbb{A})\rt 0$ is an almost split sequence in $\rm{H}(\mmod \CB).$
\end{proposition}
\begin{proof}
	 We observe by the commutative diagram in Proposition \ref{Prop 4.2}, the isomorphism  $\Phi\circ \Theta_{\mmod \CA}(\delta)\simeq \Theta_{\mmod \CB}\circ \rm{H}F_{\la}(\delta)$ of the short exact sequences in $\CF(\mmod \CB)$. Here $\Phi\circ \Theta_{\mmod \CA}(\delta)$ and $ \Theta_{\mmod \CB}\circ \rm{H}F_{\la}(\delta)$ mean the short exact sequences in $\CF(\mmod \CB)$ obtained by applying component-wisely the functors $\Phi\circ \Theta_{\mmod \CA}$ or  $ \Theta_{\mmod \CB}\circ \rm{H}F_{\la}$ on $\delta$, respectively. We observe by Proposition \ref{Proposition 5.6}, $\Theta_{\mmod \CA}(\delta)$ is an almost split sequence  in $\CF(\mmod \CA)$, and also by Proposition \ref{Proposition 5.16}, $\Phi\circ \Theta_{\mmod \CA}(\delta)$ is an almost split sequence in $\CF(\mmod \CB)$. Hence by the aforementioned isomorphism $\Theta_{\mmod \CB}\circ \rm{H}F_{\la}(\delta)$  is an almost split sequence in $\CF(\mmod \CB)$ as well.
	According to Construction \ref{FirstCoonstr} the short exact sequence $\delta$ induces the following commutative diagram with the splitting rows  and exact columns in $\mmod \CA$
	 
	 $$\xymatrix{& 0 \ar[d] & 0 \ar[d] & 0 \ar[d]&&\\
	 	0 \ar[r] & \text{Ker}(w) \ar[d] \ar[r] &  \text{Ker} (v) \ar[d]
	 	\ar[r]^j & \text{Ker}(f) \ar[d] \ar[r] &0\\
	 	0 \ar[r] & D\ar[d]^w \ar[r] & C
	 	\ar[d]^v \ar[r] & A \ar[d]^f \ar[r] & 0\\
	 	0 \ar[r] & E  \ar[r] & L
	 	\ar[r] & B \ \ar[r] & 0 }
	 $$

Here we follow the same notation as Construction \ref{Construction 5.4}. By applying the push-down functor $F_{\la}$ on the above diagram  we get the following commutative diagram again with the splitting rows and exact columns in $\mmod \CB$

 $$\xymatrix{& 0 \ar[d] & 0 \ar[d] & 0 \ar[d]&&\\
	0 \ar[r] & F_{\la}(\text{Ker}(w)) \ar[d] \ar[r] &  F_{\la}(\text{Ker} (v)) \ar[d]
	\ar[r] & F_{\la}(\text{Ker}(f)) \ar[d] \ar[r] &0\\
	0 \ar[r] & F_{\la}(D)\ar[d]^{F_{\la(w)}} \ar[r] & F_{\la}(C)
	\ar[d]^{F_{\la}(v)} \ar[r] & F_{\la}(A) \ar[d]^{F_{\la}(f)} \ar[r] & 0\\
	0 \ar[r] & F_{\la}(E)  \ar[r] & F_{\la}(L)
	\ar[r] & F_{\la}(B) \ \ar[r] & 0 }$$
The lower last two rows are the extended version of $\rm{H}F_{\la}(\delta)$ in $\mmod \CB$. Next, we apply the Yoneda functor to obtain the following diagram in $\CF(\mmod \CB)$ with exact rows and columns
	$$\xymatrix{& 0 \ar[d] & 0 \ar[d] & 0 \ar[d]&\dagger&\\
	\mathbf{X}_1: \ \ \	0 \ar[r] & \widehat{\CB}(-, F_{\la}(\text{Ker}(w))) \ar[d] \ar[r] &  \widehat{\CB}(-, F_{\la}(\text{Ker} (v))) \ar[d]
		\ar[r] & \widehat{\CB}(-, F_{\la}(\text{Ker}(f))) \ar[d]\ar[r] &0\\
\mathbf{X}_2: \ \ \		0 \ar[r] &\widehat{\CB}(-, F_{\la}(D))\ar[d] \ar[r] & \widehat{\CB}(-, F_{\la}(C))
		\ar[d]\ar[r] & \widehat{\CB}(-, F_{\la}(A)) \ar[d] \ar[r] & 0\\
\mathbf{X}_3: \ \ \		0 \ar[r] &\widehat{\CB}(-, F_{\la}(E)) \ar[d] \ar[r] & \widehat{\CB}(-, F_{\la}(L))
		\ar[d] \ar[r] & \widehat{\CB}(-, F_{\la}(B)) \ar[d] \ar[r] & 0\\
	\mathbf{Y}: \ \ \	0\ar[r]	& \Theta_{\mmod \CB}(\text{H}\text{F}_{\la}(\mathbb{D})) \ar[r]\ar[d] & \Theta_{\mmod \CB}(\text{H}\text{F}_{\la}(\mathbb{C })) \ar[r]\ar[d]  & \Theta_{\mmod \CB}(\text{H}\text{F}_{\la}(\mathbb{A})) \ar[r]\ar[d] &0\\ & 0  & 0  & 0 &  }
	$$ 
By the observation given in the beginning we see that  the lower short exact sequence is    an almost split sequence in $\CF(\mmod \CB)$.	On the other hand, since $\rm{H}F_{\la}(\mathbb{A})$ is a Mor-proper object in $\rm{H}(\mmod \CB)$, by applying  this turn Construction \ref{Construction 5.4} for $\CC=\mmod \CB$, there exists the following almost split sequence in $\rm{H}(\mmod \CB)$
	 	$$\delta': \ \ 0 \rt \mathbb{D}'\st{\varrho'}\rt \mathbb{C}'\st{\xi'}\rt \rm{H}F_{\la}(\mathbb{A})\rt 0,$$
which induces the following commutative diagram  with the splitting rows and exact columns 	in $\mmod \CB$  $$\xymatrix{& 0 \ar[d] & 0 \ar[d] & 0 \ar[d]&&\\
		0 \ar[r] & \text{Ker}(w') \ar[d] \ar[r] &  \text{Ker} (v') \ar[d]
		\ar[r] & F_{\la}(\text{Ker}(F_{\la}(f))) \ar[d] \ar[r] &0\\
		0 \ar[r] & D'\ar[d]^{w'} \ar[r] & C'
		\ar[d]^{v'} \ar[r] & F_{\la}(A) \ar[d]^{F_{\la}(f)} \ar[r] & 0\\
		0 \ar[r] & E'  \ar[r] & L'
		\ar[r] & F_{\la}(B) \ \ar[r] & 0 }$$ 	
\end{proof}
By applying the Yoneda functor on the above diagram we get  the following commutative  diagram in $\CF(\mmod \CB)$ with exact rows and columns
$$\xymatrix{& 0 \ar[d] & 0 \ar[d] & 0 \ar[d]&\dagger\dagger&\\
\mathbf{X}'_1: \ \ \	0 \ar[r] & \widehat{\CB}(-, \text{Ker}(w')) \ar[d] \ar[r] &  \widehat{\CB}(-,\text{Ker}(v')) \ar[d]
	\ar[r] & \widehat{\CB}(-, F_{\la}(\text{Ker}(f))) \ar[d]\ar[r] &0\\
\mathbf{X}'_2: \ \ \	0 \ar[r] &\widehat{\CB}(-, D')\ar[d] \ar[r] & \widehat{\CB}(-, C')
	\ar[d]\ar[r] & \widehat{\CB}(-, F_{\la}(A)) \ar[d] \ar[r] & 0\\
\mathbf{X}'_3: \ \ \	0 \ar[r] &\widehat{\CB}(-, E') \ar[d] \ar[r] & \widehat{\CB}(-, L')
	\ar[d] \ar[r] & \widehat{\CB}(-, F_{\la}(B)) \ar[d] \ar[r] & 0\\
\mathbf{Y}': \ \ \	0\ar[r]	& \Theta_{\mmod \CB}(\mathbb{D}') \ar[r]\ar[d] & \Theta_{\mmod \CB}(\mathbb{C}') \ar[r]\ar[d]  & \Theta_{\mmod \CB}(\text{H}\text{F}_{\la}(\mathbb{A})) \ar[r]\ar[d] &0\\ & 0  & 0  & 0 &  }
$$ 
Proposition \ref{Proposition 5.6} implies that the lower row is an almost split sequence in $\CF(\mmod \CB)$. Since both sequences in the lower rows of the diagram $(\dagger)$ and $(\dagger\dagger)$ are almost split,  we deduce that  there are isomorphic. To complete the proof we make a trick as follows. Viewing the rows of $(\dagger)$ and $(\dagger\dagger)$ as complexes over $\CF(\mmod \CB)$ (or as representations of the quiver $\bullet\rt \bullet \rt \bullet$ over $\mmod \CB$) we obtain the following diagram in the category of complexes over  $\CF(\mmod \CB)$ 
	\[\xymatrix{0 \ar[r] & \mathbf{X}_1  \ar[r] & \mathbf{X}_2  \ar[r] & \mathbf{X}_3  \ar[r] & \mathbf{Y}\ar[d]^{\mathbf{a}} \ar[r] & 0 \\ 0  \ar[r] & \mathbf{X}'_1\ar[r] &\mathbf{X}'_2\ar[r] & \mathbf{X}'_3\ar[r] & \mathbf{Y}' \ar[r] & 0}\]

with $\mathbf{a}$ an isomorphism of complexes. Since all $\mathbf{X}_i$ and $\mathbf{X}'_i$, $1 \leqslant i \leqslant 3,$ are split complexes of the projective objects, hence they are projective objects in the category of complexes. In fact, the induced projective resolutions in the category of complexes are minimal because of the minimal projective resolutions in $\CF(\mmod \CB)$ lying in the ending columns of diagrams ($\dagger$) and ($\dagger\dagger$). So we can lift $\mathbf{a}$ to a chain map between the projective resolutions of $\mathbf{Y}$ and $\mathbf{Y}'$, as shown in below, 
	\[\xymatrix{0 \ar[r] & \mathbf{X}_1  \ar[r]\ar[d]^{\mathbf{b}_1} & \mathbf{X}_2  \ar[r]\ar[d]^{\mathbf{b}_2} & \mathbf{X}_3 \ar[d]^{\mathbf{b_3}} \ar[r] & \mathbf{Y}\ar[d]^{\mathbf{a}} \ar[r] & 0 \\ 0  \ar[r] & \mathbf{X}'_1\ar[r] &\mathbf{X}'_2\ar[r] & \mathbf{X}'_3\ar[r] & \mathbf{Y}' \ar[r] & 0.}\]
The piece  \[\xymatrix{ \mathbf{X}_2  \ar[r]\ar[d]^{\mathbf{b}_2} & \mathbf{X}_3 \ar[d]^{\mathbf{b}_3}  &  \\ \mathbf{X}'_2\ar[r] & \mathbf{X}'_3 & }\]
of the above diagram in conjunction with the Yoneda lemma give us the following commutative diagram with vertical isomorphisms

	$$\xymatrix@!0{
	0\ar[rr] &&F_{\la}(D)\ar[rr]\ar[dd]\ar[dr]^<<<<<{F_{\la}(w)}&&F_{\la}(C)\ar[dd]\ar[rr]\ar[dr]^<<<<<{F_{\la}(v)}&&F_{\la}(A)\ar[dd]\ar[rr]\ar[dr]^<<<<<{F_{\la}(f)}&&0\\
	& 0\ar[rr]&&F_{\la}(E)\ar[dd]\ar[rr]&&F_{\la}(L)\ar[dd]\ar[rr]&&F_{\la}(B)\ar[rr]\ar[dd]&&0\\
	0\ar[rr]&&D'\ar[rr]\ar[dr]&&C'\ar[rr]\ar[dr]&&F_{\la}(A)\ar[rr]\ar[dr]^<<<{F_{\la}(f)}&&0\\
	&0\ar[rr]&&E'\ar[rr]&&L'\ar[rr]&&F_{\la}(B)\ar[rr]&&0
}$$
Because of the minimality of the corresponding projective presentations and being isomorphism of $\mathbf{a}$ we get the all vertical maps in the above diagram are isomorphisms. Hence the above diagram implies that $\rm{H}F_{\la}(\delta)\simeq \delta'$. Hence $\rm{H}F_{\la}(\delta)$ is an almost split sequences, as desired. 
\subsection{The case of $\CS \rm{F}_{\la}$}
For the case of monomorphism category we follow the same strategy as the morphism category given in the preceding subsection. We divide our argument into two steps that the ending term of the given almost split in $\CS(\mmod \CA)$ is either  a Mono-proper or non-Mono-proper object in $\CS(\mmod \CA)$. Our proof for the non-Mono-proper case is based on Lemma \ref{AlmostSplittrivialmonomorphisms}, and for the Mono-proper case we need Construction \ref{ConstructionMonoSplit}. Since  the results in this subsection are proved the same  as their  counterpart  of the previous subsection,   we mainly skip the proofs in this subsection. We just shall provide some more explanations to help for adopting the proofs for the monomorphism case. 

\begin{proposition}\label{Proposition 5.14}
	Assume that $\eta:0 \rt \mathbb{A}\st{p}\rt \mathbb{B}\st{q}\rt \mathbb{C}\rt 0$ be an almost split sequence in $\CS(\mmod \CA)$ ending at a non-Mono-proper object in $\CS(\mmod \CA)$. Then, the short exact sequence $\CS \text{F}_{\la}(\eta): 0 \rt \CS \text{F}_{\la}(\mathbb{A})\st{\CS \text{F}_{\la}(p)}\lrt \CS \text{F}_{\la}(\mathbb{B})\st{\CS \text{F}_{\la}(q)}\rt \CS \text{F}_{\la}(\mathbb{C})\rt 0$ is an almost split sequence in $\CS (\mmod \CB).$
\end{proposition}
\begin{proof}
Analogous to Proposition \ref{Proposition 5.11}, we prove case by case  for when   that $\mathbb{C}$ is isomorphic to either proper objects $(0 \rt C)$, $(C\st{1}\rt C)$ and  $(\Omega_{\mmod \CA}(C)\hookrightarrow P)$,  for some indecomposable object $C$ in $\mmod \CA$. The case $\mathbb{C}\simeq (0\rt C)$: $\eta$ is isomorphic to the one given in Lemma \ref{AlmostSplittrivialmonomorphisms} (1) for when $\CC=\mmod \CA.$ We know from Proposition \ref{Proposition 5.11} that $\CS \rm{F}_{\la}(\eta)=\rm{H}F_{\la}(\eta)$ is an almost split in $\rm{H}(\mmod \CB)$. Hence it is an almost split in $\CS(\mmod \CA)$ too. So we are done this case. For two other cases, since the push-down functor $F_{\la}$ preserves the almost split sequences, projective and injective objects, and minimal morphisms, we observe that the structures of the almost split sequences ending at two other non-Mono-proper cases, given in Lemma \ref{AlmostSplittrivialmonomorphisms} for $\CC=\mmod \CA, \mmod \CB$, are preserved by the functor $\CS \rm{F}_{\la}$. We are done.
\end{proof}

For the  non-Mono-proper case, we need to say  an analog of Lemma \ref{Lemma 5.11}, i.e., the restricted functor $\Phi\mid:\CF(\underline{\rm{mod}}\mbox{-}\CA)\rt  \CF(\underline{\rm{mod}}\mbox{-}\CB)$, see the lines preceding Construction \ref{FirstCoonstr}, commutes with the Nakayama functor. Note that by Remark \ref{Rem5.11} as $\underline{\rm{mod}}\mbox{-}\CB$ is   a $k$-dualizing variety, so we can talk about the Nakayama functor for it.
\begin{lemma}\label{lemma 5.20}
	For any $M \in \mmod \CA,$ there is a natural isomorphism $\Phi\mid (D_{\underline{\rm{mod}}\mbox{-}\CA}(\underline{\widehat{\CA}}(M, -))\simeq D_{\underline{\rm{mod}}\mbox{-}\CB}(\underline{\widehat{\CB}}(F_{\la}(M), -))$. In particular, $\Phi\mid (\CN_{\underline{\rm{mod}}\mbox{-}\CA}(E))\simeq \CN_{\underline{\rm{mod}}\mbox{-}\CB}(\Phi\mid (E))$, for any $E \in \CF(\underline{\rm{mod}}\mbox{-}\CA)$.
\end{lemma}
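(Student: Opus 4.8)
The plan is to run the proof of Lemma~\ref{Lemma 5.11} essentially verbatim, with every ingredient replaced by its stable counterpart: the representable $\widehat{\CA}(M,-)$ by $\underline{\widehat{\CA}}(M,-)=\underline{\rm{mod}}\mbox{-}\CA(M,-)$, the dualities $D_{\mmod \CA},D_{\mmod \CB}$ by $D_{\underline{\rm{mod}}\mbox{-}\CA},D_{\underline{\rm{mod}}\mbox{-}\CB}$, and the push-down $F_{\la}$ by the induced stable functor $\underline{F_{\la}}$. Three facts make this substitution legitimate. First, by the commutative diagram in Construction~\ref{ConstructionMonoSplit} the categories $\underline{\rm{mod}}\mbox{-}\CA$ and $\underline{\rm{mod}}\mbox{-}\CB$ are dualizing $k$-varieties, so the Nakayama functors $\CN_{\underline{\rm{mod}}\mbox{-}\CA},\CN_{\underline{\rm{mod}}\mbox{-}\CB}$ exist, satisfy $\CN_{\underline{\rm{mod}}\mbox{-}\CA}(\underline{\widehat{\CA}}(-,X))\simeq D_{\underline{\rm{mod}}\mbox{-}\CA}(\underline{\widehat{\CA}}(X,-))$, and the duality there is still computed by $\Hom_k(-,k)$, hence commutes with finite direct sums. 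Second, $\Phi\mid$ is by construction the restriction of $\Phi$, so for $T\in\CF(\underline{\rm{mod}}\mbox{-}\CA)$ one still has $\Phi\mid(T)=\widehat{T}\circ F_{\bullet}$, whence $\Phi\mid(T)(F_{\la}(N))\simeq\widehat{T}\big(\bigoplus_{g\in G}{}^gN\big)$ using $F_{\bullet}\circ F_{\la}(N)\simeq\bigoplus_{g\in G}{}^gN$. Third, by Proposition~\ref{Prop 2.6} the stable push-down $\underline{F_{\la}}$ is a $G$-precovering, which supplies the crucial identification $\bigoplus_{g\in G}\underline{\rm{mod}}\mbox{-}\CA(M,{}^gN)\st{\sim}\rt\underline{\rm{mod}}\mbox{-}\CB(F_{\la}(M),F_{\la}(N))$.

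Granting these, for the first assertion I would reproduce the chain of natural isomorphisms of Lemma~\ref{Lemma 5.11}: for $X\in\mmod \CB$ choose $N\in\mmod \CA$ with $F_{\la}(N)\simeq X$ (possible since $F_{\la}$ is dense), and compute
\begin{align*}
\Phi\mid\!\big(D_{\underline{\rm{mod}}\mbox{-}\CA}(\underline{\widehat{\CA}}(M,-))\big)(X)
&\ \simeq\ \bigoplus_{g\in G}D\big(\underline{\rm{mod}}\mbox{-}\CA(M,{}^gN)\big)
\ \simeq\ D\Big(\bigoplus_{g\in G}\underline{\rm{mod}}\mbox{-}\CA(M,{}^gN)\Big) \\
&\ \simeq\ D\big(\underline{\rm{mod}}\mbox{-}\CB(F_{\la}(M),F_{\la}(N))\big)
\ \simeq\ D_{\underline{\rm{mod}}\mbox{-}\CB}(\underline{\widehat{\CB}}(F_{\la}(M),-))(X).
\end{align*}
The passage to a finite direct sum in the middle is licensed exactly as in Lemma~\ref{Lemma 5.11}: since $\CA$ is locally bounded, $M$ and $N$ are finite dimensional, and $G$ acts freely on objects, $\mmod \CA(M,{}^gN)$, hence its quotient $\underline{\rm{mod}}\mbox{-}\CA(M,{}^gN)$, is nonzero for only finitely many $g$; naturality in $X$ is inherited termwise.

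For the ``in particular'' clause, I would first establish $\Phi\mid(\underline{\widehat{\CA}}(-,M))\simeq\underline{\widehat{\CB}}(-,F_{\la}(M))$. If $p:P\rt M$ is a projective cover in $\mmod \CA$, then a morphism into $M$ factors through $p$ precisely when it factors through some projective, so $\underline{\widehat{\CA}}(-,M)\simeq\Theta_{\mmod \CA}(P\st{p}\rt M)$; Proposition~\ref{Prop 4.2} (in the form already used for Proposition~\ref{Prop 4.4}) then gives $\Phi\mid(\underline{\widehat{\CA}}(-,M))\simeq\Theta_{\mmod \CB}(F_{\la}(P)\st{F_{\la}(p)}\rt F_{\la}(M))$, and this is $\underline{\widehat{\CB}}(-,F_{\la}(M))$ because $F_{\la}$ preserves projectives and minimal morphisms, so $F_{\la}(p)$ is again a projective cover. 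Combining this with the first assertion and the identity $\CN_{\underline{\rm{mod}}\mbox{-}\CA}(\underline{\widehat{\CA}}(-,M))\simeq D_{\underline{\rm{mod}}\mbox{-}\CA}(\underline{\widehat{\CA}}(M,-))$ yields $\Phi\mid\circ\,\CN_{\underline{\rm{mod}}\mbox{-}\CA}\simeq\CN_{\underline{\rm{mod}}\mbox{-}\CB}\circ\,\Phi\mid$ on representable functors; since $\CN_{\underline{\rm{mod}}\mbox{-}\CA}$, $\CN_{\underline{\rm{mod}}\mbox{-}\CB}$ and $\Phi\mid$ are right exact and every $E\in\CF(\underline{\rm{mod}}\mbox{-}\CA)$ has a projective presentation by representables, the isomorphism propagates to all $E$ by the five lemma. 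I expect the only genuine difficulty to be organisational, namely checking that $D$, the extension $\widehat{(-)}$, the Nakayama functor and $\Phi$ all restrict compatibly from $\CF(\mmod \CA)$ to $\CF(\underline{\rm{mod}}\mbox{-}\CA)$; this is precisely what the dualizing-variety structure of $\underline{\rm{mod}}\mbox{-}\CA$ from Construction~\ref{ConstructionMonoSplit} and the $G$-precovering property of $\underline{F_{\la}}$ from Proposition~\ref{Prop 2.6} are invoked for, and once they are in place the computation is the same bookkeeping as in Lemma~\ref{Lemma 5.11}.
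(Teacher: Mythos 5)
Your proposal is correct and follows essentially the same route as the paper: the published proof simply states that the argument of Lemma~\ref{Lemma 5.11} carries over verbatim once one invokes Proposition~\ref{Prop 2.6} to know that $\underline{F_{\la}}:\underline{\rm{mod}}\mbox{-}\CA\rt \underline{\rm{mod}}\mbox{-}\CB$ is a $G$-precovering, which is exactly the substitution you perform in your chain of isomorphisms. Your extra details (the identification $\Phi\mid(\underline{\widehat{\CA}}(-,M))\simeq\underline{\widehat{\CB}}(-,F_{\la}(M))$ and the propagation of the Nakayama isomorphism from representables to all of $\CF(\underline{\rm{mod}}\mbox{-}\CA)$) only make explicit what the paper leaves implicit.
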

\begin{proof}
	The same argument as Lemma \ref{Lemma 5.11}
 works here. In this case we also  need to use this fact that the functor $\underline{F_{\la}}:\underline{\rm{mod}}\mbox{-}\CA\rt \underline{\rm{mod}}\mbox{-}\CB$ is a $G$-precovering, see Proposition \ref{Prop 2.6}.
\end{proof}

  Next in view of the above lemma and Construction \ref{ConstructionMonoSplit}  we can follow  the same proof of Proposition \ref{Proposition 5.13} to prove the following:
  \begin{proposition}\label{Prop 5.16}
  	Assume that $\eta:0 \rt \mathbb{A}\st{f}\rt \mathbb{B}\st{g}\rt \mathbb{C}\rt 0$ is an almost split sequence in $\CS(\mmod \CA)$ ending at a Mono-proper object in $\CS(\mmod \CA)$. Then, the short exact sequence $\CS \text{F}_{\la}(\eta): 0 \rt \CS \text{F}_{\la}(\mathbb{A})\st{\CS \text{F}_{\la}(f)}\lrt \CS \text{F}_{\la}(\mathbb{B})\st{\CS \text{F}_{\la}(g)}\rt \CS \text{F}_{\la}(\mathbb{C})\rt 0$ is an almost split sequence in $\CS (\mmod \CB).$
  \end{proposition}
  \begin{proof}
  A brief sketch of the reasoning is given below. Thanks to Proposition \ref{Proposition 5.10} we deduce that $\Psi_{\mmod \CA}(\eta)$ is an almost split sequence in $\CF(\underline{\rm{mod}}\mbox{-}\CA).$ Following the same lines in the Proof of Proposition \ref{Proposition 5.16} together with applying  Lemma \ref{lemma 5.20} we infer that $\Phi \mid(\Psi_{\mmod \CA}(\eta) ) $ is an almost split sequence  in $\CF(\underline{\rm{mod}}\mbox{-}\CB)$.  We get the diagram $(\dagger')$ with  the lower row $\mathbf{Z}':=\Psi_{\mmod \CB}(\CS \text{F}_{\la}(\eta))=\Phi \mid(\Psi_{\mmod \CA}(\eta) )$ the same as the diagram $(\dagger)$ in the proof of Proposition \ref{Proposition 5.13}. Accordingly, using Construction \ref{ConstructionMonoSplit}, we have the diagram $(\dagger'\dagger')$ with   the lower row $\mathbf{Z}'$ the same as the diagram $(\dagger\dagger)$ in the proof of Proposition \ref{Proposition 5.13}. Viewing the rows $\mathbf{Z}$ and $\mathbf{Z}'$ as complexes over $\CF(\mmod \CB)$, and considering the projective resolutions of $\mathbf{Z}$ and $\mathbf{Z}'$ (in the category of complexes over $\CF(\mmod \CB)$) inducing by the diagrams $(\dagger')$ and $(\dagger'\dagger')$ respectively, we can deduce that $\CS \text{F}_{\la}(\eta)$ is an almost split in $\CS (\mmod \CB)$.
  \end{proof}
  
  Combining the all above results (Propositions \ref{Proposition 5.11}, \ref{Proposition 5.13}, \ref{Proposition 5.14} and \ref{Prop 5.16}) leads to  our main result in this section.
\begin{theorem}\label{Theorem 5.22}
	Assume that $\CA$ is a locally bounded $k$-catgery  and $G$
	a torsion-free  group of $k$-linear automorphisms of $\CA$ acting freely on the objects of $\CA,$ and  $F:\CA\rt \CB$ is a Galois functor as in Definition \ref{Def 2.1}. Further, the action of $G$ on $\CA$ have only finitely many $G$-orbits, and $\mmod \CB$ is of finite representation type. 
	\begin{itemize}
		\item [$(i)$] The functor $\rm{H}F_{\la}$ preserves the almost split sequences in $\rm{H}(\mmod \CA)$. 
		\item [$(ii)$] The functor $\CS \rm{F}_{\la}$ preserves the almost split sequences in $\CS(\mmod \CA)$. 
	\end{itemize}
\end{theorem}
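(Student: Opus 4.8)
The plan is to assemble the four case-propositions proved in this section into the single statement. For $(i)$, let $\eta\colon 0\rt\mathbb{A}\rt\mathbb{B}\rt\mathbb{C}\rt 0$ be an almost split sequence in $\rm{H}(\mmod \CA)$. Since its right-hand map is right almost split, the ending term $\mathbb{C}$ is indecomposable, so by the very definition of Mor-proper objects it is either Mor-proper, or else isomorphic to one of $(C\rt 0)$, $(C\st{1}\rt C)$, $(0\rt C)$ for some indecomposable $C\in\mmod \CA$. In the latter case I would invoke Proposition~\ref{Proposition 5.11}, and in the former Proposition~\ref{Proposition 5.13}; in both situations $\rm{H}F_{\la}(\eta)$ — which is a genuine short exact sequence in $\rm{H}(\mmod \CB)$ because the push-down $F_{\la}$, hence $\rm{H}F_{\la}$, is exact — is an almost split sequence. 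Since these possibilities for $\mathbb{C}$ are exhaustive, this proves $(i)$.

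For $(ii)$ I would run the identical dichotomy inside the monomorphism category. If $\eta\colon 0\rt\mathbb{A}\rt\mathbb{B}\rt\mathbb{C}\rt 0$ is an almost split sequence in $\CS(\mmod \CA)$, its indecomposable ending term $\mathbb{C}$ is either Mono-proper, or one of the three distinguished objects $(0\rt C)$, $(C\st{1}\rt C)$, $(\Omega_{\La}(C)\hookrightarrow P)$ singled out in the definition of Mono-proper objects. Proposition~\ref{Proposition 5.14} handles the latter three forms and Proposition~\ref{Prop 5.16} the Mono-proper case; since $F_{\la}$ is exact and preserves monomorphisms, $\CS \rm{F}_{\la}(\eta)$ is again a short exact sequence in $\CS(\mmod \CB)$, and by the cited propositions it is almost split. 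Hence $(ii)$ follows.

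There is essentially no obstacle here: all the substantive work has already been carried out in Propositions~\ref{Proposition 5.11}, \ref{Proposition 5.13}, \ref{Proposition 5.14} and \ref{Prop 5.16}, and the remaining points are bookkeeping — namely that the three (resp. four) cases listed genuinely exhaust the indecomposable objects of $\rm{H}(\mmod \CA)$ (resp. $\CS(\mmod \CA)$), which is immediate from the definitions of Mor-proper and Mono-proper, together with the triviality that $\rm{H}F_{\la}$ and $\CS \rm{F}_{\la}$ carry short exact sequences to short exact sequences. If desired, one could instead phrase the conclusion uniformly via the commutative squares of Propositions~\ref{Prop 4.2} and~\ref{Prop 4.4} combined with the fact, established along the way in the proof of Proposition~\ref{Proposition 5.13}, that $\Phi$ and $\Phi\mid$ preserve almost split sequences; but the case-by-case assembly above is the most direct route.
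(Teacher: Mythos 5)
Your proposal is correct and is essentially the paper's own proof: the theorem is obtained exactly by combining Propositions \ref{Proposition 5.11}, \ref{Proposition 5.13}, \ref{Proposition 5.14} and \ref{Prop 5.16}, the only bookkeeping being that the indecomposable ending term is either one of the distinguished forms or (Mor-/Mono-)proper, which is the dichotomy you spell out. Your assignment of cases to propositions follows their proofs (the special forms in \ref{Proposition 5.11} and \ref{Proposition 5.14}, the proper case via the constructions in \ref{Proposition 5.13} and \ref{Prop 5.16}), so nothing is missing.
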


\section {Gabriel's theorem for (Mono)morphism category}\label{Section 6}
 The notion of the Auslander-Reiten quivers  is  mainly based on the one of  almost split sequences.  As a direct consequence of our result in the preceding section, we show that the functor $\rm{H}\rm{F}_{\la}$, resp. $\CS \rm{F}_{\la}$, induces a Galois covering between some components of the Auslander-Reiten quivers of $\rm{H}(\mmod \CA)$ and $\rm{H}(\mmod \CB)$, resp. $\CS(\mmod \CA)$ and $\CS(\mmod \CB)$. One of  the important result in the covering theory, proved by Gabriel\cite{G}, is to make a connection between the representation type of $\CA$ and $\CB$, namely in our setup, asserting   $\CA$ is locally representation finite if and only if $\CB$ is representation-finite.  Our  approach in this  paper  enables us to establish a  similar connection between the involved morphism, resp. monomorphism, categories.

Following \cite[Section 4]{BL}, we give a brief recall of the notion of Auslander-Reiten quivers.  Let $\CQ$ be a finite quiver $\CQ=(\CQ_0, \CQ_1, s, t)$, where $\CQ_0$ and $\CQ_1$ are the set of arrows and vertices of $\CQ$, respectively, and $s$ and $t$ are the starting and ending maps from $\CQ_1$ to $\CQ_0,$ respectively. For $x \in \CQ_0$, denote by $x^+$ the set of arrows $\alpha$ with $s(\alpha)=x$, and by $x^-$ the set of arrows $\beta$ with $t(\beta)=x$. A {\it valued quiver} is a pair $(\CQ, v)$, where $\CQ$ is a quiver without multiple arrows and $v$ is a valuation on arrows, that is, each arrow $x\rt y$ is endowed with a pair $(v_{xy}, v'_{xy})$ of positive integers. A {\it valued quiver morphism} $\phi:(\CQ, v)\rt (\CQ', v')$ is a quiver morphism, consisting of two maps $\phi_0:\CQ_0\rt \CQ'_0$ and $\phi_1:\CQ_1\rt \CQ'_1$ such that $\phi_1(\CQ_1(x, y))\subseteq \CQ'_1(\phi_0(x), \phi_0(y))$ for all $x, y \in \CQ_0$, such that $v_{xy}\leqslant v'_{\phi_0(x)\phi_0(y)}$, for any $x\rt y\in \CQ_1.$ The valued quiver $(\CQ, v)$ is called {\it symmetrically valued} if $v_{xy}=v'_{xy}$, for any $x, y \in \CQ_0$. Finally, a {\it valued translation quiver} is a triple $(\CQ, v, \tau)$ is a valued quiver and $\tau$  a translation, that is a bijection from one subset of $\CQ_0$ to another one such that, for any $x \in \CQ_0$ with $\tau x$ defined, we have  $x^+=(\tau x)^-\neq\emptyset$ and $(v_{\tau x, y}, v'_{\tau x, y})=(v'_{y, x}, v_{y, x})$ for every $y \in x^+$. In this case, $x \in \CQ_0$ is called {\it projective} or {\it injective} if $\tau x$ or $\tau ^{-1} x $ is not defined, respectively. A morphism of valued translation quivers $\phi:(\CQ, v, \tau)\rt (\CQ', u, \tau')$ is a valued quiver morphism $\phi:(\CQ, v)\rt (\CQ', u)$ satisfying the condition: for any non-projective $x \in \CQ_0$, the image $\phi_0(x)$ is not projective with $\tau' \phi_0(x)=\phi_0(\tau x).$ Suppose that $H$ is a group, as the $k$-categories setting, we say that the group $H$ acts on $\CQ$, if  there exists a homomorphism from $H$ into the group of automorphisms of $\CQ$.
\begin{definition}
	Let $(\CQ, v, \tau)$ be a valued translation quiver with a  free action of a group $H$, in the usual sense. A  morphism of valued translation quivers $\phi:(\CQ, v, \tau) \rt (\CQ', u, \tau')$,  is called a Galois $H$-covering provided that the following conditions are satisfied.
	\begin{itemize}
		\item [$(1)$] The map $\phi_0$ is surjective.
		\item [$(2)$] If $g \in G$, then $\phi_0\circ g=\phi_0.$
		\item [$(3)$] If $x, y \in \CQ_0$ with $\phi_0(x)=\phi_0(y)$, then $y=gx$ for some $g \in G.$
		\item [$(4)$] If $x \in \CQ_0$, then $\phi_1$ induces two bijections $x^+\rt \phi_0(x)^+$ and $x^-\rt \phi_0(x)^-.$
		\item [$(5)$] If $x \in \CQ_0$ with $a \in \phi_0(x)^+$ and $b \in \phi_0(x)^-$, then 
		$$u_{\phi_0(x), a}=\Sigma_{y \in x^+\cap\phi^-(a)}v_{x,y} \  \text{and} \ u_{b, \phi_0(x)}=\Sigma_{z \in x^-\cap\phi^-(b)}v_{z, x}$$ 
		\item [$(6)$] For any projective vertex $x \in \CQ_0$, the image $\phi_0(x)$ is projective, or equivalently its dual holds.
	\end{itemize}
	\end{definition}
Let $\mathcal{D}$ be an extension-closed subcategory of a  Krull-Schmidt  abelian $k$-category $\CC$. For $A, B \in \mathcal{D}$, let $\rm{rad}_{\mathcal{D}}(A, B)$ denote the {\it radical } of $\mathcal{D}(A, B)$ defined as 
$$\{f \in \mathcal{D}(A, B)\mid hfg \ \text{is not isomorphism for any} \ g:X\rt A\ \text{and} \ h:B\rt X\ \text{with} \ X \ \text{in } \ \rm{ind}\mbox{-}\mathcal{D} \}.$$ Further, $\text{rad}^2_{\mathcal{D}}(A, B)=\{f \in \mathcal{D}(A, B)\mid f=g\circ h \ \text{for some } h \in \rm{rad}(A, X), g \in \rm{rad}(X, B) \ \text{and} \ X \in \rm{ob}(\mathcal{D}) \}$. Define $\rm{irr}_{\mathcal{D}}(A, B):=\rm{rad}_{\mathcal{D}}(A, B)/\rm{rad}^2_{\mathcal{D}}(A, B)$. Then the {\it Auslander-Reiten quiver } $\Gamma_{\mathcal{D}}$ is a valued symmetrically translation quiver defined as follows, see e.g. \cite{L}: the vertex set is $\rm{Ind}\mbox{-}\mathcal{D}$, a complete set of non-isomorphic indecomposable objects in $\mathcal{D}$, for any vertices $X, Y$, there exists a single arrow $X\rt Y$ with valuation $(d_{XY}, d'_{XY})$ where $d_{XY}=d'_{XY}=\rm{dim}_{k}\rm{irr}_{\mathcal{D}}(X, Y)>0,$ and the translation $\tau_{\mathcal{D}}$ is the Auslander-Reiten translation, that is, $X=\tau_{\mathcal{D}}Z$ if and  only if $\mathcal{D}$ has an almost split sequence $0\rt X\rt Y \rt Z\rt 0.$ For short, we sometimes use $\Gamma_{\La}$ instead of $\Gamma_{\mmod \La}$. \\
Suppose now that $H$ is a group acting freely on $\mathcal{D}$. Let $\Sigma$ be a complete set of representatives of the $H$-orbits in $\rm{ob}(\mathcal{D})$. Then, we can choose $\rm{Ind}\mbox{-}\mathcal{D}$ to be the set of objects ${}^gX$ with $X \in \Sigma$ and $g \in H$. In this way, $\rm{Ind}\mbox{-}\mathcal{D}$ becomes $H$-stable, that is, if $M \in \rm{Ind}\mbox{-}\mathcal{D}$, then ${}^gM \in \rm{Ind}\mbox{-}\mathcal{D}$, for every $g \in H.$ It is easy to see that the free $H$-action on $\mathcal{D}$ induces a free $H$-action on the valued translation quiver $\Gamma_{\mathcal{D}}.$\\

Now, by means of our main result (Theorem \ref{Theorem 5.22}) in the  preceding section and following the same  proof of \cite[Theorem 4.7]{BL} imply the following theorem.
\begin{theorem}\label{Theorem 6.2}
	Assume that $\CA$ is a locally bounded $k$-catgery  and $G$
	a torsion free  group of $k$-linear automorphisms of $\CA$ acting freely on the objects of $\CA,$ and  $F:\CA\rt \CB$ is a Galois functor as in Definition \ref{Def 2.1}. Further, the action of $G$ on $\CA$ have only finitely many $G$-orbits. Further, assume $\mmod \CB$ is of finite representation type and  connected.
	\begin{itemize}
		\item [$(i)$] The functor $\rm{H}F_{\la}$ induces a quiver morphism $\overline{\rm{H} F}_{\la}$ from  $\Gamma_{\rm{H}(\mmod \CA)}$ on to the  union of some connected components of  $\Gamma_{\rm{H}(\mmod \CB)}.$ The image $\overline{\rm{H} F}_{\la}(\Gamma_{\rm{H}(\mmod \CA)})$ is a symmetrically valued  translation subquiver of $\Gamma_{\rm{H}(\mmod \CB)}$, and moreover the induced quiver morphism $\overline{\rm{H} F}_{\la}: \Gamma_{\rm{H}(\mmod \CA)}\rt \overline{\rm{H} F}_{\la}(\Gamma_{\rm{H}(\mmod \CA}))$ is a Galois $G$-covering of symmetrically valued  translation quivers.
		\item [$(ii)$] A monomorphism version of $(i)$ holds. More precisely, the functor $\CS \rm{F}_{\la}$ induces a quiver morphism $\overline{\CS \rm{F}}_{\la}$ from  $\Gamma_{\CS(\mmod \CA)}$ on to the  union of some connected components of  $\Gamma_{\CS(\mmod \CB)}.$ The image $\overline{\CS \rm{F}}_{\la}(\Gamma_{\CS(\mmod \CA)})$ is a symmetrically valued translation subquiver of $\Gamma_{\CS(\mmod \CB)}$, and moreover the induced quiver morphism $\overline{\CS \rm{F}}_{\la}: \Gamma_{\CS(\mmod \CA)}\rt \overline{\CS \rm{F}}_{\la}(\Gamma_{\CS(\mmod \CA)})$ is a Galois $G$-covering of symmetrically valued translation quivers.
	\end{itemize}
\end{theorem}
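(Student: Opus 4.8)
The plan is to deduce Theorem~\ref{Theorem 6.2} from the two structural facts established above --- that $\rm{H}F_{\la}$ and $\CS F_{\la}$ are $G$-precoverings (Theorem~\ref{Theorem 3.2}) and that they preserve almost split sequences (the main theorem of Section~\ref{Section 5}) --- by running the argument of \cite[Theorem 4.7]{BL} in the present setting. I will carry out $(i)$ in detail; assertion $(ii)$ is then identical, invoking the monomorphism counterparts of all the quoted results (the last sentence of Proposition~\ref{Prop 3.2} and part $(ii)$ of the main theorem of Section~\ref{Section 5}). First I would record that the ambient hypotheses for that argument are met: $\rm{H}(\mmod\CA)$ and $\rm{H}(\mmod\CB)$ are Hom-finite Krull--Schmidt $k$-categories admitting almost split sequences --- since $\CA$ is locally bounded, $\mmod\CA$ is a dualizing $k$-variety, and $\CB$ is finite, so $\mmod\CB$ is the module category of a finite dimensional algebra, whence their morphism categories inherit almost split sequences, as was already used in Section~\ref{Section 5} --- and by Lemma~\ref{Lemma 3.1} the group $G$ acts freely on $\rm{H}(\mmod\CA)$, hence freely on the valued translation quiver $\Gamma_{\rm{H}(\mmod\CA)}$.

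Next I would define the quiver morphism $\overline{\rm{H}F}_{\la}$ on vertices by $\mathbb{X}\mapsto\rm{H}F_{\la}(\mathbb{X})$. By Proposition~\ref{Prop 3.2} this is well defined on isomorphism classes, is $G$-invariant, sends indecomposables to indecomposables, and has the $G$-orbit of $\mathbb{X}$ as its fibre over $\rm{H}F_{\la}(\mathbb{X})$; this yields conditions $(1)$--$(3)$ of a Galois $G$-covering of translation quivers (surjectivity onto the image being automatic). For the arrow and valuation conditions $(4)$ and $(5)$ I would use the $G$-precovering isomorphism
$$(\rm{H}F_{\la})_{\mathbb{X},\mathbb{Y}}:\bigoplus_{g\in G}\rm{H}(\mmod\CA)(\mathbb{X},{}^g\mathbb{Y})\st{\sim}\lrt\rm{H}(\mmod\CB)(\rm{H}F_{\la}\mathbb{X},\rm{H}F_{\la}\mathbb{Y})$$
of Theorem~\ref{Theorem 3.2}: since $\rm{H}F_{\la}$ takes the radical into the radical and its square into its square, this isomorphism descends to an isomorphism $\bigoplus_{g}\rm{irr}(\mathbb{X},{}^g\mathbb{Y})\st{\sim}\lrt\rm{irr}(\rm{H}F_{\la}\mathbb{X},\rm{H}F_{\la}\mathbb{Y})$, where Lemma~\ref{Lemma 3.1} ensures that for indecomposable $\mathbb{Y}$ the objects ${}^g\mathbb{Y}$ are pairwise non-isomorphic so that no orbit collapsing occurs; taking $k$-dimensions gives both the required bijections $x^{\pm}\rt\overline{\rm{H}F}_{\la}(x)^{\pm}$ and the valuation identities. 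Preservation of almost split sequences then supplies the translation compatibility $\tau_{\rm{H}(\mmod\CB)}\rm{H}F_{\la}(\mathbb{X})\simeq\rm{H}F_{\la}(\tau_{\rm{H}(\mmod\CA)}\mathbb{X})$ for non-projective indecomposable $\mathbb{X}$, and --- reading off the explicit forms of the almost split sequences at the non-Mor-proper objects from Lemmas~\ref{AlmostSplittrivialmonomorphisms} and \ref{Lemma 5.2} --- that $\rm{H}F_{\la}$ carries relative-projective, resp. relative-injective, objects to objects of the same kind, which is condition $(6)$. Symmetry of the valuation on the image is inherited from $\Gamma_{\rm{H}(\mmod\CA)}$ through the surjection of valued quivers.

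Finally, to see that the image $\overline{\rm{H}F}_{\la}(\Gamma_{\rm{H}(\mmod\CA)})$ is a union of connected components of $\Gamma_{\rm{H}(\mmod\CB)}$, I would note that the bijections $x^{\pm}\rt\overline{\rm{H}F}_{\la}(x)^{\pm}$ force the image to be closed under passing to immediate predecessors and successors, while the translation compatibility forces it to be closed under $\tau^{\pm1}$; a set of vertices with both closure properties is a union of components. I expect this closure statement to be the only delicate point: it is where the full strength of ``preserves almost split sequences'' --- in both the left- and right-almost-split directions, together with the bookkeeping at the relative-projective and relative-injective (equivalently, non-Mor-proper, resp. non-Mono-proper) vertices --- is needed, and it is exactly here that the proof of \cite[Theorem 4.7]{BL} transcribes once the precovering and almost-split-preservation properties are in hand. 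Assertion $(ii)$ then follows word for word, with $\CS F_{\la}$ in place of $\rm{H}F_{\la}$ and the monomorphism versions of the cited results.
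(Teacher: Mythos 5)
Your proposal is correct and follows essentially the same route as the paper: the paper's proof consists precisely of combining the $G$-precovering property of Theorem \ref{Theorem 3.2} with the preservation of almost split sequences established in Section \ref{Section 5}, and then transcribing the argument of \cite[Theorem 4.7]{BL}. Your write-up simply spells out in more detail the steps of that transcription (vertex map via Proposition \ref{Prop 3.2}, irreducible-morphism spaces via the precovering isomorphism, translation compatibility, and closure of the image), which the paper leaves implicit.
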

\begin{remark}\label{remark 6.3}
	Assume $\CC$ is a Krull-Shmidt abelian  $k$-category. The functor $\Psi_{\CC}$, introduced in Construction \ref{FirstCoonstr}, since it is full, faithful and objective,  induces an equivalence $\CS(\CC)/\CV\simeq \CF(\underline{\CC})$, where $\CV$ is generated by the objects of the form $(0\rt X)$ and $(X\st{1}\rt X)$, where $X$ runs through $\CC,$ see \cite[Theorem 3.2]{H} for more details. In the case that $\CC$ is locally bounded, by the equivalence it is easy to see that: The monomorphism category $\CS(\CC)$ is locally bounded if and only if so is $\CF(\underline{\CC})$. In particular, if we assume $\CC$ is of finite representation type, then we get:  $\CS(\CC)$ is of finite representation type if and only if so is $\CF(\underline{\CC})$. Similarly, by use of Construction \ref{Construction 4.1} the similar relationship between $\rm{H}(\CC)$ and $\CF(\CC)$ holds.
\end{remark}

A category $C$ is called {\it connected}  if and only if there is no non-trivial partition $\rm{ob}(\CC)=V\cup U$  such that $\CC(X, Y)=0=\CC(Y, X)$ for any $X \in V$ and $Y \in Y$. We recall that an  algebra $\La$
is said to be {\it indecomposable} if and only if $\rm{prj}\mbox{-}\La$, the subcategory of finitely generated projective modules,  is a connected category \cite{AuslanreitenSmalo}. \\
We need the following  criterion for finite representation type of (Mono)morphism categories  in terms of Auslander-Reiten quivers.

\begin{lemma}\label{Lemma 6.4}
Assume $\La$ is an indecomposable  algebra of finite representation type. Then, the following hold.
\begin{itemize}
	\item [$(1)$] The morphism category $\rm{H}(\mmod \La)$ is of finite representation type if and only if the Auslander-Reiten quiver $\Gamma_{\rm{H}(\mmod \La)}$ has a finite connected component. 
		\item [$(2)$] The monomorphism category $\CS(\mmod \La)$ is of finite representation type if and only if the Auslander-Reiten quiver $\Gamma_{\CS(\mmod \La)}$ has a finite  component.	
\end{itemize} 
\end{lemma}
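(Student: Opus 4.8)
The plan is to reduce the statement for $\mathrm{H}(\mmod\La)$, resp. $\CS(\mmod\La)$, to the corresponding statement for the functor category $\CF(\mmod\La)$, resp. $\CF(\underline{\rm{mod}}\mbox{-}\La)$, where the analogous criterion is classical for module categories over a finite dimensional algebra. Since $\CF(\mmod\La)=\mmod A$, where $A$ is (Morita equivalent to) $\End_\La(M)$ with $M$ an additive generator of $\mmod\La$ (here $\La$ is representation-finite, so such $M$ exists), and since $\CF(\underline{\rm{mod}}\mbox{-}\La)=\mmod(A/\CP)$ is the module category over the stable Auslander algebra, both target categories are module categories of finite dimensional $k$-algebras. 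First I would invoke Remark \ref{remark 6.3}: via the equivalences $\mathrm{H}(\mmod\La)/\CU\simeq\CF(\mmod\La)$ and $\CS(\mmod\La)/\CV\simeq\CF(\underline{\rm{mod}}\mbox{-}\La)$ induced by $\Theta_{\mmod\La}$ and $\Psi_{\mmod\La}$ respectively, together with the observation that the subcategories $\CU$ and $\CV$ are generated by only finitely many indecomposables (since $\La$ is representation-finite), one sees that $\mathrm{H}(\mmod\La)$ is of finite representation type if and only if $\CF(\mmod\La)=\mmod A$ is, and similarly for the monomorphism case; moreover the Auslander-Reiten quivers $\Gamma_{\mathrm{H}(\mmod\La)}$ and $\Gamma_{\mmod A}$ differ only by finitely many extra vertices (the objects in $\CU$, namely those of the form $(X\rt0)$, $(X\st1\rt X)$, $(0\rt X)$, which sit in the AR quiver in a controlled way as described in Lemma \ref{Lemma 5.2}), so one has a finite connected component in one iff in the other.

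Having made this reduction, the statement becomes: for an indecomposable finite dimensional $k$-algebra $\Gamma$ (here $\Gamma=A$ or $\Gamma=A/\CP$), $\mmod\Gamma$ is of finite representation type if and only if $\Gamma_{\mmod\Gamma}$ has a finite connected component. The direction ``$\Rightarrow$'' is trivial since then $\Gamma_{\mmod\Gamma}$ is itself finite and connectedness of $\Gamma$ forces its AR quiver to be connected (standard: for an indecomposable algebra the AR quiver is connected, see \cite{AuslanreitenSmalo}). For ``$\Leftarrow$'', one uses the Auslander-Reiten theorem: a connected component $\Com$ of $\Gamma_{\mmod\Gamma}$ that is finite is necessarily a whole connected component of the quiver, and by a theorem of Auslander (finiteness of a component implies it is all of $\mmod\Gamma$ when $\Gamma$ is connected, or more precisely the functor-theoretic argument that a finite component is closed under predecessors and successors and hence, by indecomposability of $\Gamma$, equals the whole quiver) we conclude $\mmod\Gamma$ is representation-finite. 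The key tool here is that every indecomposable non-projective has an almost split sequence ending at it and every indecomposable non-injective one starting at it, so a finite component has no ``boundary'' through which it could fail to capture all indecomposables — combined with the fact that the projectives and injectives all lie in a single component when $\Gamma$ is connected.

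The main obstacle I expect is the bookkeeping in the reduction step: one must check carefully that the finitely many ``extra'' indecomposables living in $\CU$ (resp. $\CV$) are genuinely finite in number and that they are correctly glued to $\Gamma_{\mmod A}$ (resp. $\Gamma_{\mmod A/\CP}$) inside $\Gamma_{\mathrm{H}(\mmod\La)}$ (resp. $\Gamma_{\CS(\mmod\La)}$) — this uses the explicit descriptions of almost split sequences at the proper/improper objects from Lemmas \ref{AlmostSplittrivialmonomorphisms} and \ref{Lemma 5.2}, which show precisely how $(X\rt 0)$, $(X\st1\rt X)$, $(0\rt X)$ attach. Once it is established that adding finitely many vertices to an AR quiver cannot change whether some connected component is finite, and that the equivalence $\mathrm{H}(\mmod\La)/\CU\simeq\mmod A$ identifies indecomposables not in $\CU$ with indecomposable $A$-modules, the rest is the classical criterion for module categories. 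So the proof is: reduce via Remark \ref{remark 6.3}; observe $A$ and $A/\CP$ are indecomposable algebras (because $\La$ is indecomposable, hence its Auslander algebra and stable Auslander algebra are connected); apply the classical fact that for a connected finite dimensional algebra, representation-finiteness is equivalent to the existence of a finite connected component of its AR quiver.
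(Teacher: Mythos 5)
Your route is genuinely different from the paper's, and as written it has two gaps. The paper never passes to the (stable) Auslander algebra: it notes that $\rm{H}(\mmod \La)$ and $\CS(\mmod \La)$ are themselves \emph{connected} Krull--Schmidt categories (connectedness is checked from the classification of their projective objects, $(0\rt P)$, $(P\st{1}\rt P)$, using that $\La$ is indecomposable, cf.\ \cite{AuslanreitenSmalo} and \cite[Proposition 1.4]{RS2}), and then applies the Krull--Schmidt-category version of Auslander's theorem, \cite[Lemma 5.1]{L}, directly to these categories. Your first gap is the transfer of the AR-quiver hypothesis across the quotient equivalences of Remark \ref{remark 6.3}. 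The principle you invoke, that ``adding finitely many vertices to an AR quiver cannot change whether some connected component is finite,'' is false in general: one added vertex adjacent to both a finite and an infinite component merges them, and the enlarged quiver may have no finite component at all. What your reduction really needs is that $\Gamma_{\mmod A}$ is (up to the expected modifications) the full translation subquiver of $\Gamma_{\rm{H}(\mmod \La)}$ on the vertices outside $\CU$, i.e.\ that irreducible morphisms between non-$\CU$ indecomposables are preserved and reflected by $\Theta_{\mmod \La}$, and likewise for $\Psi_{\mmod \La}$; Lemma \ref{Lemma 5.2} and Construction \ref{Construction 5.4} only give the correspondence of almost split sequences at Mor-proper ends, and the behaviour at the projectives of $\CF(\mmod \La)$ (the objects $(0\rt X)$) and around the $\CU$-objects is exactly the substantive content, not bookkeeping. (Note that the easy implication --- a finite component of $\Gamma_{\mmod A}$ gives representation-finiteness and hence a finite $\Gamma_{\rm{H}(\mmod \La)}$ --- does not need this; the problematic direction is from a finite component upstairs to one downstairs.)

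The second gap is decisive for part $(2)$: your parenthetical claim that the stable Auslander algebra of an indecomposable algebra is connected is false. Take $\La$ to be the path algebra of $1\rt 2\rt 3$ modulo the composite of the two arrows; the only non-projective indecomposables are the simples $S_1$ and $S_2$, there are no nonzero (stable) morphisms between them, and the stable Auslander algebra is $k\times k$. For a disconnected algebra the classical criterion fails: a finite component of the AR quiver only controls one block and says nothing about the others, so ``$\Gamma_{\mmod A/\CP}$ has a finite component $\Rightarrow$ $A/\CP$ representation-finite'' is not available, and your chain of implications for $\CS(\mmod \La)$ breaks down. This is precisely why the paper verifies connectedness for $\CS(\mmod \La)$ itself (which does follow from $\La$ being indecomposable) and applies \cite[Lemma 5.1]{L} there, rather than to the quotient category $\CF(\underline{\rm{mod}}\mbox{-}\La)$, whose connectedness may be lost under $\Psi_{\mmod \La}$. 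To repair your argument you would either have to prove the AR-quiver comparison in full and then argue blockwise, or simply argue on $\rm{H}(\mmod \La)$ and $\CS(\mmod \La)$ directly, which is the paper's (shorter) proof.
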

\begin{proof}
The lemma follows from \cite[Lemma 5.1]{L}. We just point out that since $\La$ is an indecomposable algebra, the categories $\rm{H}(\mmod \La)$ and $\CS(\mmod \La)$ are connected. To prove the connectness, one may use the classification of projective objects in $\rm{H}(\mmod \La)$ (e.g. \cite[Chapter III, Proposition 2.5]{AuslanreitenSmalo}) and $\CS(\mmod \La)$ (\cite[Proposition 1.4]{RS2}).
\end{proof}
Now we are ready to state our second main theorem in this section. The proof of the following theorem is inspired by  the proof of \cite[Lemma 3.3 and Theorem 3.6]{G}.
\begin{theorem}\label{Theorem 6.5}
	Assume that $\CA$ is a locally bounded $k$-catgery  and $G$
	a torsion-free group of $k$-linear automorphisms of $\CA$ acting freely on the objects of $\CA,$   $F:\CA\rt \CB$ is a Galois covering functor as in Definition \ref{Def 2.1}, and the action of $G$ on $\CA$ have only finitely many $G$-orbits. Further, assume $\mmod \CB$ is of finite representation type and  connected.
	\begin{itemize}
		\item [$(i)$] The following conditions are equivalent.
		\begin{itemize}		
			\item [$(1)$] $\rm{H}(\mmod \CA)$ is locally bounded.
			\item [$(2)$] $\CF(\mmod \CA)$ is locally bounded.
			\item [$(3)$] $\rm{H}(\mmod \CB)$ is of finite representation type.
			\item [$(4)$] $\CF(\mmod \CB)$ is of finite representation type.
		\end{itemize}
		\item [$(ii)$] The following conditions are equivalent.
		\begin{itemize}
						\item [$(1)$] $\CS(\mmod \CA)$ is locally bounded.
			\item [$(2)$] $\CF(\rm{\underline{mod}}\mbox{-} \CA)$ is locally bounded.
			\item [$(3)$] $\CS(\mmod \CB)$ is of finite representation type.
			\item [$(4)$] $\CF(\rm{\underline{mod}}\mbox{-} \CB)$ is of finite representation type.
		\end{itemize}
	\end{itemize}
\end{theorem}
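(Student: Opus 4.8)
\emph{Plan.} The plan is to prove the statement by combining two ingredients that are already at our disposal: the equivalences relating morphism/monomorphism categories with functor categories furnished by Remark~\ref{remark 6.3}, and Gabriel's covering theorem applied not to $F$ itself but to the restriction of the push-down functor to the subcategories of indecomposables. First one reduces the four-term equivalences to a single non-formal implication. Since $\mmod\CB$ is of finite representation type, the orbit category $\CB$ is a representation-finite algebra, so Gabriel's theorem (\cite{G}; see the Introduction) applied to the Galois functor $F\colon\CA\rt\CB$ shows that $\CA$ is locally representation-finite; in particular $\CA$ is locally support-finite and $\mmod\CA$ is locally bounded, which is consistent with the running hypotheses of Sections~\ref{Subsection 3.2} and \ref{Section 5}. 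Now Remark~\ref{remark 6.3} applies: with $\CC=\mmod\CA$ it gives $(i)(1)\Leftrightarrow(i)(2)$ and $(ii)(1)\Leftrightarrow(ii)(2)$, and with $\CC=\mmod\CB$ (which is of finite representation type) it gives $(i)(3)\Leftrightarrow(i)(4)$ and $(ii)(3)\Leftrightarrow(ii)(4)$. Thus it remains to prove, in part $(i)$, that $\CF(\mmod\CA)$ is locally bounded if and only if $\CF(\mmod\CB)$ is of finite representation type, and the analogous statement in part $(ii)$ with $\underline{\rm{mod}}\mbox{-}\CA,\underline{\rm{mod}}\mbox{-}\CB$ replacing $\mmod\CA,\mmod\CB$.

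For part $(i)$, since $\CA$ is locally representation-finite the category $\rm{Ind}\mbox{-}\CA$ is locally bounded, and there are canonical identifications $\CF(\mmod\CA)\simeq\mmod(\rm{Ind}\mbox{-}\CA)$ and $\CF(\mmod\CB)\simeq\mmod(\rm{Ind}\mbox{-}\CB)$, under which ``$\CF(\mmod\CA)$ is locally bounded'' means precisely ``$\rm{Ind}\mbox{-}\CA$ is locally representation-finite'' and ``$\CF(\mmod\CB)$ is of finite representation type'' means ``$\rm{Ind}\mbox{-}\CB$ is of finite representation type'' (note $\rm{Ind}\mbox{-}\CB$ is a finite $k$-category because $\mmod\CB$ is of finite representation type). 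The key observation is then that, by Theorem~\ref{Theorem 2.2}, the push-down $F_{\la}\colon\mmod\CA\rt\mmod\CB$ sends indecomposables to indecomposables and, equipped with the restriction of the $G$-stabilizer $\delta^F$, its restriction $\overline{F_{\la}}\colon\rm{Ind}\mbox{-}\CA\rt\rm{Ind}\mbox{-}\CB$ is a dense $G$-precovering whose Hom-space isomorphisms are those of Theorem~\ref{Theorem 2.2}$(1)$; hence $\overline{F_{\la}}$ is a Galois $G$-covering of locally bounded $k$-categories with $\rm{Ind}\mbox{-}\CB\simeq(\rm{Ind}\mbox{-}\CA)/G$, where $G$ is torsion-free and acts freely with finitely many orbits (the orbits being in bijection with the finitely many objects of $\rm{Ind}\mbox{-}\CB$). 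Gabriel's theorem (\cite{G},\cite{BL}) applied to $\overline{F_{\la}}$ now yields that $\rm{Ind}\mbox{-}\CB$ is of finite representation type if and only if $\rm{Ind}\mbox{-}\CA$ is locally representation-finite; when $\rm{Ind}\mbox{-}\CA$ happens to be disconnected one argues component by component, using that $\rm{Ind}\mbox{-}\CB$ is connected since $\mmod\CB$ is. This is exactly $(i)(2)\Leftrightarrow(i)(4)$, completing part $(i)$.

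Part $(ii)$ is carried out along the same lines with $\underline{\rm{mod}}\mbox{-}(-)$ everywhere: by Proposition~\ref{Prop 2.6} the stable push-down $\underline{F_{\la}}\colon\underline{\rm{mod}}\mbox{-}\CA\rt\underline{\rm{mod}}\mbox{-}\CB$ is a $G$-precovering, and a Galois $G$-covering because $\CA$ is locally support-finite; as in the module case it preserves and reflects indecomposability up to the $G$-action (via the local endomorphism ring criterion, since $\underline{\rm{mod}}\mbox{-}\CB$ is Hom-finite Krull--Schmidt), so it restricts to a Galois $G$-covering $\rm{Ind}\mbox{-}\underline{\rm{mod}}\mbox{-}\CA\rt\rm{Ind}\mbox{-}\underline{\rm{mod}}\mbox{-}\CB$ between locally bounded $k$-categories whose target is finite, namely the stable Auslander algebra of $\CB$. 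Under $\CF(\underline{\rm{mod}}\mbox{-}\CA)\simeq\mmod(\rm{Ind}\mbox{-}\underline{\rm{mod}}\mbox{-}\CA)$ and its analogue over $\CB$, Gabriel's theorem again gives $(ii)(2)\Leftrightarrow(ii)(4)$, and Remark~\ref{remark 6.3} supplies $(ii)(1)\Leftrightarrow(ii)(2)$ and $(ii)(3)\Leftrightarrow(ii)(4)$.

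The only genuinely non-formal content lies in the implications $(2)\Leftrightarrow(4)$, i.e.\ that a locally representation-finite Galois cover of a representation-finite category is detected by local boundedness of the cover, and for this one must invoke Gabriel's covering theorem; everything else is the bookkeeping of Remark~\ref{remark 6.3}. The delicate points I expect to be the main obstacle are therefore: (a) fixing exactly what ``locally bounded'' means for the functor, morphism and monomorphism categories involved and checking the translations into local representation-finiteness of $\rm{Ind}\mbox{-}\CA$ and $\rm{Ind}\mbox{-}\underline{\rm{mod}}\mbox{-}\CA$; and (b) verifying that the restriction of $F_{\la}$ (resp.\ $\underline{F_{\la}}$) to indecomposables is a Galois $G$-covering in the generalized sense of \cite{BL} to which Gabriel's theorem can be applied, including the passage to connected components. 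One could alternatively prove $(1)\Leftrightarrow(3)$ directly through Auslander--Reiten quivers, using Theorem~\ref{Theorem 6.2} to realize $\overline{\rm{H} F}_{\la}$ (resp.\ $\overline{\CS \rm{F}}_{\la}$) as a Galois $G$-covering onto a union of components of $\Gamma_{\rm{H}(\mmod\CB)}$ (resp.\ $\Gamma_{\CS(\mmod\CB)}$) and Lemma~\ref{Lemma 6.4} to detect finite representation type by a finite component; but that route requires extra care with the covering multiplicities on individual components, which is precisely what invoking Gabriel's theorem on $\overline{F_{\la}}$ spares us.
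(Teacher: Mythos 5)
Your proposal is correct in substance, but it takes a genuinely different route from the paper. The paper's own proof of the bridge between the $\CA$-side and the $\CB$-side is Auslander--Reiten-theoretic: it invokes Theorem \ref{Theorem 6.2} (the Galois $G$-coverings of AR-quivers induced by $\rm{H}F_{\la}$ and $\CS F_{\la}$, which rest on the whole of Section \ref{Section 5} on preservation of almost split sequences) together with Lemma \ref{Lemma 6.4} (finite representation type detected by a finite connected component, which is where the connectedness of $\mmod \CB$ is used), and it uses Remark \ref{remark 6.3} for $(1)\Leftrightarrow(2)$ and $(3)\Leftrightarrow(4)$ exactly as you do. You bypass Section \ref{Section 5} and Theorem \ref{Theorem 6.2} entirely: from the standing hypothesis that $\mmod\CB$ is representation-finite you first get, by Gabriel's theorem for $F$, that $\CA$ is locally representation-finite (hence locally support-finite, so $F_{\la}$ and $\underline{F_{\la}}$ are dense), then you transport all four conditions through $\CF(\mmod\CA)\simeq\mmod(\rm{Ind}\mbox{-}\CA)$, $\CF(\underline{\rm{mod}}\mbox{-}\CA)\simeq\mmod(\rm{Ind}\mbox{-}\underline{\rm{mod}}\mbox{-}\CA)$ and their $\CB$-analogues, and apply Gabriel's theorem a second time to the induced coverings of the (stable) Auslander categories, using Theorem \ref{Theorem 2.2} and Proposition \ref{Prop 2.6} to identify $\rm{Ind}\mbox{-}\CB$ (resp.\ the stable Auslander algebra of $\CB$) with the orbit category of $\rm{Ind}\mbox{-}\CA$ (resp.\ $\rm{Ind}\mbox{-}\underline{\rm{mod}}\mbox{-}\CA$) by the torsion-free group $G$ with finitely many orbits. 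This is sound, and as a bonus your argument does not need the connectedness hypothesis at all. What it buys is economy (no input from Section \ref{Section 5} beyond classical covering theory); what it costs is that the orbit-category identifications you rely on are nowhere stated in the paper and must be verified, in particular the passage from the up-to-isomorphism equivariance $F_{\la}\circ g\simeq F_{\la}$ of a $G$-precovering to a Galois covering in the strict sense of Definition \ref{Def 2.1} (or, alternatively, an appeal to a version of Gabriel's theorem valid for Galois $G$-coverings in the sense of \cite{A,BL}), and likewise for the stable case where only the precovering statement of Proposition \ref{Prop 2.6} is available; these are standard but nontrivial verifications, which you correctly flag. By contrast, the paper's longer AR-quiver route also produces Theorem \ref{Theorem 6.2} itself, which it reuses afterwards (e.g.\ for the density of $\Phi$ discussed right after Theorem \ref{Theorem 6.5} and for Section \ref{Section 7}).
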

\begin{proof}
 We give a proof  for the case $(i)$. The equivalences $(1)\Leftrightarrow (2)$
  and $(3)\Leftrightarrow(4)$ follow from Remark \ref{Remark 7.6}. Assume $(1)$ holds. Then by Theorem \ref{Theorem 6.2} there is a finite connected component in $\Gamma_{\rm{H}(\mmod \CB)}$. Lemma \ref{Lemma 6.4} implies $(3)$. Suppose $(3)$ is true. Lemma \ref{Lemma 6.4} follows that the quiver morphism $\overline{\rm{H} F}_{\la}:\Gamma_{\rm{H}(\mmod \CA)}\rt \Gamma_{\rm{H}(\mmod \CB)} $ is surjective. This fact implies that the functor $\rm{H}F_{\la}$ is dense.  Let $\{\mathbb{X}_1,\cdots,\mathbb{X}_n\}$ be representatives of the indecomposable objects in $\rm{H}(\mmod \CB)$. Since $\rm{H}F_{\la}$ is dense, for each $1\leqslant i \leqslant n$, there exists an indecomposable $\mathbb{Y}_i$ in $\rm{H}(\mmod \CA)$ such that $\rm{H}F_{\la}(\mathbb{X}_i)=\mathbb{Y}_i$. 
  We claim that each indecomposable object $\mathbb{Y}$ in $\rm{H}(\mmod \CA)$ is isomorphic to ${}^gY_i$ for some $g \in G$ and $1\leqslant i \leqslant n.$ By the krull-Schmidt property of $\rm{H}(\mmod \CB)$ we have the isomorphism $\rm{H}F_{\la}(\mathbb{Y})\simeq \mathbb{X}^{m_1}_1\oplus \cdots \mathbb{X}^{m_n}_n.$ By applying $\rm{H}F_{\bullet}$ on the isomorphism, Proposition \ref{Prop 3.2} yields 
  $$\oplus_{g \in G} {}^g\mathbb{Y}\simeq \text{H}\text{F}_{\bullet}\circ \text{H}\text{F}_{\la}(\mathbb{Y})\simeq \text{H}\text{F}_{\bullet}(\mathbb{X}_1)^{m_1}\oplus \cdots \oplus \text{H}\text{F}_{\bullet}(\mathbb{X}_n)^{m_n}\simeq (\oplus_{g \in G} {}^g\mathbb{Y}_1)^{m_1}\oplus \cdots \oplus (\oplus_{g \in G} {}^g\mathbb{Y}_n)^{m_n} $$
  
So in view of \cite[Theorem 1]{W}, $\mathbb{Y}$ is isomorphic to ${}^g\mathbb{Y}_i$ for some $g \in G$ and $1\leqslant i \leqslant n$. For any $1 \leqslant i \leqslant n$, set $\mathbb{Y}_i=(A_i\st{f_i}\rt B_i)$. Since $\rm{Supp}(A_i\oplus B_i)$ is finite, we infer there are finitely many indecomposable objects of the form ${}^gY_j$ such that $\text{H}(\mmod \CA)({}^gY_i, {}^hY_j)\neq 0$, so desired result. 
\end{proof}
Suppose $\CF(\mmod \CA)$ is locally bounded. In this case $\rm{H}(\mmod \CB)$ has a single component, by making use of Theorem \ref{Theorem 6.2} we get the functor $\rm{H}F_{\la}$ is dense. According to the diagram given in Proposition \ref{Prop 4.2}, we observe that the functor $\Phi$ is dense. See  \cite[Remark 5.6]{P} for a discussion about density of the functor $\Phi.$ We also add that Proposition \ref{Prop 4.4} implies that $\Phi$ is dense if and only if so is $\rm{H}F_{\la}$.
\section{Self-injective algebras with the stable Auslander algebras of finite representation type}\label{Section 7}
Throughout this section assume $k$ is an algebraically closed field of
characteristic different from 2. As an application of our results on covering theory over monomorphism categories, we  determine for which types of representation-finite selfinjective having  the monomorphism category of finite representation type (or equivalently having  the stable Auslander algebra of finite representation type).\\ 
We recall the definition of the repetition of a category (cf. \cite{BG, Ha}).\\
Let $\CC$ be a $k$-category. The {\it repetitive category} $\widehat{\CC}$ of $\CC$ is  a $k$-category  defined as follows. The class of objects is $\{ (X,i) \mid X \in A, i\in Z \}$ and the morphism space is given by 
\begin{align*}
\widehat{\CC} \bigl((X,i), (Y,j)\bigr)=\begin{cases}
\CC(X,Y) & i=j, \\
D(\CC(Y,X)) & j=i+1, \\
0 & \textnormal{else}.
\end{cases}
\end{align*}
For $f\in \widehat{\CC} \bigl((X,i), (Y,j)\bigr)$ and $g\in \widehat{\CC} \bigl((Y,j), (Z,k)\bigr)$, the composition is given by 
\begin{align*}
g\circ f=\begin{cases}
g\circ f & i=j=k, \\
D\bigl(\CC(Z,f)\bigr)(g) & i=j=k-1, \\
D\bigl(\CC(g,X)\bigr)(f) & i+1=j=k, \\
0 & \textnormal{else}.
\end{cases}
\end{align*}
Note that if $\CC$ is locally bonded, then so is $\widehat{\CC}$.\\
An auto-equivalence $\phi$ of a Krull-Schmidt category $\CC$ is said to be {\it admissible} if the cycle group $G=\{\phi^i\mid i \in \mathbb{Z}\}$ acts
freely on isomorphism classes of indecomposable objects in $\CC$. For ease of notation the orbit category $\CC/G$ is denoted by $\CC/\phi.$

Let $\La$ be a hereditary  finite dimensional $k$-algebra and let $M$ be a tilting $A$-module. Then $B=\rm{End}_{\La}(M)$ is called a tilted algebra. If $\La\simeq k\Delta$, where $k\Delta$ is the path algebra of $\Delta$ over $k$, we will say that $B$ is a tilted algebra of type $\Delta$. In particular, if the underlying graph of  $\Delta$ is a Dynking diagram, $B$ is then called a tilted algebra of Dynkin type, that is representation-finite \cite[Chapter III, Section 5]{Ha}.\\
There is a well-known classification of representation-finite selfinjective algebras, due chiefly to Riedtmann \cite{BLR, Ri1, Ri2} (see also \cite [Theorm 3.5]{skowronski06} and \cite[Theorem 8.1]{SY}). 
The classification says: let  $\La$ be  a tilted algebra of Dynkin type, $\phi$ is an admissible auto-equivalence of the repetitive category $\widehat{\La}$. Then  the orbit category $\widehat{\La}/\phi$, viewed as a finite dimensional algebra (see \cite[Lemma 3.4]{DI}), is a representation-finite selfinjctive algebra. Conversely,   every representation-finite selfinjctive algebra over an algebraically closed field of characteristic different from 2 (as our convention in this section) is obtained  in this way.\\
Another  important known result about  representation-finite selfinjective algebras is the description of the stable Auslander-Reiten quiver $\Gamma^s_{\La}$, which is obtained by removing projective vertices of $\Gamma_{\La}$,   established by C. Riedtmann \cite{Ri3} and G. Todorov \cite{T}: $\Gamma^s_{\La}$  is isomorphic to the orbit quiver $\mathbb{Z}\Delta/G$,  where $\Delta$ is a Dynkin quiver (the underlying diagram of  $\Delta$ is a Dynkin diagram), and $G$ is an admissible infinite cyclic group of automorphisms of the translation quiver $\mathbb{Z}\Delta$. Therefore, we may associate to any selfinjective algebra $\La$ of finite representation type a Dynkin graph  $\Delta(\La)$, called the Dynkin type of $\La$,  such that $\Gamma^s_{\La}=\mathbb{Z}\Delta/G$ for a quiver $\Delta$ having $\Delta(\La)$ as underlying diagram. We also mention that, for a Dynkin quiver $\Delta$ and a tilted algebra B of type $\Delta$, the orbit algebras $ \widehat{B}/G $ are selfinjective algebras of finite representation type whose Dynkin type is the underlying graph of $\Delta$.

We need the following lemma of \cite{IO} (see also \cite{Ki} for an extension).
\begin{lemma}\label{Lemma 7.1}
	Assume that  $\La$ is a representation finite hereditary algebra. Then, there exists a triangle equivalence
	$$\underline{\CF(\mathbb{D}^{\rm{b}}(\mmod\La) )}\simeq \mathbb{D}^{\rm{b}}(\mmod \Gamma),$$
	 where $\Gamma$ is the stable Auslander algebra of $\La$, and recall that $\underline{\CF(\mathbb{D}^{\rm{b}}(\mmod\La) )}$ is the stable category of the  the Frobenius category $\CF(\mathbb{D}^{\rm{b}}(\mmod\La)) $of finitely presented functors from $\mathbb{D}^{\rm{b}}(\mmod \La)$ to the category of abelian groups $\CA b$.
	\end{lemma}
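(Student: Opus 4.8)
The plan is to reduce the claimed triangle equivalence to a statement about derived categories of stable Auslander algebras, using the theory of dualizing varieties and the key fact that, for a representation-finite hereditary algebra $\La$, the Auslander algebra governs the category $\CF(\mathbb{D}^{\rm{b}}(\mmod\La))$ of finitely presented functors. First I would recall the general principle (due to Iyama--Oppermann, and ultimately rooted in work of Auslander) that when $\CT$ is a triangulated category with a Serre functor which is moreover "$n$-representation finite" or of finite type, the functor category $\CF(\CT)$ is a Frobenius category whose stable category admits an intrinsic description. For $\CT = \mathbb{D}^{\rm{b}}(\mmod\La)$ with $\La$ representation-finite hereditary, $\CT$ is a Krull--Schmidt category with Serre functor (the derived Nakayama functor, shift of $\tau$), so $\CF(\CT)$ is a dualizing $k$-variety in the sense recalled in Section \ref{Section 5}; hence it has almost split sequences, enough projectives and injectives, and in fact projectives and injectives coincide (this uses that $\CT$ has a Serre functor, so the projective functor $\CT(-,X)$ is identified with the injective functor $D\CT(X,-)$ up to the Serre twist), making $\CF(\CT)$ a Frobenius category.

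Next I would identify the stable category $\underline{\CF(\mathbb{D}^{\rm{b}}(\mmod\La))}$ explicitly. The indecomposable projective-injective objects of $\CF(\CT)$ are the representable functors $\CT(-,X)$ for $X$ indecomposable in $\CT$; modulo these, a finitely presented functor is determined by a minimal presentation $\CT(-,X_1)\to\CT(-,X_0)\to M\to 0$, i.e. by a morphism in $\CT$ up to the relevant equivalence (this is exactly the morphism-category/functor-category dictionary of Construction \ref{Construction 4.1} and the remark following Proposition \ref{Prop 4.4}, applied with $\CC=\CT$). Since $\La$ is hereditary and representation-finite, the indecomposables of $\mathbb{D}^{\rm{b}}(\mmod\La)$ are the shifts of indecomposable $\La$-modules, and the Auslander algebra $A$ of $\La$ is well-defined; the functor $M\mapsto M\circ(\text{inclusion of }\mmod\La)$ relates $\CF(\mathbb{D}^{\rm{b}}(\mmod\La))$ to $\CF(\mmod\La)=\mmod A$, and passing to stable categories kills precisely the ideal $\CP$ of maps factoring through projective $\La$-modules, producing $\underline{\rm{mod}}\mbox{-}A = \mmod\Gamma$ where $\Gamma = A/\CP$ is the stable Auslander algebra. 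Upgrading this from an abelian/exact-category statement to a triangle equivalence requires recording that the suspension on $\underline{\CF(\CT)}$ (the Frobenius cosyzygy) corresponds under this dictionary to the shift on $\mathbb{D}^{\rm{b}}(\mmod\Gamma)$; one checks this on the standard generators and uses that a triangle functor between algebraic triangulated categories is determined by its action on a tilting/compact generator.

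The main obstacle, and the step I would spend the most care on, is the globalization: passing from the "module-level" equivalence $\underline{\CF(\mmod\La)\text{-type data}}\simeq\mmod\Gamma$ to the derived-category level $\underline{\CF(\mathbb{D}^{\rm{b}}(\mmod\La))}\simeq\mathbb{D}^{\rm{b}}(\mmod\Gamma)$. The clean way to do this is not to prove it from scratch but to invoke \cite{IO} directly: Iyama and Oppermann establish precisely that for a representation-finite hereditary algebra the stable category of the Frobenius category of finitely presented functors on the bounded derived category is triangle-equivalent to the bounded derived category of the stable Auslander algebra. So in the write-up I would state that this lemma is \cite[Theorem ...]{IO} (with the extension in \cite{Ki} noted), and include only the brief indications above as to why the statement is natural — the role of the Serre functor in making $\CF(\mathbb{D}^{\rm{b}}(\mmod\La))$ Frobenius, and the identification of the stable objects with the module category over $\Gamma = A/\CP$. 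Thus the "proof" is essentially a citation together with the structural remarks placing it in the framework of dualizing varieties and Construction \ref{FirstCoonstr} developed earlier in the paper.
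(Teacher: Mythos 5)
Your proposal takes essentially the same route as the paper: Lemma \ref{Lemma 7.1} is not proved there but simply quoted from Iyama--Oppermann \cite{IO} (with \cite{Ki} noted for an extension), together with the remark that $\CF(\CC)$ is Frobenius for any triangulated category $\CC$ by Freyd \cite[Theorem 3.1]{F}. The only minor correction to your surrounding discussion is that the Frobenius property of $\CF(\mathbb{D}^{\rm{b}}(\mmod \La))$ does not rely on a Serre functor or representation-finiteness at all --- it holds for finitely presented functors on an arbitrary triangulated category, which is exactly the form in which the paper invokes it.
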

Note that by \cite[Theorem 3.1]{F} we know that $\CF(\CC)$ is a  Frobenius category whenever $\CC$ is a triangulated category.

\begin{proposition}\label{proposition 7.2}
Let  $\La$ be  a tilted algebra of Dynkin type $\Delta$, $\phi$ is an admissible auto-equivalence of the repetitive category $\widehat{\La}$. Suppose $A$, resp. $\Gamma,$ is the stable Auslander algebra of the path algebra $k\Delta$, resp.  representation-finite selfinjective algebra $\widehat{\La}/\phi$. Then the following conditions are equivalent.
\begin{itemize}
	\item [$(1)$] The monomorphism category $\CS(\mmod \widehat{\La})$ is locally bounded.
	\item [$(2)$] The monomorphism category $\CS(\mmod \widehat{\La}/\phi)$ is of finite representation type.
	\item [$(3)$] The  algebra $\Gamma$ is representation-finite.	
		\item [$(4)$] The bounded derived category $\mathbb{D}^{b}(\mmod A)$ is locally bounded.	
\end{itemize}	
\end{proposition}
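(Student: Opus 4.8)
The plan is to establish the equivalences by passing through the functor-category incarnations developed in the paper and then invoking the classification of representation-finite selfinjective algebras. First I would set up the framework: since $\La$ is a tilted algebra of Dynkin type $\Delta$, the repetitive category $\widehat{\La}$ is a locally bounded $k$-category, and the admissible auto-equivalence $\phi$ generates a torsion-free infinite cyclic group $G=\langle\phi\rangle$ acting freely on the objects and on $\mathrm{ind}\mbox{-}\widehat{\La}$, with $\widehat{\La}/\phi$ (treated as a finite-dimensional algebra via \cite[Lemma 3.4]{DI}) playing the role of $\CB$. The orbit functor $F:\widehat{\La}\rt\widehat{\La}/\phi$ is then a Galois covering in the sense of Definition \ref{Def 2.1} with only finitely many $G$-orbits, and $\mmod(\widehat{\La}/\phi)$ is connected of finite representation type (it is a representation-finite selfinjective algebra). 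One must check local support-finiteness of $\widehat{\La}$ — this holds because $\widehat{\La}$ is locally representation-finite (being a Galois cover of a representation-finite algebra), and locally representation-finite implies locally support-finite, as recalled in Section \ref{Perliminary}.

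The equivalence $(1)\Leftrightarrow(2)$ is then exactly part $(ii)$ of Theorem \ref{Theorem 6.5} applied to $\CA=\widehat{\La}$, $\CB=\widehat{\La}/\phi$: $\CS(\mmod\widehat{\La})$ is locally bounded iff $\CS(\mmod(\widehat{\La}/\phi))$ is of finite representation type. For $(2)\Leftrightarrow(3)$ I would invoke Remark \ref{remark 6.3}: since $\widehat{\La}/\phi$ is a (locally bounded, being finite-dimensional) Krull–Schmidt abelian category of finite representation type, $\CS(\mmod(\widehat{\La}/\phi))$ is of finite representation type iff $\CF(\underline{\mmod}\mbox{-}(\widehat{\La}/\phi))$ is; and by Remark \ref{remark 6.3} (or the equivalence $\widehat{\Psi}_{\CC}:\CS(\CC)/\CV_{\CC}\rt\CF(\underline{\CC})$ together with \cite[Theorem 3.2]{H}) this functor category is, up to the standard identification, the module category over the stable Auslander algebra $\Gamma$ of $\widehat{\La}/\phi$ — here I use that the stable Auslander algebra of a representation-finite algebra $\Lambda$ has module category equivalent to $\CF(\underline{\mmod}\mbox{-}\Lambda)$. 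So $(3)$ says precisely $\Gamma$ is representation-finite.

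The interesting link is $(1)\Leftrightarrow(4)$, and this is where I expect the real work to lie. The plan is to combine part $(ii)$ of Theorem \ref{Theorem 6.5} with the triangle equivalence of Lemma \ref{Lemma 7.1}. Concretely: $\CS(\mmod\widehat{\La})$ is locally bounded iff $\CF(\underline{\mmod}\mbox{-}\widehat{\La})$ is locally bounded (Remark \ref{remark 6.3}); one then needs to identify $\CF(\underline{\mmod}\mbox{-}\widehat{\La})$, or rather its stable/triangulated shadow, with $\CF(\mathbb{D}^{\mathrm b}(\mmod k\Delta))$. This uses the classical fact (Happel) that for a tilted algebra $\La$ of Dynkin type $\Delta$ the stable module category $\underline{\mmod}\mbox{-}\widehat{\La}$ is triangle-equivalent to $\mathbb{D}^{\mathrm b}(\mmod k\Delta)$. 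Then Lemma \ref{Lemma 7.1} gives $\underline{\CF(\mathbb{D}^{\mathrm b}(\mmod k\Delta))}\simeq\mathbb{D}^{\mathrm b}(\mmod A)$, where $A$ is the stable Auslander algebra of $k\Delta$. Since passing to the stable category of a Frobenius category only removes the finitely many (up to the relevant group action) projective-injective objects, $\CF(\underline{\mmod}\mbox{-}\widehat{\La})$ is locally bounded iff $\underline{\CF(\mathbb{D}^{\mathrm b}(\mmod k\Delta))}$ is iff $\mathbb{D}^{\mathrm b}(\mmod A)$ is locally bounded; this closes the loop to $(4)$.

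The main obstacle, I expect, is not any single implication but rather the bookkeeping needed to make the identifications in the previous paragraph precise: one must be careful that ``locally bounded'' for a $\mathbb{Z}$-covered category like $\widehat{\La}$ corresponds correctly, under the Happel equivalence and the Frobenius-to-stable passage, to ``locally bounded'' for $\mathbb{D}^{\mathrm b}(\mmod A)$, keeping track of which finitely many objects are killed and verifying that the group actions on both sides match up so that local boundedness is preserved in both directions. Once these compatibilities are in place, the proposition follows by concatenating Theorem \ref{Theorem 6.5}(ii), Remark \ref{remark 6.3}, Lemma \ref{Lemma 7.1}, and the Happel equivalence, together with the observation that $A$ and $\Gamma$ are derived-equivalent (both arising as stable Auslander algebras in the same Dynkin type), which makes $(3)$ and $(4)$ manifestly equivalent.
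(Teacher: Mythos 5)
Your argument follows essentially the same route as the paper: verify that the Galois covering $\widehat{\La}\rt\widehat{\La}/\phi$ with $G=\langle\phi\rangle\simeq\mathbb{Z}$ meets the hypotheses of Theorem \ref{Theorem 6.5} (finitely many orbits, $\mmod\widehat{\La}/\phi$ connected representation-finite, local support-finiteness of $\widehat{\La}$ via Gabriel/\cite{DLS}), deduce $(1)\Leftrightarrow(2)\Leftrightarrow(3)$ from that theorem together with the identification $\mmod\Gamma\simeq\CF(\underline{\rm{mod}}\mbox{-}(\widehat{\La}/\phi))$, and obtain $(1)\Leftrightarrow(4)$ by identifying $\underline{\rm{mod}}\mbox{-}\widehat{\La}$ with $\mathbb{D}^{\rm b}(\mmod k\Delta)$, then using Lemma \ref{Lemma 7.1} and the fact that local boundedness passes between the Frobenius category $\CF(\mathbb{D}^{\rm b}(\mmod k\Delta))$ and its stable category because $\mathbb{D}^{\rm b}(\mmod k\Delta)$ is itself locally bounded. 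The only deviation is harmless: you invoke Happel's equivalence directly for the tilted algebra $\La$ (finite global dimension) to get $\underline{\rm{mod}}\mbox{-}\widehat{\La}\simeq\mathbb{D}^{\rm b}(\mmod\La)\simeq\mathbb{D}^{\rm b}(\mmod k\Delta)$, whereas the paper first passes from $\widehat{\La}$ to $\widehat{k\Delta}$ via Asashiba's derived equivalence of repetitive categories and singularity categories, and only then applies Happel to $k\Delta$; both identifications are legitimate and serve the same purpose.

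One point must be struck, though: your closing claim that $A$ and $\Gamma$ are derived equivalent, making $(3)\Leftrightarrow(4)$ ``manifest.'' This is both unnecessary (your equivalences $(1)\Leftrightarrow(2)\Leftrightarrow(3)$ and $(1)\Leftrightarrow(4)$ already close the cycle) and false in general: $\mmod\Gamma\simeq\CF(\underline{\rm{mod}}\mbox{-}(\widehat{\La}/\phi))$ is Frobenius by Freyd's theorem (since $\underline{\rm{mod}}\mbox{-}(\widehat{\La}/\phi)$ is triangulated), so $\Gamma$ is selfinjective, while the stable Auslander algebra $A$ of $k\Delta$ is typically not (for linear $\mathbb{A}_n$ it is an Auslander algebra of global dimension at most $2$); already the numbers of simple modules differ in general (e.g.\ for $k[x]/(x^5)$ one gets the preprojective algebra $\Pi_4$ with $4$ simples versus $6$ simples for $A$). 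Moreover, even a derived equivalence would not identify condition $(3)$ with condition $(4)$: representation-finiteness of an algebra is not equivalent to local boundedness of its bounded derived category (e.g.\ $k[x]/(x^2)$), so the covering-theoretic chain you built is genuinely needed and cannot be replaced by this shortcut.
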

\begin{proof}
	The Galois functor $\widehat{\La}\rt \widehat{\La}/\phi$, with $G=<\phi>\simeq \mathbb{Z}$, satisfies  the required assumption in Theorem \ref{Theorem 6.5} by use of \cite{G} or \cite{DLS}.  Now, the equivalences $(1)\Leftrightarrow (2)\Leftrightarrow(3)$ follow from the theorem. Consider the following three assertions.
	$$ \CF(\rm{\underline{mod}}\mbox{-} \widehat{\La}) \ \text{is locally bounded} \ \Leftrightarrow \ \text{ so is}  \ \CF(\rm{\underline{mod}}\mbox{-} \widehat{k\Delta}). \ \ \  \ \ \ \ \dagger$$
	Proof of the claim $(\dagger)$: since $\La$ is a tilted algebra of type $\Delta$, we get $\La$ is derived equivalent to the path algebra $k\Delta$, i.e., $\mathbb{D}^{\rm{b}}(\mmod \La)\simeq \mathbb{D}^{\rm{b}}(\mmod k\Delta).$ Hence by \cite[Theorem 1.5]{As} we obtain their repetitive categories are derived equivalent, i.e., $\mathbb{D}^{\rm{b}}(\mmod \widehat{\La})\simeq \mathbb{D}^{\rm{b}}(\mmod \widehat{k\Delta})$. The latter derived equivalence implies an equivalence between the stable categories $\rm{\underline{mod}}\mbox{-} \widehat{\La}\simeq \rm{\underline{mod}}\mbox{-} \widehat{k\Delta}$. In fact, the derived equivalence yields a triangle equivalence between their singularity categories, i.e.,
	$$\mathbb{D}_{\rm{sg}}(\widehat{\La})=\frac{\mathbb{D}^{\rm{b}}(\mmod \widehat{\La})}{\mathbb{D}^{\rm{per}}(\widehat{\La})}\simeq \frac{\mathbb{D}^{\rm{b}}(\mmod \widehat{k\Delta})}{\mathbb{D}^{\rm{per}}(\widehat{k\Delta})}=\mathbb{D}_{\rm{sg}}(\widehat{k\Delta}),$$
	where $\mathbb{D}^{\rm{per}}(\widehat{\La})$ and $\mathbb{D}^{\rm{per}}(\widehat{k\Delta})$ denote the category of prefect complexes of $\mathbb{D}^{\rm{b}}(\mmod \widehat{\La})$ and $\mathbb{D}^{\rm{b}}(\mmod \widehat{k\Delta})$, respectively.
	 Since $\mmod \widehat{\La}$ and $\mmod \widehat{k\Delta}$ are Frobenius categories, by the Buchweitz-Happel-Rickard's theorem we get the equivalence $\mathbb{D}_{\rm{sg}}(\widehat{\La})\simeq \rm{\underline{mod}}\mbox{-} \widehat{\La}$ and  $ \mathbb{D}_{\rm{sg}}(\widehat{k\Delta})\simeq \rm{\underline{mod}}\mbox{-} \widehat{k\Delta} $. Now  the obtained equivalence  $\rm{\underline{mod}}\mbox{-} \widehat{\La}\simeq \rm{\underline{mod}}\mbox{-} \widehat{k\Delta}$ follows the claim.\\
	 From Theorem \ref{Theorem 6.5} we have the following equivalent condition 
	$$ \CS(\mmod  \widehat{\La}) \ \text{is locally bounded} \ \Leftrightarrow \ \text{ so is}  \ \CF(\rm{\underline{mod}}\mbox{-} \widehat{\La}). \ \ \  \ \ \ \ \dagger\dagger$$
	The last equivalent condition we need is:
	$$ \CF(\mmod  \widehat{k\Delta}) \ \text{is locally bounded} \ \Leftrightarrow \ \text{ so is}  \ \mathbb{D}^{\rm{b}}(\mmod A). \ \ \  \ \ \ \ \dagger\dagger\dagger$$
	Proof of the above claim: since the path algebra  $k\Delta$ has of finite global dimension, by \cite[Theorem 4.9]{Ha} we have the equivalence $\mathbb{D}^{\rm{b}}(\mmod k\Delta)\simeq \rm{\underline{mod}}\mbox{-} \widehat{k\Delta}$. Hence $\CF(\mathbb{D}^{\rm{b}}(\mmod k\Delta))\simeq \CF(\rm{\underline{mod}}\mbox{-} \widehat{k\Delta}).$ Using this property that $\mathbb{D}^{\rm{b}}(\mmod k\Delta)$ (because of the equivalence $\mathbb{D}^{\rm{b}}(\mmod k\Delta)\simeq \rm{\underline{mod}}\mbox{-} \widehat{k\Delta}$ ) is locally bounded, we observe that  $\CF(\mathbb{D}^{\rm{b}}(\mmod k\Delta))\simeq \CF(\rm{\underline{mod}}\mbox{-} \widehat{k\Delta})$  is locally bounded if and only if so is the stable category $\underline{\CF(\mathbb{D}^{\rm{b}}(\mmod k\Delta) )}$. On the other hand, Lemma \ref{Lemma 7.1} implies the equivalence $\underline{\CF(\mathbb{D}^{\rm{b}}(\mmod k\Delta) )}\simeq \mathbb{D}^{\rm{b}}(\mmod A)$. Hence we get the claim.\\
 All the assertions  $(\dagger), (\dagger\dagger)$ and $(\dagger\dagger\dagger)$ prove the equivalence $(1)\Leftrightarrow (4)$. We are done.
\end{proof}
By the above result, the task of finding those representation-finite selfinjective algebras with  the associated monomorphism category of finite representation type (or equivalently with the stable Auslander algebra of finite representation type) is to classify those  representation-finite hereditary algebras whose their bounded derived categories are locally bounded.\\

\begin{corollary}\label{Corollary 7.3}
	Let $\La$, resp. $\La'$, be a tilted algebra of Dynkin type $\Delta$, resp. $\Delta'$, with the same underlying diagram, $\phi$, resp. $\phi'$,  is an admissible auto-equivalence of the repetitive category $\widehat{\La}$, resp. $\widehat{\La'}$.  Then $\CS(\mmod \widehat{\La}/\phi)$ is of finite representation type if and only if so is $\CS(\mmod \widehat{\La'}/\phi')$.
	\end{corollary}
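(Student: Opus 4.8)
The plan is to reduce Corollary~\ref{Corollary 7.3} to Proposition~\ref{proposition 7.2} by observing that the equivalent condition $(4)$ in that proposition, namely that $\mathbb{D}^{\mathrm{b}}(\mmod A)$ is locally bounded, depends only on the underlying Dynkin diagram of $\Delta$ and not on the chosen orientation or on the particular tilted algebra $\La$. First I would unwind the notation: by Proposition~\ref{proposition 7.2}, $\CS(\mmod \widehat{\La}/\phi)$ is of finite representation type if and only if $\mathbb{D}^{\mathrm{b}}(\mmod A)$ is locally bounded, where $A$ is the stable Auslander algebra of $k\Delta$; likewise $\CS(\mmod \widehat{\La'}/\phi')$ is of finite representation type if and only if $\mathbb{D}^{\mathrm{b}}(\mmod A')$ is locally bounded, where $A'$ is the stable Auslander algebra of $k\Delta'$. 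So it suffices to prove that $\mathbb{D}^{\mathrm{b}}(\mmod A)$ is locally bounded if and only if $\mathbb{D}^{\mathrm{b}}(\mmod A')$ is.

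The key step is that $k\Delta$ and $k\Delta'$ are derived equivalent whenever $\Delta$ and $\Delta'$ have the same underlying Dynkin graph; this is the classical fact that the derived category of a hereditary algebra of Dynkin type depends only on the underlying graph (all orientations of a tree are related by reflection functors / BGP tilts, inducing derived equivalences). Hence $\mathbb{D}^{\mathrm{b}}(\mmod k\Delta)\simeq \mathbb{D}^{\mathrm{b}}(\mmod k\Delta')$ as triangulated categories. Applying the functor-category construction $\CF(-)$ and Lemma~\ref{Lemma 7.1}, I get triangle equivalences
$$\mathbb{D}^{\mathrm{b}}(\mmod A)\simeq \underline{\CF(\mathbb{D}^{\mathrm{b}}(\mmod k\Delta))}\simeq \underline{\CF(\mathbb{D}^{\mathrm{b}}(\mmod k\Delta'))}\simeq \mathbb{D}^{\mathrm{b}}(\mmod A'),$$
where the middle equivalence is induced by the derived equivalence $\mathbb{D}^{\mathrm{b}}(\mmod k\Delta)\simeq \mathbb{D}^{\mathrm{b}}(\mmod k\Delta')$ passing through $\CF(-)$ (which is functorial on triangle equivalences, cf.\ the Frobenius-category structure in \cite{F}) and then to the stable category. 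Since being locally bounded is an invariant of a Hom-finite Krull--Schmidt category up to equivalence, $\mathbb{D}^{\mathrm{b}}(\mmod A)$ is locally bounded if and only if $\mathbb{D}^{\mathrm{b}}(\mmod A')$ is. Combining this with the equivalence $(2)\Leftrightarrow(4)$ of Proposition~\ref{proposition 7.2} applied to both sides yields the corollary.

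The main obstacle I anticipate is justifying cleanly that a triangle equivalence $\mathbb{D}^{\mathrm{b}}(\mmod k\Delta)\simeq \mathbb{D}^{\mathrm{b}}(\mmod k\Delta')$ induces a triangle equivalence $\underline{\CF(\mathbb{D}^{\mathrm{b}}(\mmod k\Delta))}\simeq \underline{\CF(\mathbb{D}^{\mathrm{b}}(\mmod k\Delta'))}$: one must check that $\CF(-)$ (finitely presented functors on a triangulated category, modulo the projective-injective objects) is suitably functorial, i.e.\ that an equivalence of the base triangulated categories lifts to an equivalence of the associated Frobenius categories compatible with the Frobenius structure, so that it descends to the stable categories. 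This is essentially automatic from the defining universal property of finitely presented functors and the description of projective-injectives in $\CF(\CC)$ as the representables $\CC(-,X)$, but it is the one place where some care is needed; everything else is bookkeeping with Proposition~\ref{proposition 7.2} and Lemma~\ref{Lemma 7.1}. Alternatively, and perhaps more transparently, one can bypass the middle step and argue directly as in the proof of Proposition~\ref{proposition 7.2}: the derived equivalence $\mathbb{D}^{\mathrm{b}}(\mmod \La)\simeq\mathbb{D}^{\mathrm{b}}(\mmod \La')$ (both being derived equivalent to the common hereditary algebra) yields, by \cite[Theorem 1.5]{As}, a derived equivalence of repetitive categories, hence an equivalence $\rm{\underline{mod}}\mbox{-}\widehat{\La}\simeq\rm{\underline{mod}}\mbox{-}\widehat{\La'}$ of stable categories via the singularity-category argument, and then $\CF(\rm{\underline{mod}}\mbox{-}\widehat{\La})$ is locally bounded iff $\CF(\rm{\underline{mod}}\mbox{-}\widehat{\La'})$ is; the chain of equivalences $(\dagger),(\dagger\dagger),(\dagger\dagger\dagger)$ in the proof of Proposition~\ref{proposition 7.2} then finishes it.
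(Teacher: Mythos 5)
Your proposal is correct and lands on the same reduction as the paper: by Proposition \ref{proposition 7.2}, everything comes down to showing that local boundedness of $\mathbb{D}^{\rm b}(\mmod A)$ depends only on the underlying Dynkin diagram, not on the orientation or on the chosen tilted algebra. The difference is how that key step is justified. The paper's proof is a three-line citation: by \cite[Page 166]{La} the stable Auslander algebras $A$ and $A'$ of $k\Delta$ and $k\Delta'$ are derived equivalent, hence $\mathbb{D}^{\rm b}(\mmod A)$ is locally bounded if and only if $\mathbb{D}^{\rm b}(\mmod A')$ is, and Proposition \ref{proposition 7.2} finishes. You instead re-derive this derived equivalence from the paper's own toolkit: the classical orientation-independence of $\mathbb{D}^{\rm b}(\mmod k\Delta)$ for Dynkin quivers, the functoriality of $\CF(-)$ under triangle equivalences, and Lemma \ref{Lemma 7.1} give
$\mathbb{D}^{\rm b}(\mmod A)\simeq\underline{\CF(\mathbb{D}^{\rm b}(\mmod k\Delta))}\simeq\underline{\CF(\mathbb{D}^{\rm b}(\mmod k\Delta'))}\simeq\mathbb{D}^{\rm b}(\mmod A')$.
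This buys self-containedness (you effectively prove the special case of Ladkani's result that is needed) at the cost of the check you correctly single out: that an equivalence of the base triangulated categories lifts to an equivalence of the Frobenius categories $\CF(-)$ preserving the projective-injective objects, so that it descends to the stable categories; this is indeed automatic because the projective(-injective) objects of $\CF(\CT)$ are exactly the representables, which any equivalence preserves. Your alternative bypass through the repetitive categories (derived equivalence of $\La$ and $\La'$, Asashiba's theorem, the singularity-category identification, and then the chain $(\dagger),(\dagger\dagger),(\dagger\dagger\dagger)$) is also valid, but it mirrors the proof of Proposition \ref{proposition 7.2} rather than the corollary's own shorter citation-based argument.
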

\begin{proof}
Let $A$, resp. $A'$, be the stable Auslander algebra of the path algebra $k\Delta$, resp. $k\Delta'$.	We know by \cite[Page 166]{La},  asserting that  the stable Auslander algebras of the  path algebras of two quivers $\CQ$ and $\CQ'$ with  two orientations of a Dynkin diagram  are derived equivalent, $A$ and $A'$ are derived equivalent. Hence $\mathbb{D}^{\rm{b}}(\mmod A)$ is locally bounded if and only if so is $\mathbb{D}^{\rm{b}}(\mmod A')$. Proposition \ref{proposition 7.2} implies the claim.
\end{proof}
\begin{proposition} \label{proposition 7.4}
	Let $\Delta$ be a quiver of Dynkin
	 type $\mathbb{D}_n (n\geqslant 4), \mathbb{E}_6, \mathbb{E}_7$ and $\mathbb{E}_8$. Then the monomorphism  category $\CS(k\Delta)$ is of infinite representation type. In particular, the bounded derived category of the stable Auslander algebra of
	  $k\Delta$ is not locally bounded.
\end{proposition}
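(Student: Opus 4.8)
The plan is to reduce the claim to the single quiver $\mathbb{D}_4$ and then to produce by hand an infinite $\mathbb{P}^1(k)$-indexed family of pairwise non-isomorphic indecomposable objects of $\CS(\mmod k\mathbb{D}_4)$; the ``in particular'' clause then follows formally from Remark~\ref{remark 6.3}. For the reduction I would use that each of $\mathbb{D}_n$ $(n\geqslant 4)$, $\mathbb{E}_6$, $\mathbb{E}_7$, $\mathbb{E}_8$ contains, as a \emph{full} subquiver, the four-vertex star $\mathbb{D}_4$ consisting of the unique trivalent vertex together with its three neighbours. For any full subquiver $\Delta'\subseteq\Delta$ the extension-by-zero functor $j\colon\mmod k\Delta'\rt\mmod k\Delta$ (put $M(x)=0$ for $x\notin\Delta'$) is exact and fully faithful, so it induces an exact, fully faithful functor $\CS(\mmod k\Delta')\rt\CS(\mmod k\Delta)$, $(X\hookrightarrow Y)\mapsto(jX\hookrightarrow jY)$, which reflects indecomposability and isomorphism; hence it is enough to prove that $\CS(\mmod k\mathbb{D}_4)$ has infinitely many indecomposables, for every orientation of $\mathbb{D}_4$.

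To construct the family, fix an orientation of $\mathbb{D}_4$, write $0$ for the central vertex, and let $Y$ be the (up to isomorphism unique) indecomposable $k\mathbb{D}_4$-module whose dimension vector is the highest root, so $Y_0\cong k^2$ and $\dim_k Y_i=1$ for each leaf $i$. For a line $\ell\subseteq Y_0$ take the sub-representation $X_\ell\subseteq Y$ with $(X_\ell)_0=\ell$ and with each $(X_\ell)_i$ as large as the submodule condition allows; for all but finitely many $\ell$ this gives $(X_\ell)_i=Y_i$ at each leaf $i$ with $0\to i$ and $(X_\ell)_i=0$ at each leaf with $i\to 0$, with the structure maps out of $\ell$ isomorphisms, so $X_\ell$ is indecomposable, being generated by its one-dimensional component at $0$ (indeed $\dim X_\ell$ is a positive root of $\mathbb{D}_4$). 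Put $\mathbb{Y}_\ell=(X_\ell\hookrightarrow Y)\in\CS(\mmod k\mathbb{D}_4)$. Since $k\mathbb{D}_4$ is representation-finite hereditary, all its indecomposables are bricks, so $\End(X_\ell)=\End(Y)=k$; an endomorphism of $\mathbb{Y}_\ell$ is therefore a pair of scalars that the inclusion forces to coincide, whence $\End_{\CS(\mmod k\mathbb{D}_4)}(\mathbb{Y}_\ell)\cong k$ and $\mathbb{Y}_\ell$ is indecomposable, while an isomorphism $\mathbb{Y}_\ell\simeq\mathbb{Y}_{\ell'}$ is given at the vertex $0$ by a scalar automorphism of $Y_0$ matching $\ell$ with $\ell'$, so $\ell=\ell'$. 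As $k$ is infinite, $\{\mathbb{Y}_\ell\}$ is an infinite family of pairwise non-isomorphic indecomposables, so $\CS(\mmod k\mathbb{D}_4)$, and hence $\CS(\mmod k\Delta)$, is of infinite representation type.

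For the ``in particular'', since $k\Delta$ is representation-finite, $\mmod k\Delta$ is a locally bounded Krull-Schmidt abelian category, so Remark~\ref{remark 6.3} gives that $\CF(\underline{\rm{mod}}\mbox{-}k\Delta)$ is of infinite representation type; as $\CF(\underline{\rm{mod}}\mbox{-}k\Delta)\simeq\mmod A$ with $A$ the stable Auslander algebra of $k\Delta$, the algebra $A$ is representation-infinite, and therefore $\mathbb{D}^{\rm{b}}(\mmod A)$ is not locally bounded; already an indecomposable projective $A$-module admits non-zero homomorphisms to infinitely many pairwise non-isomorphic indecomposable $A$-modules, all lying in $\mmod A\subseteq\mathbb{D}^{\rm{b}}(\mmod A)$. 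The only step that genuinely requires care is the verification in the middle paragraph that the $\mathbb{Y}_\ell$ are indecomposable and remain pairwise non-isomorphic as $\ell$ varies; both reduce to the brick property of indecomposables over a Dynkin quiver. Conceptually, what makes the argument run is that passing to the submodule category effectively attaches the missing fourth leg to $\mathbb{D}_4$, turning it into the representation-infinite ``four-subspace'' quiver $\widetilde{\mathbb{D}}_4$.
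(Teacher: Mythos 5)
Your proof is correct, but it takes a genuinely different route from the paper's. The paper never works inside $\CS(\mmod k\Delta)$ directly: it uses Remark \ref{remark 6.3} to reduce the claim to showing that the stable Auslander algebra $A$ of $k\Delta$ is representation-infinite, recalls that the quiver with relations of $A$ is the stable Auslander--Reiten quiver of $k\Delta$ with mesh relations, observes that for types $\mathbb{D}_n$ and $\mathbb{E}_{6,7,8}$ this quiver contains a mesh with three indecomposable middle terms (coming from an almost split sequence with no projective summand in the middle term), and then quotes that the algebra given by this bound subquiver is representation-infinite (Happel--Vossieck list); the assertion on $\mathbb{D}^{\rm{b}}(\mmod A)$ is obtained, exactly as you do, from the embedding $\mmod A\hookrightarrow \mathbb{D}^{\rm{b}}(\mmod A)$. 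You instead stay inside the monomorphism category: you restrict along the full subquiver $\mathbb{D}_4$ (extension by zero is exact and fully faithful, hence induces a fully faithful functor between the monomorphism categories, valid for every orientation), and you exhibit an explicit $\mathbb{P}^1(k)$-family $(X_\ell\hookrightarrow Y)$ with $Y$ the highest-root indecomposable, using that indecomposables over a Dynkin quiver are bricks to get $\End(\mathbb{Y}_\ell)\cong k$ and pairwise non-isomorphy. Your approach is more elementary and self-contained: it needs neither the description of the quiver of $A$ nor the Happel--Vossieck classification, and it produces concrete one-parameter families witnessing infinite type; the paper's approach is shorter and is uniform with the treatment of type $\mathbb{A}$ in Proposition \ref{Proposition 7.5}, where the same reduction to the stable Auslander algebra is reused, at the cost of outsourcing the representation-infiniteness to a cited classification. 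Two minor points in your write-up: the parenthetical ``$\dim X_\ell$ is a positive root'' does not by itself prove indecomposability, but your generation argument does (being generated by the one-dimensional component at the centre means $X_\ell$ is a quotient of $P_0$, hence has simple top); and ``all but finitely many $\ell$'' should be read as $\ell$ avoiding the kernels of the surjections $Y_0\rt Y_i$ (arrows out of the centre) and the images of the injections $Y_i\rt Y_0$ (arrows into the centre), which is indeed a finite set of lines, so the family is infinite because $k$ is algebraically closed and hence infinite.
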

\begin{proof}
Let $A$ be the stable Auslander algebra of the path algebra $k\Delta$. By Remark \ref{remark 6.3} it suffices to prove that the algebra $A$ is of infinite representation type. We know by a general fact that the bounded quiver of $A$ is obtained by removing the projective vertices of the Auslander-Reiten quiver $\Gamma_{\mmod k\Delta}$ with mesh relations. In case that $\Delta$ is one of the type in the statement, one can see that 	 the stable Auslander-Reiten quiver of $k\Delta$ contains a subquiver as the following 
$$
\xymatrix@-5mm{
	&&&&\bullet\ar[dr]\\
	&&&\bullet\ar[dr]\ar[r]\ar[ur]&\bullet\ar[r]&\bullet\\
	&&&&\bullet\ar[ur]&&	}
$$
In fact, it is induced by  an almost split sequence with no projective direct summand in the middle term, which exists, see e.g. \cite[Chapter VII]{ASS}. Therefore, the bounded quiver of $A$ contains the above bounded subquiver bt the fact mentioned in the beginning of the proof. Since the algebra defined by the above bound subquiver is clearly 
 of infinite representation type (e.g.  Happel-Vossieck list \cite{HV}), hence so  is $A$. The last assertion follows from the embedding $\mmod A\hookrightarrow \mathbb{D}^{b}(\mmod A)$.  We are done.
\end{proof}
Hence by the  above proposition we only need to focus on hereditary algebras with Dynkin type $\mathbb{A}_n.$\\
The below result  concerning the monomorphism categories over hereditary  algebras (of Dynkin type $\mathbb{A}$) is of independent interest, although we do not use it for our investigation.
\begin{proposition}\label{Proposition 7.5}
	Let $\Delta$ be a quiver of Dynkin
	type $\mathbb{A}_n$ with the linear orientation
	$$\Delta: 1\rt 2\rt \cdots\rt n. $$
	  Then the monomorphism  category $\CS(k\Delta)$ is of finite representation type if and only if $n\leqslant 5$.
\end{proposition}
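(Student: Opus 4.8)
The plan is to push the question through the functorial dictionary of the previous sections and turn it into a problem about one finite‑dimensional algebra. Since $k\mathbb{A}_n$ (linearly oriented) is representation‑finite, Remark \ref{remark 6.3} gives that $\CS(k\mathbb{A}_n)$ is of finite representation type if and only if $\CF(\underline{\rm{mod}}\mbox{-}k\mathbb{A}_n)$ is, and by the stable‑Auslander‑algebra correspondence recalled in the Introduction (cf. \cite{H}) this latter category is equivalent to $\mmod \Gamma_n$, where $\Gamma_n$ is the stable Auslander algebra of $k\mathbb{A}_n$. So the statement becomes: $\Gamma_n$ is representation‑finite if and only if $n\le 5$. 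The point is that $\Gamma_n$ is completely explicit: its Gabriel quiver is the triangular Auslander--Reiten quiver of $k\mathbb{A}_n$ with the $n$ vertices of the indecomposable projectives (one side of the triangle) and their incident arrows deleted, and its relations are the mesh relations of $\Gamma_{k\mathbb{A}_n}$ that survive this deletion — those meshes one of whose middle summands was projective collapsing to zero‑relations. Thus $\Gamma_n$ is a directed, simply connected‑type algebra supported on a "truncated triangle'' with $\binom{n}{2}$ vertices.

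For the implication $n\ge 6\Rightarrow$ infinite type I would first reduce to $n=6$: the full convex subquiver $1\to 2\to\cdots\to 6$ of $1\to 2\to\cdots\to n$ gives an exact, fully faithful embedding $\mmod k\mathbb{A}_6\hookrightarrow \mmod k\mathbb{A}_n$ (extension by zero) that sends monomorphisms to monomorphisms, hence a full embedding $\CS(k\mathbb{A}_6)\hookrightarrow \CS(k\mathbb{A}_n)$ closed under summands; so it suffices that $\CS(k\mathbb{A}_6)$, equivalently $\Gamma_6$, be of infinite representation type. Then, in the spirit of the proof of Proposition \ref{proposition 7.4}, I would locate inside the bound quiver of $\Gamma_6$ a convex subquiver with (zero and commutativity) relations which is a minimal representation‑infinite algebra — a tame concealed bound quiver from the Happel--Vossieck list \cite{HV}; since such a convex subalgebra is a quotient of $\Gamma_6$ and $\mmod$ of a quotient embeds fully, this forces $\Gamma_6$, and hence $\Gamma_n$ for all $n\ge 6$, to be of infinite representation type. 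Note that, unlike the $\mathbb{D}$/$\mathbb{E}$ cases of Proposition \ref{proposition 7.4}, the obstruction here is not a single star‑shaped mesh (all almost split sequences over $\mathbb{A}_n$ have at most two middle summands), so one genuinely has to produce a larger Euclidean‑type piece of the truncated triangle.

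For $n\le 5\Rightarrow$ finite type I would show $\Gamma_n$ is representation‑finite directly. The cleanest route is covering‑theoretic, matching the earlier sections: write down the simply connected universal cover of the truncated‑triangle bound quiver of $\Gamma_n$ and check that for $n\le 5$ it is locally representation‑finite (its underlying shape being a locally bounded truncation of a Dynkin‑type mesh pattern), so that Gabriel's theorem \cite{G} yields that $\Gamma_n$ itself is representation‑finite. Alternatively, for each of $n=2,3,4,5$ one can simply enumerate the indecomposables of $\CS(k\mathbb{A}_n)$ by hand, using that a subrepresentation of a representation of a linearly ordered $\mathbb{A}_n$ is encoded by a nested family of subintervals, which reduces the classification to a finite combinatorial count.

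The main obstacle is the bookkeeping around $\Gamma_n$: one must pin down exactly which mesh relations survive the deletion of the projective column, and then (a) exhibit a concrete minimal representation‑infinite convex subalgebra of $\Gamma_6$ and (b) verify that no such subalgebra, nor any other obstruction to finiteness, occurs in $\Gamma_5$ — i.e. the sharpness of the threshold $n=5$ is the delicate part and is where the combinatorics of the staircase‑with‑mesh‑relations has to be controlled precisely. Once Proposition \ref{Proposition 7.5} is established for the linear orientation, Corollary \ref{Corollary 7.3} immediately upgrades it to every orientation of $\mathbb{A}_n$.
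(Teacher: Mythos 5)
Your opening reduction (via Remark \ref{remark 6.3}, $\CS(k\Delta)$ is of finite representation type iff the stable Auslander algebra $\Gamma_n$ of $k\Delta$ is representation-finite) matches the paper, but from that point on your argument is only a plan, and both of its load-bearing steps are left undone. For $n\geqslant 6$ you say you ``would locate'' a tame concealed convex subcategory of $\Gamma_6$ from the Happel--Vossieck list \cite{HV}; no such subquiver with its relations is exhibited, and this is exactly the nontrivial bookkeeping you yourself flag (note also that a convex subcategory $e\Gamma_6 e$ is in general not a quotient of $\Gamma_6$, so the fullness/summand argument you invoke needs the idempotent functor $M\mapsto Me$ instead). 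For $n\leqslant 5$ neither of your two suggested routes is carried out: checking that a universal cover of the bound quiver of $\Gamma_5$ is locally representation-finite is precisely the hard verification, not a routine step; and the proposed ``enumeration'' shortcut is based on a false premise --- an indecomposable object of $\CS(k\Delta)$ need not have indecomposable ambient module, so indecomposables are \emph{not} encoded by nested pairs of intervals (the wildness of $\CS(k[x]/(x^n))$ for $n\geqslant 7$ in \cite{RS1} already shows the classification of pairs is far from a finite interval count). So as written the sharp threshold $n=5$ is asserted, not proved.

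The paper avoids all of this with one structural observation you missed: for the linear orientation, deleting the column of projective vertices from the Auslander--Reiten quiver of $k\mathbb{A}_n$ (with the surviving mesh relations) yields exactly the bound quiver of the \emph{ordinary} Auslander algebra of $k\mathbb{A}_{n-1}$ (linearly oriented, obtained by deleting the sink); hence the stable Auslander algebra $\Gamma_n$ is isomorphic to that Auslander algebra. The classification of representation-finite algebras whose Auslander algebra is again representation-finite (\cite{IPTZ}, see also \cite{LS}) then gives representation-finiteness precisely for $n-1\leqslant 4$, i.e.\ $n\leqslant 5$, settling both directions at once. If you want to keep your hands-on route, you must actually produce the Euclidean obstruction in $\Gamma_6$ and give a genuine finiteness argument for $\Gamma_5$ (e.g.\ by covering plus an explicit knitting of the cover, or by invoking a classification result such as the one the paper cites); otherwise the proof is incomplete.
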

\begin{proof}
	We know that the Auslander-Reiten quiver of $\Delta$ has the following shape
$$\xymatrixrowsep{16pt} \xymatrixcolsep{20pt}
\xymatrix@!=0.1pt{&&&& \circ \ar[dr]&&&&\\
	&&&\circ \ar[ur]\ar[dr] \ar@{<.}[rr]&& \circ \ar[dr]&&&\\
	&& \circ\ar[ur] \ar@{<.}[rr] && \circ\ar[ur] \ar@{<.}[rr] && \circ
	&&\\
}$$ \vspace{-14pt}
$$\iddots \quad\;\; \quad \; \iddots \;\;\;  \ddots \;\quad \;\;\;\;\; \ddots$$
\vspace{-22pt}
$$\hspace{1pt}\xymatrixrowsep{16pt}
\xymatrixcolsep{20pt}\xymatrix@!=0.1pt{& \circ\ar[dr] \ar@{<.}[rr] && \circ   &\cdots& \circ\ar[dr]  \ar@{<.}[rr]  && \circ \ar[dr] & \\
	\circ \ar[ur]\ar@{<.}[rr] && \circ \ar[ur]  &\cdots&&\cdots&
	\circ\ar[ur] \ar@{<.}[rr]&& \circ,}\vspace{2pt}
$$
with $n$ vertices in the leftmost side. As mentioned in the proof of previous proposition, the  bounded quiver of the stable Auslander algebra $A$ of $k\Delta$ is obtained   by deleting  vertices in the leftmost side (projective vertices) of $\Gamma_{\mmod k\Delta}$, and with mesh relations. This fact in view of the structure of the Auslander-Reiten quivers of linear quivers implies that the algebra $A$ is isomorphic as algebras to the Auslander algebra of $k\Delta'$, where $\Delta'$  is obtained by $\Delta$ with deleting the sink vertex $n$. The characterization given in \cite{IPTZ} (see also \cite{LS}), for those representation-finite algebras  with the associated Auslander algebra of finite representation type,  implies that the stable Auslander algebra $A$ is of finite representation type (equivalently, $\CS(k\Delta)$ is of finite representation type)  if and only if  $n\leqslant 5$. So we are done.
\end{proof}
The above two results (Propositions \ref{proposition 7.4}, \ref{Proposition 7.5}) are taken from the first version of \cite{H} (available on arXiv) due to the first named author. These results are not included in the accepted  version of \cite{H}.
\begin{remark}\label{Remark 7.6}
Recall that a finite dimensional $k$-algebra $\La$ is a Nakayama algebra if any indecomposable is uniserial, i.e. it has a unique composition series (\cite{AuslanreitenSmalo}, p.197). In this case $\La$ is representation-finite. Every connected self-injective Nakayama algebra is Morita equivalent to $\La(n, t), n \geqslant 1, t \geqslant 2$ (\cite{GR}, p.243), which is defined  as follows. Let $C_n$ be the cyclic quiver with $n\geqslant 1 $ vertices  and $J$ the ideal generated by all arrows, and then for any $ t \geqslant 2$, set $\La(n, t):= kC_n/J^t$. By the structure of almost split sequences of $\mmod\La(n, t)$   given in \cite{AuslanreitenSmalo}[VI.2], we can deduce  $\Gamma^s_{\La(n, t)}=\mathbb{Z}A_{t-1}/<\tau^n>$, where $A_{t-1}$ is a linear quiver with $t-1$ vertices. 	
\end{remark}

\begin{proposition}
	Let $\Delta$ be a quiver of Dynkin
	type $\mathbb{A}_n$, and let $A$ be the stable Auslander algebra of $k\Delta$. Then the bounded derived category $\mathbb{D}^{\rm{b}}(\mmod A)$ is locally  bounded if and only if $n\leqslant 4$.
\end{proposition}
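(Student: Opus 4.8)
The plan is to obtain the statement as a direct consequence of Proposition \ref{proposition 7.2}, which already carries out the covering-theoretic reduction, by evaluating the equivalent conditions appearing there on a concrete self-injective model of Dynkin type $\mathbb{A}_n$.

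First I would record that $k\Delta$ is a tilted algebra of type $\Delta$ (via the regular tilting module $k\Delta$ itself), and that the repetitive category $\widehat{k\Delta}$ admits an admissible auto-equivalence, for instance the grading shift $\phi\colon (X,i)\mapsto (X,i+1)$: it has infinite order and acts freely on the isomorphism classes of indecomposable $\widehat{k\Delta}$-modules, since any indecomposable has finite support in the $\mathbb{Z}$-grading. For this choice $\widehat{k\Delta}/\phi$ is the trivial extension $T(k\Delta)$, a representation-finite self-injective algebra whose Dynkin type is $\mathbb{A}_n$, as recalled before Proposition \ref{proposition 7.2}. Applying that proposition with $\La=k\Delta$ and this $\phi$ — so that the algebra denoted $A$ there is exactly the $A$ of the present statement — yields the equivalence: $\mathbb{D}^{\rm{b}}(\mmod A)$ is locally bounded if and only if $\CS(\mmod\,T(k\Delta))$ is of finite representation type.

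Next I would use Corollary \ref{Corollary 7.3} to transport this last condition to the truncated polynomial algebra $k[x]/(x^{n+1})$. This is the self-injective Nakayama algebra $\La(1,n+1)$ of Remark \ref{Remark 7.6}, whose stable Auslander--Reiten quiver is $\mathbb{Z}\mathbb{A}_n/\langle\tau\rangle$; hence it is of Dynkin type $\mathbb{A}_n$, and — as the characteristic of $k$ is different from $2$ — it is of the form $\widehat{\La'}/\phi'$ for a tilted algebra $\La'$ of Dynkin type $\mathbb{A}_n$ by the classification quoted before Proposition \ref{proposition 7.2}, so Corollary \ref{Corollary 7.3} applies and gives that $\CS(\mmod\,T(k\Delta))$ is of finite representation type if and only if $\CS(\mmod\,k[x]/(x^{n+1}))$ is. It then remains to invoke the Ringel--Schmidmeier classification (\cite{RS1}, Corollary \ref{Corollary 7.8} for the self-injective Nakayama case, and the discussion in the Introduction): $\CS(\mmod\,k[x]/(x^{m}))$ is of finite representation type exactly when $m\leqslant 5$ — equivalently, the preprojective algebra $\Pi_n$, which by \cite{DL} is the stable Auslander algebra of $k[x]/(x^{n+1})$, is representation-finite exactly when $n\leqslant 4$. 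Taking $m=n+1$ and chaining these equivalences gives that $\mathbb{D}^{\rm{b}}(\mmod A)$ is locally bounded if and only if $n\leqslant 4$.

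The substantive content has already been absorbed into Proposition \ref{proposition 7.2} (whose proof runs through Happel's tilting theory, singularity categories, the functor $\Phi$, and Lemma \ref{Lemma 7.1}) and into Remark \ref{Remark 7.6}, so the remaining difficulty is only careful bookkeeping: checking that both $T(k\Delta)$ and $k[x]/(x^{n+1})$ lie within the hypotheses of Proposition \ref{proposition 7.2} and Corollary \ref{Corollary 7.3} — which rests on Riedtmann's classification of representation-finite self-injective algebras together with the identification of the stable Auslander--Reiten quiver of $k[x]/(x^{n+1})$ — and recognizing the self-injective model for which the representation type is classically known. The one genuinely external ingredient is the classification of $\CS(\mmod\,k[x]/(x^{m}))$ up to representation type, equivalently the list of representation-finite preprojective algebras of type $\mathbb{A}$; this is where the threshold $n\leqslant 4$ enters.
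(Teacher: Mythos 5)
Your argument is correct and is essentially the paper's own reduction: both proofs rest on Proposition~\ref{proposition 7.2}, Remark~\ref{Remark 7.6}, Riedtmann's classification, and the Ringel--Schmidmeier fact from \cite{RS1} that $\CS(\mmod k[x]/(x^{m}))$ is representation-finite exactly when $m\leqslant 5$; the only difference is that you apply Proposition~\ref{proposition 7.2} to the trivial extension $\widehat{k\Delta}/\phi\simeq T(k\Delta)$ and then transfer to $k[x]/(x^{n+1})$ via Corollary~\ref{Corollary 7.3}, whereas the paper first uses Ladkani's derived equivalence to reduce to the linearly oriented quiver and applies Proposition~\ref{proposition 7.2} directly to the Nakayama algebra $k[x]/(x^{n})$. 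One small caution: rely only on \cite{RS1} for the threshold $m\leqslant 5$ and drop the parenthetical appeal to Corollary~\ref{Corollary 7.8}, since that corollary is deduced downstream of the present proposition and citing it here would be circular.
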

\begin{proof}
Thanks to \cite[Page 166]{La}, we may assume that $$\Delta=A_n: 1\rt 2\rt \cdots\rt n. $$ Set $B_n$ the stable Auslander algebra of $kA_n$. As mentioned in Introduction, it was proved in \cite{RS1} that the monomorphism category $\CS(\mmod k[x]/(x^n))$ is of finite representation type (or equivalently the stable Auslander algebra of $k[x]/(x^n)$ is representation-finite) if and only if $n\leqslant 5$. In view of Remark \ref{Remark 7.6}, $\Gamma^s_{k[x]/(x^n)}=\mathbb{Z}A_{n-1}/<\tau>$. Proposition  \ref{proposition 7.2} implies:
$$ \CS(\mmod k[x]/(x^n)) \ \text{is of finite represenation type}  \Leftrightarrow \mathbb{D}^{b}(\mmod B_{n-1}) \ \text{is locally bounded}. $$
Now the above-mentioned facts imply the assertion.
\end{proof}
Now the results established in the above imply our main result in this section.
\begin{theorem}
	Let  $\La$ be  a tilted algebra of Dynkin type $\Delta$, $\phi$ is an admissible auto-equivalence of the repetitive category $\widehat{\La}$. Suppose $A$, resp. $\Gamma,$ is the  stable Auslander algebra of the path algebra $k\Delta$, resp.  representation-finite selfinjective algebra $\widehat{\La}/\phi$.Then the following conditions are equivalent.
	\begin{itemize}
		\item [$(1)$] The monomorphism category $\CS(\mmod \widehat{\La}/\phi)$ is of finite representation type.
		\item [$(2)$] The monomorphism category $\CS(\mmod \widehat{\La})$ is of finite representation type.
		\item [$(3)$] The stable Auslander algebra $\Gamma$ is representation-finite.
		\item [$(4)$] The bounded derived category $\mathbb{D}^{b}(\mmod A)$ is locally bounded.
		\item [$(5)$] The quiver $\Delta$ is of Dynkin type $\mathbb{A}_n$ with $n \leqslant 4.$
	\end{itemize}
\end{theorem}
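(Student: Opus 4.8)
The plan is to assemble the final theorem entirely from results already established earlier in the excerpt, so the proof is essentially a bookkeeping exercise that chains together the equivalences. First I would observe that by the classification of representation-finite selfinjective algebras recalled just before Lemma \ref{Lemma 7.1}, every such algebra has the form $\widehat{\La}/\phi$ with $\La$ a tilted algebra of Dynkin type $\Delta$ and $\phi$ an admissible auto-equivalence of $\widehat{\La}$; moreover $\Gamma$ is, by definition, the stable Auslander algebra of this selfinjective algebra. Thus the setup of the theorem is exactly the setup of Proposition \ref{proposition 7.2}, and the equivalences $(1)\Leftrightarrow(2)\Leftrightarrow(3)\Leftrightarrow(4)$ of the theorem are literally the equivalences $(2)\Leftrightarrow(1)\Leftrightarrow(3)\Leftrightarrow(4)$ of Proposition \ref{proposition 7.2} (with the trivial remark that ``$\CS(\mmod\widehat{\La})$ is locally bounded'' and ``$\CS(\mmod\widehat{\La})$ is of finite representation type'' coincide here, since $\widehat{\La}$ is locally representation-finite and $\CS(\mmod\widehat{\La})$ is therefore locally bounded precisely when the relevant Auslander--Reiten quiver has a finite component, which in the connected case forces global finiteness — this is the content behind Lemma \ref{Lemma 6.4} and Theorem \ref{Theorem 6.5}).

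Next I would bring in condition $(5)$. The content is: $\mathbb{D}^{\rm b}(\mmod A)$ is locally bounded if and only if $\Delta$ is of Dynkin type $\mathbb{A}_n$ with $n\leqslant 4$. The ``only if'' direction is handled by splitting into Dynkin families. For $\Delta$ of type $\mathbb{D}_n\,(n\geqslant4)$, $\mathbb{E}_6$, $\mathbb{E}_7$, $\mathbb{E}_8$, Proposition \ref{proposition 7.4} shows $\CS(k\Delta)$ is of infinite representation type and hence, via the chain of equivalences just invoked, $\mathbb{D}^{\rm b}(\mmod A)$ is not locally bounded. For $\Delta$ of type $\mathbb{A}_n$, the penultimate proposition of Section \ref{Section 7} — which identifies, via \cite{La}, the derived equivalence class of $A$ with that of the stable Auslander algebra of the linearly oriented $\mathbb{A}_n$, uses Remark \ref{Remark 7.6} to recognise $\Gamma^s_{k[x]/(x^{n})}=\mathbb{Z}A_{n-1}/\langle\tau\rangle$, and feeds this into Proposition \ref{proposition 7.2} together with the Ringel--Schmidmeier bound that $\CS(k[x]/(x^{m}))$ is representation-finite iff $m\leqslant5$ — gives exactly that $\mathbb{D}^{\rm b}(\mmod A)$ is locally bounded iff $n\leqslant4$. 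Combining these two cases yields $(4)\Leftrightarrow(5)$, which together with the earlier block completes the circle of equivalences.

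The main obstacle, such as it is, is not a genuine mathematical difficulty but rather making sure the many layers of reductions line up with no gap: one must check that the Galois functor $\widehat{\La}\rt\widehat{\La}/\phi$ with $G=\langle\phi\rangle\cong\mathbb{Z}$ really does satisfy the hypotheses of Theorem \ref{Theorem 6.5} (torsion-free $G$, free action on objects, finitely many orbits, $\mmod(\widehat{\La}/\phi)$ of finite representation type and connected), which is precisely what the opening sentence of the proof of Proposition \ref{proposition 7.2} asserts using \cite{G} or \cite{DLS}; and that Corollary \ref{Corollary 7.3} legitimises reducing to a single convenient orientation of each Dynkin diagram. Since the selfinjective algebra $\widehat{\La}/\phi$ is connected representation-finite, $\mmod(\widehat{\La}/\phi)$ is connected, so Lemma \ref{Lemma 6.4} applies and the ``locally bounded'' versus ``finite representation type'' distinction collapses on the $\CB$-side, which is why conditions $(1)$ and $(2)$ of the theorem can be phrased uniformly in terms of finite representation type.

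In summary, I would write: ``By the classification of representation-finite selfinjective algebras, we are in the situation of Proposition \ref{proposition 7.2}, whose proof establishes $(1)\Leftrightarrow(2)\Leftrightarrow(3)\Leftrightarrow(4)$ of the present theorem. It remains to prove $(4)\Leftrightarrow(5)$. If $\Delta$ is of type $\mathbb{D}$ or $\mathbb{E}$, Proposition \ref{proposition 7.4} and the already-established equivalences show $(4)$ fails and $(5)$ fails. If $\Delta$ is of type $\mathbb{A}_n$, the preceding proposition shows $\mathbb{D}^{\rm b}(\mmod A)$ is locally bounded iff $n\leqslant4$; combined with the previous sentence this gives $(4)\Leftrightarrow(5)$.'' No new estimates or constructions are needed; every ingredient is cited above.
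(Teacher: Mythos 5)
Your proposal is correct and follows essentially the same route as the paper: the paper's own proof is a one-line assembly of Proposition \ref{proposition 7.2} (giving $(1)\Leftrightarrow(2)\Leftrightarrow(3)\Leftrightarrow(4)$), Proposition \ref{proposition 7.4} (ruling out types $\mathbb{D}$ and $\mathbb{E}$ via the already-established equivalences), and the immediately preceding proposition on type $\mathbb{A}_n$ (giving $(4)\Leftrightarrow(5)$), which is exactly the chain you describe. Your added remark reconciling ``$\CS(\mmod \widehat{\La})$ of finite representation type'' in condition $(2)$ with ``locally bounded'' in Proposition \ref{proposition 7.2} is a sensible gloss of a terminological discrepancy the paper itself leaves implicit.
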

 
 \begin{corollary}\label{Corollary 7.8}
 Keep the notation used in Remark \ref{Remark 7.6}. Then the monomorphism category $\CS(\mmod \La(n, t))$ is of finite representation type if and 
 only if $t \leqslant 5.$
 \end{corollary}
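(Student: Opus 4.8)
The plan is to deduce the corollary directly from the main theorem of this section, using Remark~\ref{Remark 7.6} to read off the relevant Dynkin type. First I would note that $\La(n,t)=kC_n/J^t$ is a connected self-injective Nakayama algebra, hence in particular a connected representation-finite self-injective algebra; since $k$ is algebraically closed of characteristic different from $2$ (the standing assumption of this section), Riedtmann's classification recalled above applies, and we may write $\La(n,t)\simeq\widehat{\La}/\phi$ for some tilted algebra $\La$ of Dynkin type $\Delta$ and some admissible auto-equivalence $\phi$ of the repetitive category $\widehat{\La}$, where the underlying graph of $\Delta$ equals the Dynkin type $\Delta(\La(n,t))$ of $\La(n,t)$.

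Next I would identify this Dynkin type. By Remark~\ref{Remark 7.6}, $\Gamma^s_{\La(n,t)}=\mathbb{Z}A_{t-1}/\langle\tau^n\rangle$, so $\Delta(\La(n,t))=\mathbb{A}_{t-1}$; equivalently, the quiver $\Delta$ has underlying diagram $\mathbb{A}_{t-1}$. Applying the main theorem of this section to this $\La$ and $\phi$ (and, if desired, invoking Corollary~\ref{Corollary 7.3} to see that the answer depends only on the underlying diagram and not on the chosen orientation of $\Delta$), the monomorphism category $\CS(\mmod\widehat{\La}/\phi)=\CS(\mmod\La(n,t))$ is of finite representation type if and only if $\Delta$ is of Dynkin type $\mathbb{A}_m$ with $m\leqslant 4$. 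Since here $m=t-1$, this condition reads $t-1\leqslant 4$, i.e. $t\leqslant 5$, which is exactly the claim.

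I do not anticipate a genuine obstacle, as the corollary is merely a specialization of the main theorem; the only points needing care are that $\La(n,t)$ really falls within the scope of Riedtmann's classification (connectedness and self-injectivity of $kC_n/J^t$, together with the characteristic hypothesis on $k$), and that the invariant $\Delta(\La(n,t))$ extracted from $\Gamma^s_{\La(n,t)}$ in Remark~\ref{Remark 7.6} coincides with the $\Delta$ appearing in the classification and in the main theorem --- which is so by the very definition of the Dynkin type of a representation-finite self-injective algebra through $\Gamma^s_\La=\mathbb{Z}\Delta/G$. As a consistency check, the special case $n=1$ gives $\La(1,t)=k[x]/(x^t)$ with $\Gamma^s_{k[x]/(x^t)}=\mathbb{Z}A_{t-1}/\langle\tau\rangle$, and the resulting bound $t\leqslant 5$ recovers the Ringel--Schmidmeier theorem on $\CS(\mmod k[x]/(x^t))$.
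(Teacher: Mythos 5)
Your proposal is correct and follows essentially the same route as the paper: read off the Dynkin type $\mathbb{A}_{t-1}$ of $\La(n,t)$ from $\Gamma^s_{\La(n,t)}=\mathbb{Z}A_{t-1}/\langle\tau^n\rangle$ in Remark \ref{Remark 7.6}, then apply the main theorem of Section \ref{Section 7} to get the bound $t-1\leqslant 4$. Your extra care in checking that $\La(n,t)$ falls under Riedtmann's classification and your consistency check for $n=1$ are fine but not needed beyond what the paper's two-line argument already uses.
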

\begin{proof}
	We observe by Remark \ref{Remark 7.6} the type of the selfinjective Nakayam algebra $\La(n, t)$ is $\mathbb{A}_{t-1}$. Now the statement is a direct consequence of our main theorem.
\end{proof}
\section*{acknowledgment}
The authors would like to thank Dr. Mohsen Asgharzadeh for carefully reading the manuscript.
This research was supported in part by a grant from School of Mathematics,Institute for Research in Fundamental Sciences (IPM).  The second author also thank the Center of Excellence for Mathematics (University of  Isfahan).

\end{document}